\documentclass[12pt]{amsart}
\usepackage{amsmath}
\linespread{1.05}
\usepackage[scaled]{helvet}
\usepackage[T1]{fontenc}
\usepackage{textcomp}
\normalfont
\usepackage{amssymb}
\usepackage[usenames,dvipsnames]{color}

\usepackage{graphicx}
\usepackage[utf8]{inputenc}
\usepackage[english]{babel}
\usepackage{csquotes}
\usepackage{amssymb}
\usepackage{tikz-cd}

\usepackage{fullpage}
\usepackage{amsmath, amsfonts,hyperref,verbatim,comment}
\usepackage{amssymb}

\usepackage[normalem]{ulem}

\usepackage{todonotes, pictex}

\usepackage{enumerate}
\usepackage{enumitem}

\newtheorem{theorem}{Theorem}[section]
\newtheorem{lemma}[theorem]{Lemma}
\newtheorem{proposition}[theorem]{Proposition}
\newtheorem{corollary}[theorem]{Corollary}
\newtheorem{conjecture}[theorem]{Conjecture}

\newtheorem{definition}[theorem]{Definition}

\theoremstyle{remark}

\numberwithin{equation}{section}

\newcommand{\mc}{\mathcal}

\newcommand{\la}{\lambda}

\newcommand{\norm}[1]{\left\lVert #1 \right\rVert}
\newcommand{\inner}[1]{\left\langle #1 \right\rangle}

\newcommand{\R}{\mathbb{R}}

\DeclareMathOperator{\proj}{proj}

\newcommand{\holder}{H\"{o}lder }
\newcommand{\til}{\widetilde}

\newcommand{\cout}[1]{}
\DeclareMathOperator{\dvol}{\mu}
\definecolor{darkcyan}{rgb}{0. 0.65, 0.65}

\newcommand{\hrk}{rk^{h}}
\newcommand{\shrk}{rk^{sh}}

\newtheorem{Structural Stability Theorem}[theorem]{Structural Stability Theorem}

\def\bt{\begin{theorem}}
\def\et{\end{theorem}}
\def\bd{\begin{definition}}
\def\ed{\end{definition}}
\def\bl{\begin{lemma}}
\def\el{\end{lemma}}

\def\be#1\ee{\begin{align}\begin{split} #1 \end{split}\end{align}}
\def\beq#1\eeq{\begin{align*}\begin{split} #1 \end{split}\end{align*}}

\begin{document}

\title{Hyperbolic Rank Rigidity for Manifolds of   $\frac14$-Pinched Negative Curvature}

\author{Chris Connell$^\dagger$, Thang Nguyen, Ralf Spatzier$^{\ddagger }$}

\address{Department of Mathematics,
Indiana University, Bloomington, IN 47405}
\email{connell@indiana.edu}

\address{Courant Institute  of Mathematical Sciences, New York University, New York, NY 10012}
\email{tnguyen@nyu.edu}

\address{Department of Mathematics, University of Michigan, 
    Ann Arbor, MI, 48109.}
\email{spatzier@umich.edu}

\thanks{$^\dagger$ Supported in part by Simons Foundation grant \#210442}
\thanks{$^{\ddagger}$ Supported in part by NSF grants DMS 1307164 and DMS 1607260}

\subjclass[2010]{Primary 53C24; Secondary 53C20,37D40}

\date{}

\begin{abstract}
A Riemannian manifold $M$ has  higher  hyperbolic rank if every geodesic has a
perpendicular  Jacobi field  making sectional curvature -1  with the geodesic.
If in addition, the sectional curvatures of $M$  lie in the interval
$[-1,-\frac14]$, and $M$ is closed, we show that $M$ is a locally symmetric
space of rank one. This partially extends  work by Constantine using completely
different methods. It  is also a partial converse to Hamenst\"{a}dt's hyperbolic
rank rigidity result for sectional curvatures $\leq -1$, and complements
well-known results  on Euclidean and spherical rank rigidity.
  \end{abstract}

\maketitle

\section{Introduction}

 Given a closed Riemannian manifold $M$ and  a unit vector $v\in SM$, we define the {\em  hyperbolic rank }  $\hrk (v)$ of $v$ as the dimension of the subspace of $v^\perp\subset TM$ which are the initial vectors of a Jacobi field $J(t)$ along $g_tv$ which spans a plane of sectional curvature $-1$ with $g_tv$ for all $t\ge 0$ (where $J(t) \neq 0$).  
The {\em  hyperbolic rank } of $M$, $\hrk (M)$, then is the infimum of   $\hrk (v)$ over all  unit vectors $v$.  We also say that $M$ has {\em  higher hyperbolic rank} if $\hrk (M) >0$.  

Our notion of  hyperbolic rank is   a priori weaker than either the usual  one which requires that the Jacobi fields in question make curvature $-1$ for $t\in (- \infty, \infty)$ or else the version that uses parallel fields in place of Jacobi fields.  In strict negative curvature these distinct formulations turn out to coincide (see Corollary \ref{strong=weak}).
Actually, the techniques of our proofs require us to introduce the notion of hyperbolic rank for positive time. 

 The main goal of this paper is the following hyperbolic rank rigidity result.

\bt  \label{thm:main}
Let $M$ be  a closed Riemannian manifold of higher hyperbolic rank and sectional curvatures $K$ between $-1  \leq K \leq-\frac14$.  Then $M$ is a rank one locally symmetric space. In particular, if the pinching is strict then $M$ has constant curvature $-1$.
\et

Constantine \cite[Corollary 1]{Constantine-rank} characterized constant curvature manifolds among those of nonpositive curvature under one of two conditions: odd dimension without further curvature restrictions, or even dimension provided the sectional curvatures  are pinched between $-(.93)^2$ and $-1$. He also showed that if one uses the stronger notion of parallel fields in place of Jacobi fields then one may relax the lower curvature bound of $-1$, though still requiring the same pinching in even dimensions.  His method  is rather different from ours, drawing on ergodicity  results for the  2-frame flow of  such manifolds.
For   $\frac 14$-pinched manifolds of negative curvature however, ergodicity of the frame flow  has been   conjectured now for over 30 years, with no avenue for an approach in sight \cite[Conjecture 2.6]{Brin82}. 

 Both Constantine's and our result are  counterpoints to Hamenst\"{a}dt's hyperbolic rank rigidity theorem     \cite{Hamenstadt91}:   

\bt {\em (Hamenst\"{a}dt)} \label{thm:Hamenstadt}
Closed manifolds with   sectional curvatures $K \leq -1$   and higher hyperbolic rank are locally symmetric spaces of real rank 1.  
\et

 Compactness is truly essential in these results. Indeed, Connell found a counterexample amongst homogeneous manifolds of negative curvature whilst   proving hyperbolic rank rigidity for such spaces   under an additional condition \cite{Connell2}.
  
Lin and Schmidt recently  constructed non-compact manifolds of higher hyperbolic rank in \cite{Lin16} with both upper and lower curvature bounds $-1$ and curvatures  arbitrarily pinched.  In addition, their examples are not even locally homogeneous and every geodesic lies in a totally geodesic hyperbolic plane.
 In dimension three, Lin  showed that finite volume  manifolds with higher hyperbolic rank always have  constant curvature, without imposing any curvature properties \cite{Lin16}.

The notion of hyperbolic rank  is analogous to that of {\em strong Euclidean and spherical rank} where we are looking for parallel vector fields (not just Jacobi fields) along geodesics that make curvature 0 or 1 respectively.   When 0, 1 or -1 are also extremal as values of sectional curvature, various rigidity theorems have been proved.  In particular we have the results of Ballmann and Burns-Spatzier  in nonpositive curvature where  higher rank Euclidean manifolds are locally either  products or symmetric spaces
 (cf. \cite{Ballmann, BallmannBook, BS},  Eberlein and Heber \cite{ EH} for certain noncompact manifolds and Watkins \cite{Watkins} for no-focal points).
  When the sectional curvatures   are less than 1, and $M$ has  higher spherical rank,  Shankar, Spatzier and Wilking showed that $M$ is locally isometric to a compact rank one symmetric space  \cite{SSW}.   Notably, there are counterexamples in the form of the Berger metrics for the analogous statements replacing Jacobi fields by parallel fields in the definition of higher spherical rank (see \cite{SSW}).

  Thus the situation for closed manifolds is completely understood for upper curvature bounds, and we have full rigidity.   For lower curvature bounds, the situation is more complicated.  For one, there are  many closed manifolds of nonnegative curvature and higher Euclidean rank.  The first examples were given  by Heintze (private communication) and were still homogeneous.  More general and in particular inhomogeneous  examples were constructed by Spatzier and Strake in \cite{SpatzStrake}.  For higher spherical rank and lower bound on the sectional curvature by 1, Schmidt, Shankar and Spatzier again proved local isometry to a sphere of curvature 1 if the  spherical rank is at least $n -2 >0$, $n$ is odd or if $n \neq 2, 6$ and $M$ is a sphere \cite{SchShSP2016}.  No counterexamples are known.  If $M$ in addition is K\"{a}hler of dimension at least 4, then $M$ is locally isometric to complex projective space with the Fubini-Study metric.  In dimension 3, Bettiol and Schmidt showed that higher rank implies local splitting of the metric, without any conditions  on the curvature  \cite{Bettiol16}.

Let us outline our argument for Theorem \ref{thm:main} which occupies the remainder of this paper. In fact, all of our arguments hold for manifolds of with sectional curvature bounds $-1\leq K <0$ until Section \ref{sec:kanai}.  We show that we may   assume that every geodesic $c(t)$ has orthogonal parallel fields $E$ with sectional curvature $-1$.    The dimension of the latter vector space is called the  {\em strong hyperbolic rank} of $c$.  
Following Constantine in \cite[Section 5]{Constantine-rank}, strong rank agrees with the rank under lower sectional curvature bound $-1$ (cf. Proposition  \ref{prop:weakstrong}).
Then we show in Section \ref{sec:regular} that the regular set $\mc{R}$ of unit tangent vectors $v$ for which $\hrk (v) = \hrk (M)$ is dense  and open. Additionally it has the property that if $v \in \mc{R}$ is recurrent then its stable and unstable manifolds also belong to $\mc{R}$. 
Next in Section \ref{sec:smooth}, we show that the distribution of parallel fields of curvature $-1$ is smooth on the regular set.  Then, for bi-recurrent regular vectors,  we characterize these parallel fields in Section \ref{sec:lyapunov} in terms of unstable Jacobi fields of Lyapunov exponent 1. We use this to show that the slow unstable distribution extends to a smooth distribution on $\mc{R}$.

In Section \ref{sec:kanai}, we prove the result under the stronger assumption of strict $\frac14$-curvature pinching as the technicalities are significantly simpler and avoid the use of measurable normal forms from Pesin theory. We are inspired here by arguments of Butler in \cite{Butler15}.
  We construct a Kanai like connection for which the slow and fast stable and unstable distributions are parallel.  The construction is much motivated by a similar one by Benoist, Foulon and Labourie in \cite{BFL90}.  We use this to prove integrability of the slow unstable distribution.   This distribution is also invariant under stable holonomy by an argument of Feres and Katok \cite{FeresKatok1990}, and hence defines a distribution on $\partial \til{M}$.  As it is integrable and $\pi_1(M)$-invariant, we get a $\pi_1(M)$-invariant foliation on  $\partial \til{M}$ which is impossible thanks to an argument of Foulon \cite{Foulon1994} (or the argument for   Corollary 4.4 in \cite{Hamenstadt91}.) 
  
Lastly, in Section \ref{sec:non-strict} we treat the general case of non-strict $\frac14$-curvature pinching. By a result of Connell \cite{Connell03} relying on Theorem \ref{thm:Hamenstadt}, if $M$ is not already a locally symmetric space, then there is no uniform $2:1$ resonance in the Lyapunov spectrum. Now we can use  recent work of Melnick \cite{Melnick16} on normal forms to obtain a suitably invariant connection (cf. also Kalinin-Sadovskaya \cite{Kalinin13}). This allows us to prove integrability of the slow unstable distribution on almost every unstable manifold. As before we can obtain a $\pi_1$-invariant foliation on $\partial\til{M}$ and finish with the result of Foulon as before. This is technically more complicated,  however, because we no longer have $C^1$ holonomy maps. Instead we adapt an argument of Feres and Katok, to show that stable holonomy maps almost everywhere preserve the tangencies of our slow unstable foliation.  To this end, we show that the holonomy maps are differentiable with bounded derivatives, though not necessarily $C^1$, between good unstable manifolds. This allows us to obtain the desired holonomy invariance as in the strict $\frac14$-pinching case to finish the proof of the main theorem.

In light of the above, in particular  Theorem \ref{thm:main} as well as Constantine's results, we make the following
   
 \begin{conjecture} 
 A closed manifold with sectional curvatures $\geq -1$ and higher  hyperbolic rank is isometric to a locally symmetric space of real rank 1. 
 \end{conjecture}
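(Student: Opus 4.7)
The natural plan is to extend the proof of Theorem \ref{thm:main} by removing the upper curvature bound $K \leq -\frac14$. Sections 2--4 of the paper already apply for any $-1 \leq K < 0$: every geodesic admits orthogonal parallel fields of curvature $-1$, the regular set $\mathcal{R}$ is open and dense, and the parallel-field distribution is smooth on $\mathcal{R}$. So one would begin from a strong footing.

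The dynamical setup is equally favorable. Along a parallel field $E$ with $R(E,\dot c)\dot c = -E$, the Jacobi equation produces unstable Jacobi fields $e^{t}E(t)$ of Lyapunov exponent exactly $1$, while the lower bound $K \geq -1$ forces every positive Lyapunov exponent to lie in $(0,1]$. Thus the curvature $-1$ parallel fields span precisely the top Oseledets subspace of the unstable bundle. Following the remainder of the paper's outline, the goal is to show that this top subspace together with its slow complement defines a smooth, integrable, holonomy-invariant distribution, descending to a $\pi_1(M)$-invariant foliation of $\partial \til{M}$ that is excluded by the argument of Foulon \cite{Foulon1994}.

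The principal obstacle is the construction, in the style of Sections 5--6, of an invariant connection with sufficient regularity to parallelize the slow/fast decomposition. The $\frac14$-pinching enters precisely because it confines the Lyapunov spectrum of the geodesic flow to a ratio less than $2$, the resonance-free regime in which the normal forms of Melnick and Kalinin--Sadovskaya \cite{Melnick16, Kalinin13} produce smooth invariant structures. Removing the upper curvature bound permits arbitrary resonances $\lambda_i = \sum n_j \lambda_j$ in the Oseledets spectrum, and the normal-form machinery used in Section \ref{sec:non-strict} breaks down; in particular, the slow unstable distribution may fail to be even $C^1$, and there may be no natural splitting into just two Oseledets blocks.

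One plausible route is to first establish, from higher hyperbolic rank and $K \geq -1$ alone, an a priori upper bound on $K$ strict enough to reduce the conjecture to Theorem \ref{thm:main}. A possible mechanism is an integral Bochner-type identity exploiting the abundance of curvature $-1$ Jacobi fields along every geodesic, together with Liouville-measure integration over $SM$, to produce a global constraint on the Ricci tensor sharp enough to enforce the pinching. Alternatively, one could attempt to reconstruct the invariant connection directly from the distinguished family of curvature $-1$ totally geodesic planes, bypassing normal forms altogether; here the additional structure that closedness of $M$ and ergodicity of the geodesic flow provide might substitute for the Lyapunov-spectral control. I expect the upper-bound step to be the genuinely hard one, since it must capture the subtle way closedness rules out the Lin--Schmidt noncompact counterexamples \cite{Lin16}.
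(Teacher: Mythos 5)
This statement is labeled a \emph{conjecture} in the paper; the authors do not prove it, and your submission does not either. You have correctly recognized this: what you wrote is an analysis of where the known methods run out, not a proof. Grading it against ``the paper's proof'' is therefore not possible, since no such proof exists.

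That said, your diagnosis of the difficulty is accurate and matches the authors' own remarks near the end of the introduction. The material of Sections \ref{sec:regular}--\ref{sec:lyapunov} does indeed go through under only $-1 \le K < 0$, and the curvature $-1$ parallel fields do span the top Oseledets block with exponent exactly $1$. The genuine use of $\frac14$-pinching is, as you say, localized to Sections \ref{sec:kanai} and \ref{sec:non-strict}: it confines the positive Lyapunov spectrum to $(\frac12,1]$, which is what makes the generalized Kanai connection (respectively the Melnick/Kalinin--Sadovskaya normal forms) flat and torsion-free on the slow/fast blocks via Lemma \ref{lem:tensor}, and what gives $C^1$ regularity of the stable holonomy (Hirsch--Pugh, Hasselblatt). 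Drop the upper curvature bound and both ingredients fail: resonances $\lambda_i = \sum_j n_j\lambda_j$ can appear, the two-block slow/fast splitting is no longer forced, and holonomy maps are merely H\"older. Your two proposed workarounds (an a priori pinching bound from an integral identity, or a connection built directly from the curvature $-1$ planes) are speculative sketches with no concrete mechanism; in particular the first must somehow use compactness to exclude the Lin--Schmidt examples, which is precisely the part you flag as hard and leave open. In short: you correctly identified this as an open problem and correctly located the obstruction, but you have not closed it, and neither has the paper.
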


  Let us point out that the starting point of the proofs for upper and lower curvature bounds are radically different, although they share some common features.  
 In the hyperbolic rank case in particular, for the upper curvature bound, we get control of the slow unstable foliation in terms of parallel fields.  Hamenst\"{a}dt used the latter to create  Carnot metrics on the boundary with large conformal group leading to the models of the various hyperbolic spaces.  The lower curvature bound in comparison gives us control of the fast unstable distribution which is integrable and does not apparently tell us anything about the slow directions.  It is clear that the general case will be much more difficult, even if we assume that the metric has negative or at least non-positive curvature. 
    \vspace{1em}
    
     Finally let us note a consequence of Theorem \ref{thm:main} in terms of dynamics.    Consider the geodesic flow $g_t$ on the unit tangent bundle of a closed   manifold $M$.  For a geodesic $c \subset M$, the {\em maximal Lyapunov exponent} $\lambda  _{max} (c)$,   for $c$ is the biggest exponential growth rate of the norm of a Jacobi field $J(t)$ along $c$:
    $$  \lambda  _{max} (c) := \max _{J  \text{ Jacobi for } c} \lim \frac 1t \log \|J(t)\| .$$
Note that $ \lambda  _{max} (c) \leq 1$ if the sectional curvatures of $M$ are bounded below by $-1$,    by Rauch's comparison theorem.

Given   an ergodic $g_t$-invariant measure $\mu$ on the unit tangent bundle $SM$, $\lambda  _{max} (c)$ is constant $\mu$-a.e..  In fact, it is just the maximal Lyapunov exponent in the sense of dynamical systems for  $g_t$ and $\mu$ (cf. Section \ref{sec:lyapunov}).

\begin{corollary}\label{cor:measures}
Let $M$ be  a closed Riemannian manifold with sectional curvatures $K$ between $-1 \leq K \leq -\frac14$.  Let $\mu$ be a probability measure  of full support on the unit tangent bundle $SM$ which is invariant and ergodic under the geodesic flow $g_t$.  Suppose that the maximal Lyapunov exponent for $g_t$and $\mu$ is $1$.   Then $M$ is a rank one locally symmetric space.
\end{corollary}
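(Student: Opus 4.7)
The plan is to reduce this to Theorem \ref{thm:main} by showing that the hypotheses imply $M$ has higher hyperbolic rank. Under the bounds $-1 \leq K \leq -\frac{1}{4}$, Riccati comparison shows that the unstable second fundamental form $U^u(v) : v^\perp \to v^\perp$ is symmetric with eigenvalues in $[\tfrac{1}{2}, 1]$, so in particular the maximal Lyapunov exponent of the geodesic flow is at most $1$. By Oseledets' theorem and ergodicity of $\mu$, the hypothesis $\lambda_{\max} = 1$ produces, on a full $\mu$-measure $g_t$-invariant set, an invariant subbundle $\mc{L} \subset E^u$ of constant dimension $k \geq 1$ on which every Lyapunov exponent of the Jacobi cocycle equals $1$.

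Next, the sum of the Lyapunov exponents on $\mc{L}$ equals the ergodic average of the trace of $U^u$ restricted to $\mc{L}$, which by ergodicity equals $\int_{SM} \mathrm{tr}(U^u|_{\mc{L}})\, d\mu$. Since $U^u|_{\mc{L}(v)} \leq \id$ (every Rayleigh quotient of $U^u$ is $\leq 1$), we have $\mathrm{tr}(U^u|_{\mc{L}}) \leq k$ pointwise, and equality in the integral forces $U^u|_{\mc{L}(v)} = \id$ for $\mu$-a.e.\ $v$. This is a $g_t$-invariant condition, so $U^u|_{\mc{L}(g_t v)} = \id$ along the entire orbit. Hence for any $w \in \mc{L}(v)$, the unstable Jacobi field $J$ with $J(0) = w$ stays in $\mc{L}$ and satisfies $J'(t) = U^u(g_t v) J(t) = J(t)$, so $J(t) = e^t P_t w$ where $P_t$ denotes parallel transport along the geodesic. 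Substituting into $J'' + R(J, c')c' = 0$ gives $R(P_t w, c'(t))c'(t) = -P_t w$, i.e., $K(P_t w, c'(t)) = -1$ for all $t \geq 0$. Thus $\shrk(v) \geq 1$, and in particular $\hrk(v) \geq 1$, for $\mu$-a.e.\ $v$.

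Finally, since $\mu$ has full support, the set of such $v$ is dense in $SM$. By Proposition \ref{prop:weakstrong}, the condition $\hrk(v) \geq 1$ is equivalent to $\shrk(v) \geq 1$, and the existence of a parallel field of curvature $-1$ along $c_v$ is a closed condition in $v$ by continuous dependence of parallel transport and the curvature tensor on the base point. Therefore $\hrk(v) \geq 1$ holds for every $v \in SM$, so $M$ has higher hyperbolic rank, and Theorem \ref{thm:main} yields the conclusion. The main technical hurdle will be the pointwise identification $U^u|_{\mc{L}(v)} = \id$ from the ergodic/trace argument: one must relate the generator of the Jacobi cocycle on $\mc{L}$ to $U^u$ carefully and then upgrade the integrated equality to a pointwise identity using symmetry of $U^u$. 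Once this rigidity is in hand, the production of parallel fields of curvature $-1$ and the extension by density are routine.
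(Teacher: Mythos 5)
Your argument is correct and, like the paper's, reduces to Theorem~\ref{thm:main} by exhibiting higher hyperbolic rank, but it gets there by a genuinely different computation. The paper's proof is a one-liner citing Lemma~\ref{maxLyap}: on the full-$\mu$-measure, dense set of vectors returning with positive frequency to all neighborhoods, the Oseledets space of exponent $1$ coincides with $\mc{E}(v)$, so $\dim\mc{E}(v)>0$ there and, by Lemma~\ref{semicontinuity}, everywhere. That route is proved via a finite-time $\epsilon$-contraction on $\mc{E}(v)^\perp$ propagated by recurrence, and in particular requires $v\in\mc{R}$, so one must verify $\mu(\mc{R})=1$ (this follows since $\mc{R}$ is open, dense, and backward flow-invariant while $\mu$ is fully supported, invariant, and ergodic -- a point the paper leaves implicit). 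You instead work at the level of the generator of the unstable cocycle: the bound $U^u\le\id$ on the symmetric unstable Riccati solution is exactly the derivative estimate proved inside Lemma~\ref{BallCon-lem}, so you may cite it rather than re-derive it from Riccati comparison; the Birkhoff/trace identity for the sum of Lyapunov exponents on the top Oseledets block $\mc{L}$, together with equality-case rigidity of the Rayleigh quotient for a symmetric operator of norm $\le 1$, then forces $U^u\vert_{\mc{L}}=\id$ along $\mu$-a.e.\ orbit, which is equivalent to the equality case of Lemma~\ref{BallCon-lem} and yields parallel fields of curvature $-1$. Your final density-and-closedness step is precisely Lemma~\ref{semicontinuity} and should simply be cited. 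On balance your route is more self-contained and avoids the (unstated) $\mu(\mc{R})=1$ check; the paper's trades that self-containment for brevity by reusing a lemma it needs elsewhere. One small stylistic note: the lower eigenvalue bound $\tfrac12$ coming from the upper curvature bound $K\le -\tfrac14$ plays no role in your argument -- only $U^u\le\id$, i.e.\ $K\ge -1$, is used to force the rigidity.
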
 

We supply a proof in Section \ref{sec:non-strict}. In fact, the reduction to Theorem \ref{thm:main} is identical to  Constantine's  in \cite[Section 6]{Constantine-rank} which in turn adapts an argument of Connell for upper curvature bounds \cite{Connell03}.

\vspace{1em}  

\noindent {\em Acknowledgements:}   The third author thanks the Department of Mathematics at Indiana University for their hospitality while part of this work was completed.

\section{Definitions, Semicontinuity and Invariance on Stable Manifolds}  \label{sec:regular}

Let M be compact manifold of negative sectional curvature, and denote its unit tangent bundle by $SM$.   We let $g_t:  SM \to SM$ be the geodesic flow, and  denote by $pt: SM \to M $ the footpoint map, i.e. $v \in T_{pt (v) } M$. 
For $v \in SM$, let $c_v$ be the geodesic determined by $v$ and let $v^{\perp}$ denote the perpendicular complement of $v$ in $T_{pt (v) } M$. Recall that $\hrk (v)$ is the dimension of the subspace of $v^\perp$ which are the initial vectors of Jacobi fields that make curvature $-1$ with $g_tv$ for all $t\ge 0$, and $\hrk (M) $ is the minimum of $\hrk (v)$ for $v \in SM$.

\begin{lemma} \label{lem:onesided}
Let $v$ be a unit vector recurrent under the geodesic flow. Suppose that $\hrk(v) >0$.  Then there is also an unstable or stable Jacobi field making curvature -1 with $g_tv$ for all $t \in \R$.
\end{lemma}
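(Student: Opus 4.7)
The plan is to use the recurrence of $v$ together with a rescaling argument that extends the curvature $-1$ property to all of $\R$ and simultaneously isolates a stable or unstable limit. I would start from a nontrivial $J(0)\in v^\perp$ generating a Jacobi field $J$ along $c_v$ with $K(J(t),\dot c_v(t))=-1$ for every $t\ge 0$, and decompose $J=J^s+J^u$ into its stable and unstable summands, both well-defined perpendicular Jacobi fields along $c_v$ in strictly negative curvature.

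Using forward recurrence, choose $t_n\to+\infty$ with $u_n:=g_{t_n}v\to v$, and for each $n$ form the translated, renormalized Jacobi field along $c_{u_n}$
\[
  \tilde J_n(s)\;:=\;\frac{J(s+t_n)}{\|J(t_n)\|}.
\]
Then $\|\tilde J_n(0)\|=1$, and the uniform a priori bound on the stable/unstable Riccati solutions in strictly negative curvature forces $\|\tilde J_n'(0)\|$ to be bounded independently of $n$. After identifying $T_{pt(u_n)}M$ with $T_{pt(v)}M$ through a smooth local trivialization and passing to a subsequence, the pair $(\tilde J_n(0),\tilde J_n'(0))$ converges to some $(w_0,w_1)$ with $\|w_0\|=1$ and $w_0\perp\dot c_v(0)$. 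Continuous dependence of Jacobi fields on their initial data and coefficients (the latter coming from $c_{u_n}\to c_v$ in $C^\infty_{loc}$) then produces a limit Jacobi field $\tilde J$ along $c_v$ with $\tilde J_n\to\tilde J$ uniformly on compact subintervals. Because $\tilde J_n$ makes curvature $-1$ with $\dot c_{u_n}$ on $[-t_n,\infty)$, the limit $\tilde J$ makes curvature $-1$ with $\dot c_v$ for every $t\in\R$.

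It remains to check that $\tilde J$ is itself stable or unstable. Let $a>0$ satisfy $K\le -a^2$, available by compactness and $K<0$. If $J^u\neq 0$, then $\|J(t_n)\|\sim\|J^u(t_n)\|\to\infty$, the stable summand contributes nothing in the limit, and Rauch comparison for unstable fields yields $\|J(s+t_n)\|/\|J(t_n)\|\le Ce^{a s}$ for $s\le 0$; hence $\|\tilde J(s)\|\to 0$ as $s\to -\infty$, so $\tilde J$ is unstable. If $J^u=0$, then $J=J^s$ is already stable, and the analogous Rauch bound $\|J^s(s+t_n)\|\le Ce^{-as}\|J^s(t_n)\|$ for $s\ge 0$ forces $\tilde J$ to decay forward, so $\tilde J$ is a nontrivial stable Jacobi field. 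The main technical point will be the uniform control of $(\tilde J_n(0),\tilde J_n'(0))$ and the interchange of the curvature condition with the limit, but both reduce to standard Riccati/Rauch estimates in strictly negative curvature and involve no input beyond recurrence.
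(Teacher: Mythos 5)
Your proposal is correct and follows essentially the same argument as the paper: decompose $J$ into stable and unstable parts, translate by recurrence times $t_n$ and renormalize by $\|J(t_n)\|$, pass to a subsequential limit to get a Jacobi field with the curvature $-1$ property on all of $\R$, and observe that the limit is unstable when $J^u\not\equiv 0$ and stable when $J=J^s$. You supply a few more details than the paper does (the Riccati bound to extract the convergent subsequence, and the Rauch estimates to identify the limit as stable or unstable), but these simply flesh out steps the paper takes as understood.
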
 
\begin{proof}
Since $\hrk(v)>0$, there is a Jacobi field $J(t)$ making curvature -1 with $g_tv$ for all $t\ge  0$. First assume that $J(t)$ is not stable.
Decompose $J(t)$  into its stable and unstable components  $J(t) = J^s (t) + J^u (t)$.    Suppose $g_{t_n} v \rightarrow v$ with $t_n \rightarrow \infty$.  Then, for a suitable subsequence of $t_n$, 
$\frac{J(t + t_n)}{\| J(t_n) \|}$   will converge to a  Jacobi field $Y(t)$ along $c_v (t)$.   Note then  that $g_{t + t_n} (v)  \rightarrow g_t v$  as $t_n \rightarrow \infty$.  Moreover, for any $t \in \R$, $Y(t)$ is the limit of the vectors $J(t+ t_n)$ which make curvature -1 with $g_{t + t_n} (v)$.  Hence  $Y(t)$ also makes curvature -1 with $g_tv$ for any $t$.  Also $Y(t)$ is clearly unstable since $J^u(t)\not\equiv 0$.   

 If $J(t) = J^s (t)$ is stable, then the same procedure will produce  a stable Jacobi field $Y(t)$ along $c(t)$ that makes curvature $-1$ with $g_t (v)$ for all $t \in \R$. 
\end{proof}

\bl
Suppose that $\hrk (M) >0$.  Then along every geodesic $c(t)$, we have an unstable Jacobi field that makes curvature $-1$ with $c (t)$  for all $t \in \R$.  Similarly, there is a stable Jacobi field along $c(t)$  that makes curvature $-1$ with $c(t)$  for all $t \in \R$.
\el

\begin{proof}
Since the geodesic flow for $M$ preserves the Liouville measure $\dvol$, $\dvol $-a.e. unit tangent vector $v$ is recurrent.  By Lemma \ref{lem:onesided}, the geodesics $c_v (t)$ have  stable or unstable Jacobi fields along them that make curvature $-1$ with the geodesic for all $t \in \R$.  As $\dvol $ has full support in $SM$, such geodesics are dense and the same is true for  any geodesic by taking limits.

Next we show that there are both stable and unstable Jacobi fields along any geodesic  that make curvature $-1$ with the geodesic.  Indeed, let $A^+ \subset SM$ be the set of unit tangent vectors $v$ that have an unstable Jacobi field along $c_v (t)$ that make curvature $-1$ with $c_v (t)$. 
Similarly, define $A^- \subset SM$ as the set of unit tangent vectors $v$ that have a stable Jacobi field along $c_v (t)$ that make curvature $-1$ with $c_v (t)$. 
Note that $A ^- = - A^+$, and that $SM= A^+ \cup A^-$ by what we proved above.  Hence neither  $A^+$ nor $A^-$ can have  measure 0  w.r.t. Liouville measure $\dvol$. Also, both $A^+$ and $A^-$ are invariant under the geodesic flow $g_t$.  Since $g_t$ is ergodic w.r.t. $\dvol$, both  $A^+$ and $A^-$ must each have full measure.  Now the claim is clear once again by taking limits.
\end{proof} 

 Denote by $\Lambda(v,t)w$ the unstable Jacobi field along $g_tv$ with initial value $w\in v^\perp$. Then we let $\mc{E} (v) \subset v^{\perp}$ be the subspace of  $v^{\perp}$ defined as follows:  $w \in v^{\perp}$ belongs to $\mc{E} (v)$ if $\Lambda(v,t)w$ makes curvature -1 with $g_tv$ for all $t\ge 0$. 

 We define $\mc{R} = \{v \mid \hrk  (v) = \hrk  M\}$. We note that for $v\in \mc{R}$ and for all $u\in SM$, $\dim \mc{E}(v) \le \dim \mc{E}(u)$.

\bl   \label{lem:twosided}
Suppose $v \in \mc{R}$ and $ w \in \mc{E}  (v)$.  Then $\Lambda(v,t)w$  makes curvature  -1 with $c_v (t) $ for all $ t \in \R$ and $\mc{R}$ is invariant under the backward geodesic flow. 
\el

\begin{proof}
First note that for  $t\in \mathbb{R}$, the unstable Jacobi field $\Lambda(v,t): v^\perp\to (g_tv)^\perp$, defined by $w\mapsto \Lambda(v,t)w$ is an isomorphism. We have by definition that $\Lambda(v,t)\mc{E}(g_{-t}v)\subset \mc{E}(v)$ for $t>0$.  Since  $v\in \mc{R}$, we have $\dim \mc{E}(v) \le \dim \mc{E}(g_{-t}v)$. Thus  $\Lambda(v,t)\mc{E}(g_{-t}v)= \mc{E}(v)$ for $t>0$. Therefore, for $w\in \mc{E}(v)$, the Jacobi field $\Lambda(v,t)w$ along $c_v$ makes curvature -1 with $g_tv$ for all $t\in \mathbb R$. This immediately implies the last statement.
\end{proof}

 Next,   define $\widehat{\mc{E}} (v) \subset v^{\perp}$ be the subspace of  $v^{\perp}$ defined as follows:  $w \in v^{\perp}$ belongs to $\widehat{\mc{E}} (v)$ if  the parallel vector field along  $c_v (t)$  determined by w makes curvature -1 with $g_tv$ for all $t\in \R$. We have that  $\widehat{\mc{E}} (v)\subset \mc{E}(v)$.  Indeed  if $E (t)$ is a parallel vector field along a geodesic $c(t)$ that makes curvature -1 with $c(t)$, then $e^t E(t)$ is an unstable Jacobi field that again makes curvature -1 with $c(t)$.  

\begin{definition}  \label{def:stronghyperbolicrank}
The {\em strong hyperbolic rank} $\shrk (v)$ of $v$ is the dimension of $\widehat{\mc{E}} (v)$.  The {\em strong hyperbolic rank} $\shrk (M)$ of M is the minimum of  the strong hyperbolic ranks $\shrk (v)$ over all  $v\in SM$. 
\end{definition}

   We use an argument of  Constantine \cite[Section 5]{Constantine-rank} to prove:

\begin{proposition}  \label{prop:weakstrong}
If $M$ is a closed manifold with lower sectional curvature bound $-1$,   $v \in \mc{R}$ and $w \in \mc{E} (v)$,  then the parallel vector field determined by $w$ along $c_v (t)$ makes curvature $-1$ for all $t \in \R$. 
 Thus for all $v\in\mc{R}$, $\hrk(v)=\shrk(v)$ and $\widehat{\mc{E}} (v)={\mc{E}} (v)$ .
\end{proposition}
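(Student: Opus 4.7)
The plan is to show that any unstable Jacobi field $J(t) := \Lambda(v,t)w$ that makes sectional curvature $-1$ with the geodesic must be pointwise proportional to a parallel field, by pinning down its norm growth exactly. By Lemma~\ref{lem:twosided}, since $v \in \mc{R}$ and $w \in \mc{E}(v)$, $J$ makes curvature $-1$ with $\dot c_v$ for all $t \in \R$, so one may work on the entire real line.

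Let $u(t) := \|J(t)\|$, which is smooth and strictly positive by absence of conjugate points. Differentiating directly and using $J'' = -R(J,\dot c_v)\dot c_v$ together with $K(J,\dot c_v) \equiv -1$ gives
\[
u\,u'' = \|J'\|^2 - (u')^2 + u^2.
\]
Cauchy--Schwarz $(uu')^2 = \langle J, J'\rangle^2 \le u^2 \|J'\|^2$ yields $u'' \ge u$, with equality at a point iff $J'$ and $J$ are linearly dependent there. In the reverse direction, the hypothesis $K \ge -1$ feeds into a Rauch-type comparison for unstable fields (equivalently, into the bound $U \le \mathrm{Id}$ on the unstable Riccati operator), giving $u(t+s) \le u(t)\,e^s$ for all $s \ge 0$; differentiating at $s = 0$ produces $u' \le u$.

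Combining these, $(u-u')' = u' - u'' \le u' - u \le 0$, so $u-u'$ is nonincreasing in $t$. For an unstable Jacobi field, $u(t) \to 0$ as $t \to -\infty$, and since in negative curvature $0 < u' \le u$, also $u'(t) \to 0$. Therefore $u - u' \to 0$ at $-\infty$; a nonincreasing, nonnegative function with vanishing limit at $-\infty$ must vanish identically, hence $u' \equiv u$ and $u(t) = \|w\|\,e^t$. With $u'' = u$ the inequality $u'' \ge u$ is saturated, so equality holds throughout in Cauchy--Schwarz, forcing $J'(t) = \lambda(t) J(t)$ at every $t$. Writing $J = u E$ with $E$ a unit vector field perpendicular to $\dot c_v$, the combination $J' \parallel J$ with $\langle E, E'\rangle = 0$ forces $E' \equiv 0$, so $E$ is parallel. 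Then $E(t)$ spans the same plane as $J(t)$ with $\dot c_v(t)$, so $E$ makes sectional curvature $-1$ with $\dot c_v$ for all $t$. This places $w$ in $\widehat{\mc{E}}(v)$, proving $\mc{E}(v) \subset \widehat{\mc{E}}(v)$; the reverse inclusion was noted before the statement, and equality of dimensions gives $\hrk(v) = \shrk(v)$.

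The main technical step is the Rauch comparison $u' \le u$ extracted from $K \ge -1$; combined with the $-1$ curvature of the $J$-$\dot c_v$ plane that yields $u'' \ge u$, the two inequalities are rigid enough to pin down $u$ exactly, and the remainder is a routine equality-case analysis for Cauchy--Schwarz.
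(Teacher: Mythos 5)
Your argument is correct, and it is essentially the same approach as the paper's: the paper delegates the heart of the matter to a citation of Constantine \cite[Section 5]{Constantine-rank} (after first converting the unstable field to a stable field along $c_{-v}$), and what you have written out is precisely that comparison argument. You derive $u''\ge u$ from the fact that the $J$-$\dot c_v$ plane has curvature $-1$, derive $u'\le u$ from the global lower bound $K\ge -1$ via Rauch, and then use the unstable asymptotic $u,u'\to 0$ at $-\infty$ to force equality and hence parallelism; this is the content of the cited discussion (and also reappears later in the paper as Lemma~\ref{BallCon-lem}). So this is a correct unpacking rather than a different route.
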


\begin{proof}  By Lemma \ref{lem:twosided}, the unstable Jacobi field  $\Lambda (v,t) w$ makes curvature $-1$ with $c_v (t)$ for all $t \in \R$.   Then $\Lambda (v,t) w$ is a stable Jacobi field along $c_{-v} (t)$ still making curvature -1 with $c_{-v} (t)$.  Hence the discussion in \cite[Section 5]{Constantine-rank} shows that $\Lambda (v,t) w = e^t E$ where $E$ is parallel along $c_v (t)$ for all $t \in \R$.  Clearly, $E$ makes sectional curvature -1 with $c_v (t)$ as well. 
\end{proof}

   Note that $\mc{E}$ and $\widehat{\mc{E}} $ may not be continuous a priori.  However, $\mc{E}$ and  $\widehat{\mc{E}} $ are  semicontinuous in the following sense. 

\begin{lemma}  \label{semicontinuity}
If $v_n, v \in SM$ and $v_n \rightarrow v$ as $n \rightarrow \infty$, then 
\begin{enumerate}
\item $\lim _{n \rightarrow \infty} \mc{E} (v_n) \subset \mc{E} (v)$ and $\hrk (v) \geq \limsup_{n\to\infty} \hrk (v_n)$
\item $\lim _{n \rightarrow \infty} \widehat{\mc{E}} (v_n) \subset \widehat{\mc{E}} (v)$ and $rh^{sh} (v ) \geq \limsup_{n\to\infty} \shrk (v_n)$.
\end{enumerate}
Here $\lim_{n\to\infty} \mc{E}  (v_n)$ simply denotes the set of all possibly limit points of vectors in $\mc{E} (v_n)$, and similarly for $\widehat{\mc{E}}$.
\end{lemma}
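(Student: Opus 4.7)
The plan is to reduce both parts of the lemma to two continuity facts: (a) for each fixed $t$, the maps $\Lambda(\cdot, t): v^\perp \to (g_tv)^\perp$ (unstable Jacobi field map) and the parallel transport $P(\cdot,t): v^\perp \to (g_tv)^\perp$ depend continuously on $v\in SM$; and (b) the sectional curvature is a continuous function on the Grassmannian of tangent 2-planes. Granting these, one passes to the limit in the defining condition ``makes curvature $-1$ with $g_tv$ for all $t\ge 0$'' (resp.\ for all $t\in\R$).

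Concretely, for part (1), I would take $w\in\lim_{n\to\infty}\mc{E}(v_n)$, realised as $w=\lim w_{n_j}$ with $w_{n_j}\in\mc{E}(v_{n_j})\subset v_{n_j}^\perp$. Since $v_n\to v$, we have $w\in v^\perp$. By continuity of $\Lambda$, for each fixed $t\ge 0$,
\[
\Lambda(v_{n_j},t)\, w_{n_j} \;\longrightarrow\; \Lambda(v,t)\,w
\qquad \text{in } T_{pt(g_tv)}M.
\]
If $w\ne 0$, then $\Lambda(v,t)w\ne 0$ (because $\Lambda(v,t)$ is an isomorphism), so each plane $\mathrm{span}(\Lambda(v_{n_j},t)w_{n_j}, g_tv_{n_j})$ converges in the Grassmannian to $\mathrm{span}(\Lambda(v,t)w, g_tv)$; the sectional curvatures then pass to the limit, giving curvature $-1$ at time $t$. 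As $t\ge 0$ was arbitrary, $w\in\mc{E}(v)$. For the rank estimate, set $k=\limsup_n\hrk(v_n)$, pass to a subsequence with $\hrk(v_{n_j})\ge k$, choose orthonormal bases $\{w^1_{n_j},\dots,w^k_{n_j}\}$ of $k$-dimensional subspaces of $\mc{E}(v_{n_j})$, and extract convergent limits $w^i$. Orthonormality survives the limit, and each $w^i\in\mc{E}(v)$, so $\hrk(v)\ge k$.

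Part (2) is handled in exactly the same way with parallel transport in place of $\Lambda$: parallel transport along $c_{v_n}$ is continuous in $v_n$ and in the initial vector, and the condition $K(E_n(t), c_{v_n}'(t))=-1$ for all $t\in\R$ passes to the limit verbatim since the parallel field is never zero. The rank estimate for $\shrk$ follows by the same orthonormal-basis argument.

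The only step that requires a moment's thought is the continuous dependence of $\Lambda(v,t)$ on $v$ for fixed $t$. This is standard in strictly negative curvature: the unstable subbundle of $T(SM)$ under $g_t$ is continuous (in fact \holder), and identifying $v^\perp$ with the unstable fiber via the Sasaki splitting, the map $w\mapsto \Lambda(v,t)w$ is the restriction of $dg_t$ to this fiber followed by the analogous identification at $g_tv$; continuity of the splitting and smoothness of the flow give joint continuity in $(v,t)$. I do not expect any further difficulty; the argument is essentially a compactness-and-limit argument once this hyperbolic dynamics input is quoted.
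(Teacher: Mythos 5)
Your argument is correct and supplies exactly the details the paper suppresses: the authors dispose of this lemma with the single sentence ``These claims are clear,'' and the natural justification is precisely the one you give, namely joint continuity of the unstable Jacobi solution operator $\Lambda(\cdot,t)$ (equivalently, H\"older continuity of $E^u$ together with smoothness of $Dg_t$), continuity of parallel transport in the base point, and continuity of sectional curvature on the Grassmannian of $2$-planes, combined with the standard orthonormal-basis compactness argument for the dimension estimates. So your route and the paper's coincide.

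One small caveat worth being aware of, though it does not affect the verdict: in the introduction $\hrk(v)$ is defined in terms of \emph{arbitrary} Jacobi fields making curvature $-1$, while $\mc{E}(v)$ is defined only via \emph{unstable} ones, so a priori one has only $\dim\mc{E}(v)\le\hrk(v)$. Your rank estimate in part (1) tacitly replaces $\hrk(v_{n_j})$ by $\dim\mc{E}(v_{n_j})$ when you pick $k$-dimensional subspaces of $\mc{E}(v_{n_j})$. The paper does the same identification implicitly throughout Section 2 (e.g.\ in the sentence following the definition of $\mc{R}$ and in Proposition 3.6), so this is consistent with the authors' intended reading, but a fully self-contained version would either prove $\hrk=\dim\mc{E}$ or run the compactness argument directly on the subspace $\mathcal{J}(v)\subset v^\perp\times v^\perp$ of all $(J(0),J'(0))$ with $J$ making curvature $-1$ for $t\ge0$, noting that this subspace is also upper semicontinuous by your argument. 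Part (2) is unambiguous since $\shrk(v)=\dim\widehat{\mc{E}}(v)$ by definition.
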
 

\begin{proof} These claims are clear.  		\end{proof}

    We now define $\widehat{\mc{R}} = \{v \mid \shrk  (v) = \shrk  M\}$.

\begin{lemma}\label{open-dense-R}
 The sets $\mc{R}$  and $\widehat{\mc{R}}$ are both  open  with full measure and  hence dense. Moreover, $\widehat{\mc{R}}$ is invariant under the geodesic flow.
\end{lemma}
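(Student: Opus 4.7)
The plan is to extract openness from the upper semicontinuity of $\hrk$ and $\shrk$ established in Lemma \ref{semicontinuity}, verify the flow-invariance of $\widehat{\mc{R}}$ directly from the definition, deduce full measure from ergodicity of the geodesic flow, and finally derive density using the full support of Liouville measure.

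First, since both $\hrk$ and $\shrk$ take integer values in $\{0,1,\dots,\dim M - 1\}$ and are upper semicontinuous by Lemma \ref{semicontinuity}, for any integer $c$ the sets $\{v:\hrk(v)\le c\}$ and $\{v:\shrk(v)\le c\}$ are open. In particular, the minima $\hrk(M)$ and $\shrk(M)$ are attained, and choosing $c$ equal to the respective minimum exhibits $\mc{R}$ and $\widehat{\mc{R}}$ as open and nonempty. Since the Liouville measure $\dvol$ has full support in $SM$, both sets automatically have positive $\dvol$-measure.

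Next I would verify that $\widehat{\mc{R}}$ is $g_t$-invariant. A parallel vector field along a geodesic depends only on the underlying geodesic and not on the choice of base parameter, so $\shrk(g_sv)=\shrk(v)$ for every $v\in SM$ and $s\in\R$; hence $\widehat{\mc{R}}$ is preserved by $g_t$. Because the geodesic flow on a closed negatively curved manifold is ergodic with respect to $\dvol$ (a fact already invoked in the proof of Lemma \ref{lem:onesided}), any $g_t$-invariant set of positive measure has full measure, and therefore $\dvol(\widehat{\mc{R}})=1$.

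The argument for $\mc{R}$ is slightly more delicate because Lemma \ref{lem:twosided} yields only backward-invariance, $g_{-t}(\mc{R})\subset \mc{R}$ for $t\ge 0$, which is equivalent to the forward-invariance of the complement $SM\setminus\mc{R}$. Measure-preservation gives $\dvol(g_t(SM\setminus\mc{R}))=\dvol(SM\setminus\mc{R})$; combined with the inclusion $g_t(SM\setminus\mc{R})\subset SM\setminus\mc{R}$ this forces the two to agree modulo $\dvol$-null sets, so $SM\setminus\mc{R}$ is essentially $g_t$-invariant. Ergodicity then restricts $\dvol(SM\setminus\mc{R})$ to $\{0,1\}$, and since $\mc{R}$ is open and nonempty, hence of positive measure, we must have $\dvol(\mc{R})=1$.

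Finally, density of both sets is automatic: an open set of full $\dvol$-measure is dense, since its closed complement has empty interior (any open set contained in it would have zero measure, contradicting the full support of $\dvol$). I expect the only real subtlety to be the asymmetric invariance of $\mc{R}$, which prevents a direct appeal to ergodicity and requires the measure-preservation bookkeeping in the previous paragraph; everything else is a routine unpacking of semicontinuity and the Anosov property.
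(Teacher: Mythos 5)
Your proof is correct and follows essentially the same route as the paper: openness from the semicontinuity of Lemma \ref{semicontinuity}, flow invariance of $\widehat{\mc{R}}$ directly from the definition, one-sided invariance of $\mc{R}$ from Lemma \ref{lem:twosided}, and ergodicity plus full support of Liouville measure to get full measure and density. The only difference is that you spell out the measure-preservation bookkeeping needed to pass from backward-invariance of $\mc{R}$ to essential $g_t$-invariance, a step the paper leaves implicit.
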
 
\begin{proof}
By Lemma \ref{semicontinuity}, $\mc{R}$ is open.  Since the geodesic flow is ergodic on SM w.r.t. Liouville measure  and $\mc{R}$ is invariant under backward geodesic flow by Lemma \ref{lem:twosided}, $\mc{R}$ has full measure.  By Lemma \ref{semicontinuity}, $\widehat{\mc{R}}$ is open and it is flow invariant by definition. Therefore the same argument applies. \end{proof}

\begin{corollary}\label{strong=weak}
If $M$ is a closed manifold with lower sectional curvature bound $-1$, then $\hrk (M) = \shrk (M)$ and $\mc{R}\subset \widehat{\mc{R}}$.
\end{corollary}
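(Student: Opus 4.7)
The plan is to combine Proposition \ref{prop:weakstrong} with the semicontinuity of $\shrk$ and the density of $\mc{R}$ from Lemma \ref{open-dense-R}. The obvious inequality $\widehat{\mc{E}}(v)\subset \mc{E}(v)$ gives $\shrk(v)\le \hrk(v)$ for every $v$, so in particular $\shrk(M)\le \hrk(M)$; the content of the corollary is the reverse inequality, which Proposition \ref{prop:weakstrong} already essentially delivers on the regular set $\mc{R}$.

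First, for any $v\in\mc{R}$, Proposition \ref{prop:weakstrong} states $\widehat{\mc{E}}(v)=\mc{E}(v)$, and hence $\shrk(v)=\hrk(v)=\hrk(M)$. In particular $\shrk(v)$ takes the value $\hrk(M)$ on the open dense set $\mc{R}$, so $\shrk(M)\le \hrk(M)$.

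For the opposite inequality, I would pick an arbitrary $u\in SM$ and approximate it by a sequence $v_n\in\mc{R}$ with $v_n\to u$, possible since $\mc{R}$ is dense by Lemma \ref{open-dense-R}. By the previous paragraph, $\shrk(v_n)=\hrk(M)$ for all $n$. Applying part (2) of Lemma \ref{semicontinuity} yields
\[
\shrk(u)\;\ge\;\limsup_{n\to\infty}\shrk(v_n)\;=\;\hrk(M).
\]
Since $u$ was arbitrary, $\shrk(M)\ge\hrk(M)$, and combined with the easy direction we conclude $\shrk(M)=\hrk(M)$.

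Finally, to see $\mc{R}\subset\widehat{\mc{R}}$, observe that for any $v\in\mc{R}$ we have shown $\shrk(v)=\hrk(M)=\shrk(M)$, which is precisely the defining condition for membership in $\widehat{\mc{R}}$. No step here looks genuinely difficult: the nontrivial content has been packaged into Proposition \ref{prop:weakstrong} (which needs the lower curvature bound $-1$) and the semicontinuity/density lemmas, and the main thing to be careful about is not to confuse the roles of $\shrk$ and $\hrk$ when invoking Lemma \ref{semicontinuity}(2).
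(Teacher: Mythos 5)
Your proof is correct and takes essentially the same route as the paper: agree the two ranks on $\mc{R}$ via Proposition \ref{prop:weakstrong}, then use density of $\mc{R}$ together with Lemma \ref{semicontinuity}(2) to push the equality $\shrk = \hrk(M)$ out to all of $SM$ as a lower bound on $\shrk$. The paper states this more tersely (``both weak and strong ranks can only go up outside $\mc{R}$''), but your write-up just fills in the same steps, including the observation that $\widehat{\mc{E}}\subset\mc{E}$ gives the easy inequality $\shrk(M)\le\hrk(M)$.
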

\begin{proof}
By Lemma \ref{prop:weakstrong}, strong and weak rank agree on $\mc{R}$  which is an open dense set by Lemma  \ref{open-dense-R}.  By Lemma \ref{semicontinuity}, both weak and strong ranks can only go up outside $\mc{R}$. \end{proof}

   The next argument is well known and occurs in Constantine's work for example.  

   As usual we let $W^u (v)$ denote the (strong) unstable manifold of $v$ under the geodesic flow, i.e. the vectors $w \in SM$ such that $d(g_t (v), g_t (w) )\rightarrow 0$ as $ t \rightarrow  - \infty$. We define the (strong) stable manifold $W^s(v)$ similarly for $t \rightarrow \infty$.

\begin{lemma}\label{recurrent}
If $v\in \widehat{\mc{R}}$ is backward recurrent under $g_t$, then $W^u(v)\subset \widehat{\mc{R}}$. If $v\in \widehat{\mc{R}}$ is forward recurrent under $g_t$, then $W^s(v)\subset \widehat{\mc{R}}$.
\end{lemma}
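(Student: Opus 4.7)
The plan is to combine three ingredients: the flow invariance of $\shrk$ from Lemma \ref{open-dense-R}, the upper semicontinuity of $\shrk$ from Lemma \ref{semicontinuity}, and the defining contraction property of (un)stable manifolds.

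First I observe that $\shrk$ is constant along every $g_t$-orbit: if $E$ is a parallel field along $c_v$ making curvature $-1$ with $\dot c_v$ on all of $\R$, then the restriction of $E$ to $c_{g_s v}$ is still parallel and still makes curvature $-1$, so $\widehat{\mc{E}}(g_s v)$ is identified with $\widehat{\mc{E}}(v)$ by parallel transport and in particular $\shrk(g_s v) = \shrk(v)$. This is exactly what makes $\widehat{\mc{R}}$ flow invariant.

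Now suppose $v \in \widehat{\mc{R}}$ is backward recurrent and pick $w \in W^u(v)$. Choose a sequence $t_n \to \infty$ with $g_{-t_n} v \to v$. By the definition of the unstable manifold, $d(g_{-t_n} w, g_{-t_n} v) \to 0$ as $n \to \infty$, so $g_{-t_n} w \to v$ as well. Applying part (2) of Lemma \ref{semicontinuity} to the sequence $v_n := g_{-t_n} w$ gives
\[
\shrk(v) \;\geq\; \limsup_{n \to \infty} \shrk(g_{-t_n} w) \;=\; \shrk(w),
\]
where the last equality uses the flow invariance of $\shrk$ noted above. Since $v \in \widehat{\mc{R}}$, the left-hand side equals $\shrk(M)$, while $\shrk(w) \geq \shrk(M)$ by definition of the minimum. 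Hence $\shrk(w) = \shrk(M)$, i.e.\ $w \in \widehat{\mc{R}}$, proving $W^u(v) \subset \widehat{\mc{R}}$.

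The stable case is entirely analogous: if $v$ is forward recurrent with $g_{t_n} v \to v$ for some $t_n \to \infty$ and $w \in W^s(v)$, then $d(g_{t_n} w, g_{t_n} v) \to 0$ forces $g_{t_n} w \to v$, and the same semicontinuity plus flow-invariance argument yields $\shrk(w) = \shrk(M)$. There is no real obstacle here; the only point that needs to be handled with care is matching the direction of recurrence with the direction in which the (un)stable manifold contracts, which is what dictates the split into the backward-recurrent/$W^u$ and forward-recurrent/$W^s$ statements.
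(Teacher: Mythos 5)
Your proof is correct and essentially the same as the paper's: the paper sends $g_{-t_n}w$ into an open neighborhood of $v$ inside $\widehat{\mc{R}}$ and then invokes flow invariance of $\widehat{\mc{R}}$, while you apply the semicontinuity of $\shrk$ (Lemma \ref{semicontinuity}) directly to the sequence $g_{-t_n}w \to v$ together with constancy of $\shrk$ along orbits. Since openness of $\widehat{\mc{R}}$ in Lemma \ref{open-dense-R} is itself deduced from that very semicontinuity, the two arguments coincide under the hood.
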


\begin{proof} Let $w\in W^u(v)$, then $g_{-t}w$ approximates $g_{-t}v$ when $t$ large. On the other hand, since $\widehat{\mc{R}}$ is open, there is a neighborhood $U$ of $v$ in $\widehat{\mc{R}}$. Since $v$ is backward recurrent, $g_{-t}v$ comes back to $U$ and approximates $v$ infinitely often. Thus there is $t$ large that $g_{-t}w\in U\subset \widehat{\mc{R}}$. It follows that $w\in \widehat{\mc{R}}$ as $\widehat{\mc{R}}$ is invariant under the geodesic flow (cf. Lemma \ref{open-dense-R}). 
The argument for the forward recurrent case and stable leaf is similar.\end{proof}

\section{Smoothness of Hyperbolic Rank}   \label{sec:smooth}

   Assume now that M has sectional curvature -1 as an extremal value, that is, either the sectional curvature $K \leq -1$ or $K \geq -1$.  We want to prove smoothness of $\widehat{\mc{E}}$ on the  regular set $\widehat{\mc{R}}$. 
Our arguments below are inspired by Ballmann, Brin and Eberlein's work \cite{Ballmann} and also \cite{Watkins}.
First let us recall a lemma from \cite[Lemma 2.1]{SchShSP2016}:  

\begin{lemma}\label{Jacobi operator} 
For $v \in S_p M$,  the {\em Jacobi operator} $R_v : v^{\perp} \to v^{\perp}$ is defined by $R_v (w) = R(v,w)v$.  
Then w is an eigenvector of $R_v$ with eigenvalue -1  if and only if $K(v,w) = -1$.
\end{lemma}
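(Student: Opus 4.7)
The plan is to prove the two directions separately and to exploit the self-adjointness of $R_v$ together with the extremal hypothesis on $-1$ that is standing in this section.

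The forward direction is immediate from the definitions. If $R_v w = -w$ for $w \in v^\perp \setminus \{0\}$, then since $v$ is a unit vector with $\langle v,w\rangle = 0$, the sectional curvature formula gives
$$K(v,w) = \frac{\langle R(v,w)v, w\rangle}{|w|^2} = \frac{\langle R_v w, w\rangle}{|w|^2} = \frac{-|w|^2}{|w|^2} = -1.$$

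For the reverse direction, the key point is that $R_v$ is a self-adjoint endomorphism of $v^\perp$: the identity $\langle R(v,w)v, w'\rangle = \langle R(v,w')v, w\rangle$ follows from the standard symmetry $R(X,Y,Z,W) = R(Z,W,X,Y)$ of the curvature tensor. Hence I can diagonalize $R_v$ with an orthonormal eigenbasis $\{e_1, \dots, e_{n-1}\}$ of $v^\perp$ and real eigenvalues $\lambda_i$. Observing that $\lambda_i = \langle R_v e_i, e_i\rangle = K(v, e_i)$ identifies the eigenvalues with sectional curvatures.

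Now suppose $K(v,w) = -1$ for some nonzero $w \in v^\perp$, which we may normalize to $|w|=1$. Write $w = \sum c_i e_i$. Then
$$-1 = K(v,w) = \langle R_v w, w\rangle = \sum_i c_i^2 \lambda_i, \qquad \sum_i c_i^2 = 1,$$
so $\sum_i c_i^2 (\lambda_i + 1) = 0$. Under the standing assumption that $-1$ is an extremal value of $K$, every eigenvalue $\lambda_i = K(v, e_i)$ has the same sign of $\lambda_i + 1$ (all $\geq 0$ if $K \geq -1$, or all $\leq 0$ if $K \leq -1$). Each term $c_i^2(\lambda_i+1)$ therefore has the same sign, forcing $c_i^2(\lambda_i+1) = 0$ for every $i$. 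Consequently only those $c_i$ with $\lambda_i = -1$ can be nonzero, and $w$ lies in the $(-1)$-eigenspace of $R_v$, i.e.\ $R_v w = -w$.

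There is essentially no obstacle here: the only place extra care is needed is in invoking the extremal hypothesis, since without it the identity $\sum c_i^2(\lambda_i+1)=0$ permits cancellation and $w$ need not be an eigenvector.
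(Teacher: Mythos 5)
Your proof is correct, and it is the natural argument: the forward direction is immediate from the sectional curvature formula, and the reverse direction uses self-adjointness of $R_v$, diagonalization, and the standing hypothesis from Section~3 that $-1$ is an extremal value of the sectional curvature (without which, as you rightly observe, the implication fails — a convex combination of eigenvalues straddling $-1$ can equal $-1$ without $w$ being an eigenvector). Note that the paper itself does not prove this lemma; it simply cites it as Lemma~2.1 of the Schmidt--Shankar--Spatzier paper, so there is no internal proof to compare against. Your argument is precisely what one would expect that cited proof to be, and your explicit identification $\lambda_i = \langle R_v e_i, e_i\rangle = K(v,e_i)$ is the one step worth stating carefully (it depends on the curvature sign convention matching the definition $R_v(w) = R(v,w)v$ used in the statement, which you have implicitly and correctly taken to be the paper's).
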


   While we don't use it, let us mention \cite[Lemma 2.9]{SchShSP2016} where smoothness of the eigenspace distribution of eigenvalue -1 is proved on a similarly defined regular set.   Our situation is different as we characterize hyperbolic rank in terms of parallel transport of a vector not just the vector.  To this end, we define the following quadratic form:  Let $E(t)$ and $W(t) $ be  parallel fields along the geodesic $c_v (t)$, and set 

$$\Omega _v ^T  (E(t), W(t)) = \int _{-T} ^T \langle - E(t)  - R_{g_t v} E(t), - W(t)  - R_{g_t v} W(t) \rangle.$$

\begin{lemma} 
The parallel field $E(t)$ belongs to the kernel of $\Omega _v ^T$ if and only if $ E(t)$ makes curvature -1 with $c_v (t)$ for $t \in [-T,T]$.  In consequence, if $S<T$, then $\ker \Omega _v ^T \subset \ker \Omega _v ^S$.
\end{lemma}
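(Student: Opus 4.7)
The plan is to recognize that the quadratic form $\Omega_v^T$ is positive semi-definite on parallel fields and then identify its kernel with its isotropic cone.

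First I would observe that setting $E = W$ gives
\[
\Omega_v^T(E(t), E(t)) \;=\; \int_{-T}^{T} \bigl\lVert E(t) + R_{g_t v} E(t) \bigr\rVert^{2}\, dt \;\ge\; 0,
\]
so $\Omega_v^T$ is a symmetric positive semi-definite form on the finite-dimensional space of parallel fields along $c_v$. Consequently, its kernel coincides with the set of parallel fields $E$ for which $\Omega_v^T(E,E) = 0$, by the standard Cauchy--Schwarz argument for PSD forms.

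Next, since $E$ is parallel and the curvature tensor is smooth, the integrand $\lVert E(t) + R_{g_t v} E(t)\rVert^2$ is a continuous nonnegative function of $t$. Its integral over $[-T,T]$ vanishes if and only if it is identically zero on $[-T,T]$, i.e. $R_{g_t v} E(t) = -E(t)$ for every $t \in [-T,T]$. This means $E(t)$ is an eigenvector of the Jacobi operator $R_{g_t v}$ with eigenvalue $-1$, and by Lemma \ref{Jacobi operator} this is equivalent to $K(g_t v, E(t)) = -1$, i.e. $E(t)$ makes sectional curvature $-1$ with $c_v(t)$ throughout $[-T,T]$.

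For the monotonicity statement, if $S < T$ and $E \in \ker \Omega_v^T$, then by the characterization just proved $E(t)$ makes curvature $-1$ with $c_v(t)$ on the larger interval $[-T,T]$, hence a fortiori on $[-S,S]$, so $E \in \ker \Omega_v^S$. The only step requiring any real care is the passage from vanishing integral to pointwise vanishing, which is routine given continuity of the integrand, so I expect no serious obstacle here; the lemma is essentially a direct consequence of Lemma \ref{Jacobi operator} combined with the positivity of the form.
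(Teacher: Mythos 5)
Your proof is correct and, once unpacked, is essentially the same as the paper's: both directions reduce to plugging in $W=E$, observing the integrand $\bigl\lVert E(t)+R_{g_tv}E(t)\bigr\rVert^2$ is continuous and nonnegative, so its integral vanishes exactly when it vanishes pointwise, and then invoking Lemma \ref{Jacobi operator}. Your repackaging via ``PSD form, so kernel $=$ null cone by Cauchy--Schwarz'' is a slightly more formal way of organizing the same computation the paper performs directly, so there is no substantive difference in method.
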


\begin{proof} If $ E(t)$ makes curvature -1 with $c_v (t)$ for $t \in [-T,T]$, then $- E(t)  - R_{g_t v} E(t) = 0$ by Lemma \ref{Jacobi operator}, and hence $E(t)$ is in the kernel of $\Omega _v ^T$.  

    Conversely, if $E(t)$ is in the kernel of $\Omega _v ^T$, let $W(t)= E(t)$.  Since the integrand now is $\ge 0$ for all    $t \in [-T,T]$, $ E(t)  - R_{g_t v} E(t)=0$ and hence $E(t)$ makes curvature -1 with  $c_v(t)$, as claimed.    \end{proof}
\vspace{.5em}

   Hence $\widehat{\mc{E}}(v) $ consists of the initial vectors of $ \cap _T \ker \Omega _v ^T$ which is the intersection of the descending set of vector subspaces $\ker \Omega _v ^T$ as T increases.  Hence  there is a smallest number $T(v) < \infty$ such that $\widehat{\mc{E}} (v) $ consist of the initial vectors of  $  \ker \Omega _v ^T$ for all $T> T(v)$.

\begin{proposition}
 $\widehat{\mc{E}}$ is smooth on $\widehat{\mc{R}}$.   In particular,  $\widehat{\mc{E}}$ is smooth on $W^s(v)$ (resp. $W^u(v)$ ) where $v \in \widehat{\mc{R}}$ is forward (resp. backward) recurrent.
\end{proposition}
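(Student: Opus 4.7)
The plan is to exploit the quadratic form description established just above the proposition: $\widehat{\mc{E}}(v)$ consists of the initial vectors of parallel fields in $\ker \Omega_v^T$ whenever $T > T(v)$, and for each fixed $T$ the map $v \mapsto \Omega_v^T$ is smooth. Smoothness is immediate because $\Omega_v^T$ is obtained by integrating over the compact interval $[-T,T]$ an expression involving the curvature tensor (smooth) and parallel transport $P_t : v^\perp \to (g_tv)^\perp$ (smooth in $v$); thus $\Omega^T$ is a smooth section of the bundle of symmetric bilinear forms on the perpendicular bundle $v^\perp \to SM$.

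Fix $v_0 \in \widehat{\mc{R}}$ and choose any $T > T(v_0)$, so that $\dim \ker \Omega_{v_0}^T = \shrk(v_0) = \shrk(M)$. Since the rank of a continuously varying symmetric matrix is lower semicontinuous, $u \mapsto \dim \ker \Omega_u^T$ is upper semicontinuous; hence there is a neighborhood $U$ of $v_0$ on which $\dim \ker \Omega_u^T \leq \shrk(M)$. Because $\widehat{\mc{R}}$ is open by Lemma \ref{open-dense-R}, we may further shrink $U$ so that $U \subset \widehat{\mc{R}}$. On $U$ one has $\widehat{\mc{E}}(u) \subset \ker \Omega_u^T$ (parallel fields of curvature $-1$ for all $t$ are such on $[-T,T]$ a fortiori), together with $\dim \widehat{\mc{E}}(u) = \shrk(M)$. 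Comparing dimensions forces $\ker \Omega_u^T = \widehat{\mc{E}}(u)$ throughout $U$, with locally constant rank.

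A smoothly varying family of symmetric bilinear forms on a smooth vector bundle whose kernel has locally constant rank defines a smooth kernel subbundle; one sees this by trivializing locally and applying the constant rank theorem to $\Omega^T_u$ viewed as a linear map $v^\perp \to (v^\perp)^*$. Therefore $\widehat{\mc{E}}$ is a smooth distribution on $\widehat{\mc{R}}$. The second assertion is then immediate from Lemma \ref{recurrent}: if $v \in \widehat{\mc{R}}$ is forward (respectively backward) recurrent, then $W^s(v) \subset \widehat{\mc{R}}$ (respectively $W^u(v) \subset \widehat{\mc{R}}$), so the restriction of $\widehat{\mc{E}}$ to these leaves inherits smoothness.

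The main technical hurdle is passing from the pointwise stabilization $T > T(v_0)$ to uniform-in-$u$ control on $\ker \Omega_u^T$: a priori $T(u)$ could blow up as $u \to v_0$. The argument sidesteps tracking $T(u)$ directly by using upper semicontinuity of $\dim \ker \Omega_u^T$ at fixed $T$, combined with the a priori lower bound $\dim \widehat{\mc{E}}(u) \geq \shrk(M)$ enforced by membership in $\widehat{\mc{R}}$, to squeeze the two quantities together.
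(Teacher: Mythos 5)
Your proof is correct and takes essentially the same approach as the paper: both rely on the smoothly varying family $\Omega^T$, both establish that on a neighborhood the kernel of a single $\Omega^T$ coincides with $\widehat{\mc{E}}$ and has locally constant rank $\shrk(M)$, and both then invoke smoothness of the kernel of a constant-rank family. Where the paper bounds $T(u) < T(v)+1$ locally via a compactness/contradiction argument (taking limits of orthonormal parallel fields), you reach the same conclusion by invoking upper semicontinuity of $\dim\ker\Omega_u^T$ directly — a slightly cleaner packaging of the same idea that avoids quibbles about strict versus non-strict inequalities in the definition of $T(\cdot)$.
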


\begin{proof} Let $v \in \widehat{\mc{R}}$, and let $v_n \rightarrow v$.  We may assume that  $v_n \in \widehat{\mc{R}}$ since $\widehat{\mc{R}}$ is open.  Note that $T(v_n) < T(v) +1$ for all large enough $n$.  Otherwise, we could find $\hrk M +1$ many orthonormal parallel fields along $c_{v_n}$ which make curvature -1 with  $c_{v_n} (t)$ for $- T(v) - 1 < t < T(v)  + 1$.   Taking limits, we find $\hrk M +1$ many orthonormal parallel fields along $c_v$ which make curvature -1 with  $c_{v_n} (t)$ for $- T(v) - 1 < t < T(v)  + 1$. Therefore there exists a neighborhood $U\subset \widehat{\mc{R}}$ of $v$ such that $T(u)<T(v)+1$ for all $u\in U$. Since the quadratic forms $\Omega _w ^{T(v)+1}$ are smooth on the neighborhood $U$ of $v$, we see that the distribution is smooth on $\widehat{\mc{R}}$. 

The last claim is immediate from smoothness on $\widehat{\mc{R}}$ and Lemma \ref{recurrent}.
\end{proof}

\section{Maximal Lyapunov exponents and hyperbolic rank}      \label{sec:lyapunov}

   The geodesic flow $g_t: SM \to SM$ preserves the Liouville measure $\dvol$ on $SM$, and is ergodic.  Hence  Lyapunov exponents are defined and constant almost everywhere w.r.t. $\dvol$.  We recall that they measure the exponential growth rate of tangent vectors to $SM$ under the derivative of $g_t$.  As is well-known, double tangent vectors to M correspond in a 1-1 way with Jacobi fields $J(t)$,  essentially since  $J(t)$ is uniquely determined by the initial condition $J(0), J'(0)$.   Moreover we have
$$ ( D g_t) (J(0), J'(0))  =  (J(t), J'(t)). $$
Thus we can work with Jacobi fields rather than double tangent vectors whenever convenient.  We note that stable (resp. unstable) vectors for $g_t$ correspond to Jacobi fields which tend to 0 as $t \rightarrow \infty$  (resp. as 
$t \rightarrow -\infty$).

   If $-1\leq K \leq 0$, then all Lyapunov exponents of unstable Jacobi field along the geodesic flow for any invariant measure are between 0 and 1, cf. e.g. \cite[ch. IV,Prop. 2.9]{BallmannBook}.  
Similarly, if $K \leq -1$, all Lyapunov exponents have absolute value at least 1.   We want to understand the extremal case better.  We suppose $K \geq -1$ throughout.

\begin{lemma}\label{ortho-invar}
Let $\widehat{\mc{E}}(v)^\perp$ be the orthocomplement (with respect to the Riemannian metric on $M$) of $\widehat{\mc{E}}(v)$. Then $\Lambda(v,t)$ sends $\widehat{\mc{E}}(v)^\perp$ to $\widehat{\mc{E}}(g_tv)^\perp$.
\end{lemma}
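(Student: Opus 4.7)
The plan is to exploit the unstable Riccati operator $U(v):v^\perp\to v^\perp$, defined so that any unstable Jacobi field $J(t)$ satisfies $J'(t)=U(g_tv)J(t)$. I would first recall that $U(v)$ is symmetric: for two unstable Jacobi fields $J_1,J_2$, the Wronskian $\langle J_1',J_2\rangle-\langle J_1,J_2'\rangle$ is constant and vanishes at $-\infty$, so it is identically zero, giving $\langle U(v)J_1(0),J_2(0)\rangle=\langle J_1(0),U(v)J_2(0)\rangle$.

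Next, I would identify $\widehat{\mc{E}}(v)$ inside the $1$-eigenspace of $U(v)$. If $w\in\widehat{\mc{E}}(v)$ and $E(t)$ is the parallel field along $c_v$ with $E(0)=w$, then $e^tE(t)$ is a perpendicular Jacobi field (one checks $(e^tE)''=e^tE=-R_{g_tv}(e^tE)$ since $E$ makes curvature $-1$ with $c_v$), and it is unstable because it decays as $t\to-\infty$. Hence $\Lambda(v,t)w=e^tE(t)$, and differentiating at $t=0$ gives $U(v)w=w$. The same reasoning going backward and forward in time shows that parallel transport along $c_v$ restricts to a linear bijection $\widehat{\mc{E}}(v)\to\widehat{\mc{E}}(g_tv)$, since the condition of making curvature $-1$ for all time is translation invariant.

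Now I would prove the lemma directly. Fix $w\in\widehat{\mc{E}}(v)^\perp$ and set $J(t)=\Lambda(v,t)w$. Pick any $u\in\widehat{\mc{E}}(g_tv)$; by the previous paragraph there is $e\in\widehat{\mc{E}}(v)$ whose parallel transport $E(s)$ along $c_v$ satisfies $E(t)=u$, and $E(s)\in\widehat{\mc{E}}(g_sv)$ for every $s$. Then
\begin{equation*}
\tfrac{d}{ds}\langle J(s),E(s)\rangle=\langle U(g_sv)J(s),E(s)\rangle=\langle J(s),U(g_sv)E(s)\rangle=\langle J(s),E(s)\rangle,
\end{equation*}
using that $E$ is parallel, the symmetry of $U(g_sv)$, and $U(g_sv)E(s)=E(s)$. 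With initial condition $\langle J(0),E(0)\rangle=\langle w,e\rangle=0$, the ODE forces $\langle J(s),E(s)\rangle\equiv0$, and in particular $\langle\Lambda(v,t)w,u\rangle=0$. Since $u\in\widehat{\mc{E}}(g_tv)$ was arbitrary, $\Lambda(v,t)w\in\widehat{\mc{E}}(g_tv)^\perp$.

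The main subtlety I anticipate is the bookkeeping in step two: one must be careful that an element of $\widehat{\mc{E}}(g_tv)$ really does arise as the parallel transport of an element of $\widehat{\mc{E}}(v)$ (so that the orthogonality computation applies to every $u$), which uses that $\widehat{\mc{E}}$ is defined by a condition on \emph{all} $t\in\R$ rather than just $t\ge 0$. Apart from that, the computation is a standard symmetry-of-Riccati argument, and no pinching or regularity hypotheses on $v$ are needed.
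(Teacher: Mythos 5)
Your proof is correct, and it takes a genuinely different route from the paper's. The paper's argument works directly with the Jacobi equation in a parallel orthonormal frame adapted to $\widehat{\mc{E}}$: writing $\Lambda(v,t)w=\sum_i f_i(t)E_i(t)$, it shows that the curvature coefficients $a_{ij}=\langle R(g_tv,E_i)g_tv,E_j\rangle$ equal $-\delta_i^j$ whenever $i$ or $j$ indexes a direction in $\widehat{\mc{E}}$ (using the symmetry $a_{ij}=a_{ji}$), so that for those indices the second-order ODE decouples to $f_i''=f_i$; the boundary conditions $f_i(-\infty)=0$ and $f_i(0)=0$ then force $f_i\equiv0$. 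Your version encapsulates the same symmetry input (ultimately the symmetry of the curvature tensor, via the Wronskian) in the statement that the unstable Riccati operator $U$ is self-adjoint, observes that $\widehat{\mc{E}}$ sits in the $1$-eigenspace of $U$, and then derives a single first-order scalar ODE $\frac{d}{ds}\langle J(s),E(s)\rangle=\langle J(s),E(s)\rangle$ with zero initial data. The two approaches buy essentially the same thing, but yours is somewhat more structural and avoids having to analyze the second-order system component by component, at the mild cost of having established the existence, continuity, and symmetry of the unstable Riccati solution (all standard in negative curvature). Your handling of the translation-invariance of $\widehat{\mc{E}}$ — needed to realize every $u\in\widehat{\mc{E}}(g_tv)$ as a parallel transport of some $e\in\widehat{\mc{E}}(v)$ — is also correct and is the same fact the paper uses implicitly when it chooses its adapted parallel frame along the whole geodesic.
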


\begin{proof} 
Indeed,  let $E_1(t),\dots, E_{n-1}(t)$ be  a choice of parallel orthonormal fields along $g_t v$ and perpendicular to $g_tv$  such that $\{E_1(t),\dots, E_k(t)\}$ forms a basis of  $ \widehat{\mc{E}}(g_tv)$. For any $w\in v^\perp$, the formula for an unstable Jacobi field becomes
\[\Lambda(v,t)w=\sum_i f_i(t)E_i(t).\]
Setting $a_{ij}=\inner{R(g_tv, E_i(t))g_tv,E_j(t)}$, the Jacobi equation is equivalent to 
\[f''_j(t)+\sum_i a_{ij}(t)f_i(t)=0.\]
Since  $e^t E_i(t)$ is an unstable Jacobi field for $i \leq k$ and the $\{E_i(t)\}$ are orthonormal,
\[
\inner{ R(g_t v, E_i (t) )g_t v, E_j (t)}=-\inner{E_i(t),E_j(t)} = -\delta^j_i,
\]
for all $i\leq k$ and any $j\leq n-1$. By the symmetries of the curvature tensor, $a_{ij}=a_{ji}$ and so 
we also have $a_{ji}(t)=a_{ij}(t)=-\delta_i^j$ for either $i\le k$ or $j\le k$. It follows that for all $t\in \R$ and all $i\le k$
\[
0=f_i''(t)+\sum_j a_{ij}(t)f_j(t)=f_i''(t)-f_i(t).
\]
Since $\Lambda(v,t)w$ is unstable, $\lim\limits_{t\to -\infty}f_i(t)=0$ for all $i$. If $w\in\widehat{\mc{E}}(v)^\perp$, then $f_i(0)=0$ for all $i\leq k$. These two conditions together imply $f_i(t)=0$ for all $t\in \R$ and $i\leq k$. Hence, $\Lambda(v,t)$ leaves $ \widehat{\mc{E}}^\perp$ invariant.
\end{proof}

\begin{lemma}\cite[ch. IV,Prop. 2.9]{BallmannBook}  \cite[Lemma 2.3]{Connell03}\label{BallCon-lem}
$\norm{\Lambda(v,t)w}\leq \norm{w}e^{t}$ for all $t\geq 0$. The equality holds at a time $T\in \R$ if and only if the sectional curvature of the plane spanned by $\Lambda(v,-t)w$ and $g_{t}v$ is -1 for all $0\leq t\leq T$ if and only if $\Lambda(v,t)w = \norm{w} e^t W(t)$ where $W(t)$ is parallel for all $0\leq t\leq T$.
\end{lemma}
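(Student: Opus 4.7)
The plan is to derive the norm bound via a Riccati comparison for the unstable Jacobi structure, and then extract the rigid geometric conclusions in the equality case from the tightness of that comparison.

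Write $J(t):=\Lambda(v,t)w$, which is an unstable Jacobi field along $c_v$. There is a self-adjoint unstable Riccati operator $U(t):(g_tv)^\perp \to (g_tv)^\perp$ such that $J'(t)=U(t)J(t)$ for every unstable Jacobi field, satisfying the Riccati equation $U'+U^2+R_{g_tv}=0$. The crux is the operator inequality $U(t)\leq I$ for every $t$, obtained as the Riccati comparison with the constant solution $U_0\equiv I$ of the model in constant curvature $-1$. Since $K\geq -1$ is equivalent to $R_{g_tv}\geq -I$ on $(g_tv)^\perp$, if the top eigenvalue of $U(t_0)$ exceeded $1$ somewhere, a standard scalar Riccati comparison would drive the ODE for that eigenvalue to blow up in finite backward time, contradicting the global existence of $U$ along the complete geodesic.

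Given $U\leq I$, the bound $\|J(t)\|\leq \|w\|e^{t}$ for $t\geq 0$ is immediate from
\[
\frac{d}{dt}\log\|J(t)\| \;=\; \bigl\langle U(t)\,\widehat{J}(t),\, \widehat{J}(t)\bigr\rangle \;\leq\; 1, \qquad \widehat{J}(t):=J(t)/\|J(t)\|,
\]
by integration on $[0,t]$. If equality holds at some $T>0$, then the inequality must be tight for every $t\in[0,T]$; since $U\leq I$, this pins $\widehat{J}(t)$ in the $1$-eigenspace of $U(t)$, so $J'(t)=U(t)J(t)=J(t)$ throughout $[0,T]$. Writing $J(t)=e^{t}W(t)$, the identity $J'=J$ becomes $W'(t)=0$, so $W$ is a parallel field with $\|W\|\equiv\|w\|$. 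Substituting back into the Jacobi equation $J''+R_{g_tv}J=0$ with $J''=J$ gives $R_{g_tv}J=-J$, and Lemma \ref{Jacobi operator} translates this as the statement that the sectional curvature of the plane spanned by $g_tv$ and $J(t)$ is $-1$ on $[0,T]$, closing the chain of implications.

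For the reverse implication from the curvature condition to the parallel-field form, $R_{g_tv}J=-J$ on $[0,T]$ reduces the Jacobi equation to $J''=J$ there. Decomposing the resulting solution as $J(t)=e^{t}a+e^{-t}b$ in parallel fields and using the unstable asymptotics of $J$ together with the Riccati bound $U\leq I$ (which controls $U(0)$) rules out the $e^{-t}$ contribution, recovering $J=e^{t}W$ with $W$ parallel on $[0,T]$. The main technical obstacle is the Riccati comparison producing $U\leq I$; all remaining implications are elementary calculus or direct computations with the Jacobi equation.
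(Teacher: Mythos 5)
Your main argument (norm bound and the implication ``equality $\Rightarrow$ parallel field of curvature $-1$'') is correct, but takes the Riccati viewpoint where the paper argues directly from Rauch's theorem. The paper invokes Rauch twice to get both $\norm{\Lambda(v,t)w}\le e^t\norm{w}$ and $\norm{\Lambda'(v,t)w}\le\norm{\Lambda(v,t)w}$, then uses a semigroup argument to propagate equality at $T$ down to all of $[0,T]$, and finishes by differentiating the unit field $W(t)=e^{-t}\Lambda(v,t)w/\norm w$ and using the second Rauch inequality to force $W'=0$. Your route packages both inequalities into the single operator bound $U\le I$ on the unstable Riccati solution, and pins $\widehat J$ to the top eigenspace of $U$ in the equality case. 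These are two faces of the same comparison theorem---the second Rauch bound is precisely $\|U\|\le 1$ for the self-adjoint Riccati operator---so the content is the same, though your version carries the overhead of first establishing $U\le I$ by the backward-blowup argument, which the paper simply quotes as Rauch. Both approaches cleanly give (a) $\Rightarrow$ (c) $\Rightarrow$ (b) and (c) $\Rightarrow$ (a).

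Where your proposal has a genuine gap is the final paragraph, the attempted implication from the curvature condition (b) back to the parallel form (c). You write $J(t)=e^t A+e^{-t}B$ with $A,B$ parallel and claim the unstable asymptotics of $J$ together with $U(0)\le I$ rule out the $e^{-t}$-term. But this decomposition is valid \emph{only} on $[0,T]$, where $J''=J$ holds; for $t<0$ the curvature in the direction of $J$ need not be $-1$, so $J$ does not have this form there. The asymptotic condition $J(t)\to 0$ as $t\to-\infty$ therefore places no constraint on $A$ and $B$. Indeed, a scalar model makes the obstruction explicit: if the curvature in a fixed parallel direction $E$ equals $-\tfrac14$ for $t<0$ and $-1$ for $t\in[0,T]$, the unstable solution in that direction has $u(0)=\tfrac12\ne 1$, so $J=(3e^t+e^{-t})bE$ on $[0,T]$ and $\norm{J(T)}<e^T\norm{J(0)}$ strictly even though the plane spanned by $J$ and $\dot c$ has curvature $-1$ throughout $[0,T]$. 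The bound $U(0)\le I$ alone does not force $U(0)$ to act as the identity on the $(-1)$-eigenspace of $R_v$. (The paper itself only proves (a) $\Rightarrow$ (c) $\Rightarrow$ (b) and these are the directions used later, so your gap tracks an over-statement in the lemma rather than something essential to the paper, but as a proof of the ``if and only if'' claim it is incomplete.)
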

\begin{proof}

   By the Rauch Comparison Theorem, $\norm{\Lambda(v,t)w}\leq \norm{w}e^{t}$ and $\norm{ \Lambda ' (v,t)w}\leq \norm{ \Lambda(v,t)w}$ for all $t\geq 0$ (cf. \cite[ch. IV, Prop. 2.9]{BallmannBook} which states a similar result for  stable Jacobi fields).   If equality holds at time $T>0$ then $\norm{\Lambda(v,t)w}=e^t\norm{w}$ for all $0\le t\le T$. Indeed, should $\norm{\Lambda(v,t_0)w }  <  e^{t_0}\norm{w}$ for some $0<t_0 <T$, then we get a contradiction since 
$$\norm{\Lambda(v,T)w }  \leq \norm{\Lambda(\Lambda(v,t_0)w , T-t_0)}  \leq e^{T-t_0} \norm{\Lambda(v,t_0)w }< e^{T-t_0}  e^{t_0}\norm{w} = e^T \norm{w}.$$

   Therefore the vector field $W(t)$ for which $\Lambda(v,t)w=  \norm{w}e^{t}W(t)$ is a field of norm $1$. Hence $\inner{W(t),W'(t)}=0$ and we have
\[
\norm{w}e^t (1+ \norm{W' (t) } ^2) ^{1/2} =  \norm{w} \norm{(e^t W(t) + e^t W' (t))}= \norm{\Lambda '(v,t)w }  \leq  \norm{\Lambda(v,t)w } = e^t \norm{w},
\]
by the estimate above on the derivative of the unstable Jacobi field.   We see that $W'=0$, i.e. $W$ is parallel as desired.
That the sectional curvature between $W(t)$ and the geodesic is -1 now follows from the Jacobi equation.
\end{proof}

   By covering the unit tangent bundle with countable base of open sets that generate the topology, and applying the ergodic theorem to the Liouville measure, there is a full measure set of unit tangent vectors that comes back to all its neighborhoods with positive frequency.

   The argument in the next lemma is similar to that of Lemma 3.4 of \cite{Ballmann85} 
and Proposition 1 of \cite{Hamenstadt91a}, but for the setting of a lower curvature bound of $-1$.

\begin{lemma}   \label{maxLyap}
Suppose $v\in\mc{R}$ returns with positive frequency to all its neighborhoods under $g_t$. Then for $w\in  {\mc{E}}(v)^\perp=\widehat{\mc{E}}(v)^\perp$, the unstable Jacobi field $\Lambda(v,t)w$ has Lyapunov exponent strictly smaller than 1. We have a similar statement for stable Jacobi fields of Lyapunov exponent -1.  
\end{lemma}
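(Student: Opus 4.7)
The plan is to argue by contradiction, promoting a hypothetical Lyapunov exponent equal to $1$ into a nonzero vector lying in both $\mc{E}(v)$ and $\mc{E}(v)^\perp$. I focus on the unstable case; the stable statement will follow by applying the unstable result to $-v$, using the time-reversal identification of stable Jacobi fields along $c_v$ with unstable Jacobi fields along $c_{-v}$, and noting that $-v\in\mc{R}$ inherits positive-frequency recurrence from $v$.

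So suppose $w\in\mc{E}(v)^\perp$ is a unit vector with $\lim_{t\to\infty}\tfrac{1}{t}\log\|\Lambda(v,t)w\|=1$, and set $U(t):=\|\Lambda(v,t)w\|e^{-t}$. Combining Lemma \ref{BallCon-lem} (applied at $g_tv$) with the cocycle identity $\Lambda(v,t+s)w=\Lambda(g_tv,s)\Lambda(v,t)w$ shows that $U$ is non-increasing on $[0,\infty)$ with $U(0)=1$, and the Lyapunov hypothesis then makes $\psi(t):=-\log U(t)$ non-negative, non-decreasing, with $\psi(t)/t\to 0$.

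The key step, and the main obstacle, is to extract times $t_n\to\infty$ that simultaneously (i) satisfy $g_{t_n}v\to v$, and (ii) obey $U(t_n+s)/U(t_n)\to 1$ uniformly on each compact interval $[0,a]$. If $U$ has a positive limit at infinity, (ii) is immediate from monotonicity; in general, a Fubini-type estimate
\[
\int_0^T\big(\psi(t+a)-\psi(t)\big)\,dt\;\le\;a\,\psi(T+a)\;=\;o(T)
\]
shows that $\{t\ge 0:\psi(t+a)-\psi(t)\ge\epsilon\}$ has upper density zero for every $\epsilon,a>0$. Intersecting with the positive-density recurrent sets $\{t:d(g_tv,v)<\delta\}$ and diagonalizing over $\epsilon$, $a$, and $\delta$ then produces the desired $t_n$.

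With such $t_n$ in hand, the remainder is a limiting argument. The unit vectors $\hat w_n:=\Lambda(v,t_n)w/\|\Lambda(v,t_n)w\|$ lie in $\widehat{\mc{E}}(g_{t_n}v)^\perp$ by Lemma \ref{ortho-invar}, and for $n$ large, $g_{t_n}v\in\mc{R}$ so this coincides with $\mc{E}(g_{t_n}v)^\perp$. Since $\mc{R}$ is open (Lemma \ref{open-dense-R}) and $\dim\mc{E}$ is constant on $\mc{R}$, the upper semicontinuity in Lemma \ref{semicontinuity} becomes honest continuity at $v$, so a subsequential limit $\bar w$ of $\hat w_n$ is a unit vector in $\mc{E}(v)^\perp$. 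The Jacobi fields $J_n(s):=\Lambda(g_{t_n}v,s)\hat w_n$ converge on compact intervals to $\Lambda(v,s)\bar w$ by smooth dependence on initial data, while (ii) gives $\|J_n(s)\|=e^s\,U(t_n+s)/U(t_n)\to e^s$. Therefore $\|\Lambda(v,s)\bar w\|=e^s$ for every $s\ge 0$, and Lemma \ref{BallCon-lem} places $\bar w$ in $\mc{E}(v)$, contradicting $\bar w\in\mc{E}(v)^\perp\setminus\{0\}$.
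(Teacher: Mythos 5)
Your argument is correct in outline and takes a genuinely different route from the paper's. The paper's proof is a direct quantitative estimate: by compactness and continuity at $v\in\mc{R}$, for some fixed $T>0$ there is an $\epsilon>0$ and a neighborhood $U\subset\mc{R}$ of $v$ on which every $w\in\mc{E}(u)^\perp$ satisfies $\|\Lambda(u,T)w\|\leq(1-\epsilon)e^T\|w\|$; since $g_tv$ visits $U$ with positive lower density, it accumulates the factor $(1-\epsilon)$ linearly often, and one directly reads off $\|\Lambda(v,S)w\|\leq e^S(1-\epsilon)^{\lfloor\delta S/T\rfloor}\|w\|$, hence a uniform gap $\limsup_S\frac1S\log\|\Lambda(v,S)w\|\leq 1+\tfrac{\delta}{T}\log(1-\epsilon)<1$. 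You instead argue by contradiction: assuming the exponent equals $1$, you set $\psi(t)=t-\log\|\Lambda(v,t)w\|$, use the Fubini estimate $\int_0^T(\psi(t+a)-\psi(t))\,dt\leq a\psi(T+a)=o(T)$ to show the ``slow-variation'' times have full density, intersect with the positive-frequency recurrence times, rescale by $\hat w_n=\Lambda(v,t_n)w/\|\Lambda(v,t_n)w\|$, and pass to a limit $\bar w\in\mc{E}(v)^\perp$ whose Jacobi field has exact $e^s$-growth, forcing $\bar w\in\mc{E}(v)$ by Lemma \ref{BallCon-lem}. Both work; the paper's is more explicit and immediately yields the ultimately-used quantitative gap, while yours is softer and isolates a reusable slow-variation mechanism from the recurrence hypothesis.

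One small thing to tighten: since no Oseledets regularity is assumed at $v$, the Lyapunov exponent here should be read as a $\limsup$, so your contradiction hypothesis should be $\limsup\frac1t\log\|\Lambda(v,t)w\|=1$, i.e.\ $\liminf\psi(t)/t=0$ rather than $\psi(t)/t\to 0$. The fix is immediate: run the Fubini bound along a subsequence $T_k$ realizing $\psi(T_k)/T_k\to 0$, which still gives $\int_0^{T_k}(\psi(t+a)-\psi(t))\,dt=o(T_k)$, and intersect with the positive-density recurrence set along those $T_k$. Also, for the continuity of $\mc{E}$ at $v$, your ``constant dimension plus upper semicontinuity implies Grassmannian continuity'' observation is fine; alternatively you can invoke directly the smoothness of $\widehat{\mc{E}}$ on $\widehat{\mc{R}}$ from Section \ref{sec:smooth} together with $\mc{E}=\widehat{\mc{E}}$ on $\mc{R}$ (Proposition \ref{prop:weakstrong}).
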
 

\begin{proof}
Let $T>0$ be such that the dimension of parallel vector fields making curvature -1 with $g_tv$ for all $0\le t\le T$ is $k=\hrk(M)$, i.e. $k=\dim  {\mc{E}}(v)$ since $v\in\mc{R}$ (\ref{strong=weak}). 

    Pick $w_0\in  {\mc{E}}(v)^\perp$ that minimizes $\{\frac{\norm{w}}{\norm{\Lambda(v,T)w}}:w\in   {\mc{E}}(v)^\perp\}$. By Lemma \ref{BallCon-lem}, we have that $\norm{\Lambda(v,T)w_0}\le e^T\norm{w_0}$. Suppose that we have  the equality $\norm{\Lambda(v,T)w_0}= e^T\norm{w_0}$. Then by Lemma \ref{BallCon-lem}, the parallel field of $w_0$ along $g_t v$ makes curvature -1 with $g_tv$ for all $0\le t\le T$. Since $w_0\in  {\mc{E}}(v)^\perp$,  the space of parallel fields  making curvature -1 with $g_tv$ for all $0\le t\le T$ has dimension at least $\dim  {\mc{E}}(v) +1=k+1$, a contradiction. Therefore $\norm{\Lambda(v,T)w_0}< e^T\norm{w_0}$.

   Let $\epsilon >0$ be such that $\norm{\Lambda(v,T)w_0}= (1-2\epsilon)e^T\norm{w_0}$. By continuity, we can choose a neighborhood $U\subset \mc{R}$ of $v$ such that for all $u\in U$ and $w\in  {\mc{E}}(u)^\perp$, we have the estimate $\norm{\Lambda(v,T)w}\le (1-\epsilon)e^T\norm{w}$.

   Since $g_tv$ visit $U$ with a positive frequency, there are $\delta>0$ and $T_0>0$ such that for all $S>T_0$
\[|\{t\in[0,S]:g_{t}v\in U\}|>\delta S.\]

   Now suppose that $w\in v^\perp\cap  {\mc{E}}(v)^\perp$. We note that by Lemma \ref{ortho-invar},  since ${\mc{E}}(v)^\perp=\widehat{\mc{E}}(v)^\perp$ on $\mc{R}$, $\Lambda(v,S)w\in  {\mc{E}}(g_Sv)^\perp$ for all $S>0$. Then $\norm{\Lambda(v,S)w} \le e^{S}(1-\epsilon)^{[\frac{\delta S}{T}]}\norm{w}$. It follows that $\Lambda(v,t)w$ has Lyapunov exponent strictly smaller than 1.
\end{proof}

     We remark that the argument in the last proof only provides information that the unstable Jacobi fields come from parallel fields in forward time.  This forced us to introduce both sets ${\mc{R}}$ and $\widehat{\mc{R}}$ and use the equality of ${\mc{E}}$ and $\widehat{\mc{E}}$ on ${\mc{R}}$.

   Recall that  there is a  contact form $\theta$ on $SM$ invariant under the geodesic flow. Its exterior derivative $\omega = d \theta$ is a symplectic form on stable plus unstable distribution $E^s + E^u$.  Also $\theta$ and hence $\omega$ are invariant under the geodesic flow, and thus every Oseledets space $E_{\lambda}$ with Lyapunov exponent $\lambda$ is $\omega$-orthogonal to all $E_{\lambda'}$ unless $\lambda'=- \lambda$. Since $\omega$ is non-degenerate, $\omega$ restricted to $E_\lambda\times E_{-\lambda}$ is also non-degenerate for each $\lambda$. Note that 
$\widehat{\mc{E}}$ gives rise to unstable Jacobi fields with Lyapunov exponent 1.

    This immediately gives the following

\begin{corollary} 
The maximal Lyapunov spaces $E_1$ can be extended to be a $C^1$ distribution $E_1^u$ on the regular set $\widehat{\mc{R}}$. The orthogonal complement   $(E_1^u)^{\perp} \cap E^s$ w.r.t. $\omega$ is defined and $C^1$ on the same set $\widehat{\mc{R}}$ and equals   $\oplus _{-1 < \lambda <0} E_{\lambda}$ almost everywhere.   The analogous statements hold for $E_{-1}$ (yielding $E_1^s$) and $E_{-1}^\perp\cap E^u=\oplus _{0 < \lambda <1} E_{\lambda}$ a.e. as well.
\end{corollary}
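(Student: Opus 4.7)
The plan is to identify $E_1^u$ geometrically from $\widehat{\mc E}$ and then match it with the Oseledets space on a full-measure set. In the horizontal-vertical decomposition of $TSM$, a double tangent vector $(J(0), J'(0))$ corresponds to the Jacobi field $J$ along $c_v$. For $w \in \widehat{\mc E}(v)$, the parallel extension $E(t)$ makes curvature $-1$ with $g_tv$, so $J(t) = e^t E(t)$ is an unstable Jacobi field with $J(0) = J'(0) = w$. I would therefore define
\[ E_1^u(v) := \{(w, w) \in T_v SM : w \in \widehat{\mc E}(v) \}. \]
Smoothness of $\widehat{\mc E}$ on $\widehat{\mc R}$ (Section \ref{sec:smooth}) then makes $E_1^u$ a $C^\infty$ distribution on $\widehat{\mc R}$ of rank $\hrk(M)$, which gives the first part of the claim.

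To see that this agrees with the Oseledets space $E_1$ almost everywhere, Lemma \ref{BallCon-lem} shows $\|\Lambda(v,t)w\| = e^t\|w\|$ for $w \in \widehat{\mc E}(v)$, so vectors in $E_1^u$ have Lyapunov exponent exactly $1$. Conversely, on the full-measure set of bi-recurrent Oseledets-regular vectors lying in $\mc R \subset \widehat{\mc R}$ (Corollary \ref{strong=weak} gives $\mc E = \widehat{\mc E}$ there), Lemma \ref{maxLyap} forces every unstable Jacobi field with initial vector in $\widehat{\mc E}(v)^\perp$ to have Lyapunov exponent strictly less than $1$. Dimension count finishes $E_1^u(v) = E_1(v)$ almost everywhere; the smooth distribution $E_1^u$ is then the desired extension.

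For the $\omega$-orthogonal statement, I would use that $\omega = d\theta$ is smooth, $g_t$-invariant, and pairs $E^s$ non-degenerately with $E^u$. Granted that $E^s$ is $C^1$ on $SM$ (from the bunching provided by the pinching assumption of the main theorem), the condition $\omega(\cdot, e) = 0$ for $e$ ranging over a smooth frame of $E_1^u$ cuts out a $C^1$ subbundle of $E^s$ of rank $(n-1) - \hrk(M)$ on $\widehat{\mc R}$. To identify it with $\bigoplus_{-1 < \lambda < 0} E_\lambda$ almost everywhere, I would invoke the standard argument that $g_t$-invariance of $\omega$ gives $\omega(u,v) = \omega(Dg_t u, Dg_t v) \sim e^{t(\lambda+\lambda')}\omega(u,v)$ for $u \in E_\lambda$, $v \in E_{\lambda'}$, forcing $\omega(E_\lambda, E_{\lambda'}) = 0$ whenever $\lambda + \lambda' \neq 0$. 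Hence $\bigoplus_{-1 < \lambda < 0} E_\lambda$ is $\omega$-orthogonal to $E_1$ and lies in $(E_1^u)^\perp \cap E^s$; applying the analogous construction to $-v$ gives $\dim E_{-1} = \hrk(M)$, so the dimensions agree and the two coincide a.e. The stable versions (the $E_{-1}$ and $E_{-1}^\perp \cap E^u$ statements) follow by running the same argument on $-v$, or equivalently on the reversed flow.

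The main obstacle is the $C^1$ regularity of the complement: the smoothness of $E_1^u$ and of $\omega$ is automatic from the previous sections, but $C^1$ smoothness of $E^s$ is an input needed beyond $K \geq -1$ alone, available in the present setting through Hirsch--Pugh--Shub style bunching estimates under the pinching hypothesis of Theorem \ref{thm:main}.
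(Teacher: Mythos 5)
Your proposal is correct and follows essentially the paper's own argument: lift the smooth distribution $\widehat{\mc{E}}$ to define $E_1^u$ on $\widehat{\mc{R}}$, match it with the Oseledets space $E_1$ on a full-measure set of recurrent regular vectors via Lemma \ref{maxLyap} (with Corollary \ref{strong=weak}), and obtain the slow complement as the $\omega$-orthocomplement inside $E^s$, equal to $\bigoplus_{-1<\lambda<0}E_\lambda$ almost everywhere by non-degeneracy of $\omega$ and a dimension count. The one small refinement: the $C^1$ regularity of $E^s$ is automatic for geodesic flows in negative curvature (Hirsch--Pugh \cite{Hirsch75}) and does not require the $\frac14$-pinching hypothesis you invoke.
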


   We will call the spaces $E_{<1}^s:=(E_1^u)^{\perp}\cap E^s$ and $E_{<1}^u:=(E_{1}^s)^{\perp}\cap E^u$  the {\em extended slow  stable and unstable} subspaces. Similarly we call $E_1^{s}$ and $E_1^{u}$ the {\em extended fast stable and unstable} subspaces.

\begin{proof}  On $\widehat{\mc{R}}$, $\widehat{\mc{E}}$ is defined and smooth.  On $\mc{R}$, $\mc{E}$ is also defined and smooth. Moreover the distribution $\mc{E}$ agrees with $\widehat{\mc{E}}$  on $\mc{R}$.  

   By Lemma \ref{maxLyap}, on the set  $\Omega=\{v\in \mc{R}, v \text{ is forward recurrent under } g_tv\}$,  $E_1$ agrees with  the lift to unstable Jacobi fields of $\mc{E}$  on $TSM$, i.e. $w\in \mc{E}(v)$ is identified with the Jacobi field $\Lambda(v,t)w$. The set $\Omega$ has full measure.  Hence $E_1$ extends smoothly on $\widehat{\mc{R}}$ to a distribution $E_1^u$.

   Now take the orthogonal  complement (w.r.t.~ the form $\omega$) to $E_1^u$,  $(E_1^{u})^{\perp} \cap E^s$, on $\widehat{\mc{R}}$ in the stable distribution.
 Since $E^s$ is $C^1$, and $E_1^u$ is even $C^{\infty}$, $(E_1^u)^{\perp} \cap E^s$ is $C^1$. 
 
   Since $\omega$ pairs Lyapunov spaces where defined a.e. on $\widehat{\mc{R}}$, $\oplus _{-1 < \lambda <0} E_{\lambda}\subset (E_1^u)^\perp\cap E^s$. Since $\omega$ is nondegenerate, the dimension of the latter subspace is exactly $n-1-\hrk(M)$ everywhere on $\widehat{\mc{R}}$, and hence they agree.

   A similar argument applies to  $E_{-1}$ and its perpendicular complement  w.r.t. $\omega$ in the unstable subspace where now we use forward recurrent vectors.   \end{proof}

\section{Slow Stable Spaces and Integrability}  \label{sec:kanai}

    In the tangent bundle $T\, SM$ of the unit tangent bundle, consider the subset $T\widehat{\mc{R}}\subset T\,SM$, which is the  union of tangent fibers of $SM$ at points in $\widehat{\mc{R}}$. On $T\widehat{\mc{R}}$, there is a $C^1$ decomposition $E_{1}^s+E_{<1}^s + E^0+E_{1}^u+E_{<1}^u$, where $E_1^{s/u}, E_{<1}^{s/u}$ denote  the extended stable/unstable fast and slow Lyapunov exponent distributions respectively   defined in the last section.

   We will define a special connection for which this decomposition is parallel, and use that to argue integrability of the slow unstable direction.   Such connections were introduced by Kanai to study geodesic flows with smooth stable and unstable foliations in \cite{kanai88}.  Our particular construction is motivated by that of Benoist, Foulon and Labourie in \cite{BFL90} where they classify contact Anosov flows with smooth Oseledets' decomposition.  We refer to \cite[Definition 2.49 and Proposition 2.58]{Gallot-Hulin-Lafontaine} for the basic facts on affine connections we will need.

    We recall the formula for the contact 1-form $\theta$:   $\theta_{(x,v)}(\xi)=<v,\xi^0>$, where $(x,v)\in SM$ and $\xi\in T_{(x,v)}SM$, where $\xi_0=d\,pt (\xi)$. 
Then  the 2-form $d \theta$ becomes
$$d\theta(\xi,\eta)=<\xi_1^u,\eta_1^s>+<\xi_{<1}^u,\eta_{<1}^s>-<\xi_1^s,\eta_1^u>-<\xi_{<1}^s,\eta_{<1}^u>,$$
where the indices indicate appropriate components when we decompose $\xi$ or $\eta$ w.r.t.~ the decomposition $E_{1}^s+E_{<1}^s + E^0+E_{1}^u+E_{<1}^u$.

   We let $\mc{X}$ denote the geodesic spray, i.e., the generator of the geodesic flow which is the vector field belonging to $E_0$ obtained by lifting unit tangent vectors of $M$ to $TSM$ horizontally.
  
\begin{proposition}\label{prop:conn}
There exists a unique connection $\nabla$ on $T\widehat{\mc{R}}$ such that
\begin{enumerate}
\item $\nabla \theta =0$, $\nabla d\theta=0$, and $\nabla E_0\subset E_0, \nabla E_{i}^{s/u}\subset E_i^{s/u}$ for $i\in \{1,<1\}$.
\item For any sections $Z_{1}^s,Z_{<1}^s, Z_{1}^u,Z_{<1}^u$ of $E_{1}^s,E_{<1}^s,E_{1}^u,E_{<1}^u$ respectively, we have for $i,j\in \{1,<1\}$
\[\nabla_{Z_{i}^s}Z_{j}^u=p_{E_{j}^u}([Z_{i}^s,Z_{j}^u]),\]
\[\nabla_{Z_{i}^u}Z_{j}^s=p_{E_{j}^s}([Z_{i}^u,Z_{j}^s]),\]
\[\nabla_{\mc{X}}Z_{i}^{s/u}=[\mc{X},Z_{i}^{s/u}],\]
where the $p_{E_{j}^{s/u}}$ are the projections to the $E_j^{s/u}$ subspaces.
\end{enumerate}
In addition, $\nabla$ is invariant under the geodesic flow $g_t$.
\end{proposition}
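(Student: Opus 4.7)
\emph{Strategy.} Following the template of Benoist--Foulon--Labourie, I would show that conditions (1) and (2) overdetermine $\nabla$ to obtain uniqueness, construct $\nabla$ case-by-case to obtain existence, and then deduce $g_t$-invariance from uniqueness.

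\emph{Uniqueness.} By $C^\infty$-bilinearity it suffices to prescribe $\nabla_X Y$ when $X,Y$ are sections of the irreducible subbundles $E_0, E_1^{s/u}, E_{<1}^{s/u}$. Condition (2) already pins down the mixed stable/unstable derivatives together with $\nabla_{\mc{X}} Z_i^{s/u}$. For $\nabla_X \mc{X}$, the preservation $\nabla E_0\subset E_0$ forces $\nabla_X\mc{X}=f(X)\mc{X}$, and then $(\nabla_X\theta)(\mc{X})=-f(X)=0$ by $\nabla\theta=0$. For the remaining same-type cases $\nabla_{Z_i^s}Z_j^s$ and $\nabla_{Z_i^u}Z_j^u$, the equation $\nabla d\theta=0$ gives, for any $W\in E_j^u$,
\[ d\theta(\nabla_{Z_i^s} Z_j^s, W) = Z_i^s\bigl(d\theta(Z_j^s, W)\bigr) - d\theta\bigl(Z_j^s, \nabla_{Z_i^s} W\bigr), \]
whose right-hand side is known since the last bracket is the mixed derivative from (2). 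Because $\nabla_{Z_i^s}Z_j^s\in E_j^s$ by subspace preservation and $d\theta$ restricts to a nondegenerate pairing on $E_j^s\times E_j^u$, the left factor is uniquely determined; the unstable case is symmetric.

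\emph{Existence.} Define $\nabla$ by (2) in the mixed cases, $\nabla_X\mc{X}:=0$, and the same-type derivatives by the displayed formula, then extend by $C^\infty$-linearity using the direct sum decomposition of $T\widehat{\mc{R}}$. The crux is to verify that the right-hand side above is $C^\infty$-linear in $W$: the terms $Z_i^s(f)\,d\theta(Z_j^s,W)$ arising from $Z_i^s(d\theta(Z_j^s,fW))$ and from $d\theta(Z_j^s,\nabla_{Z_i^s}(fW))$ cancel exactly, using that $\nabla_{Z_i^s} W$ is already tensorial in $W$; then nondegeneracy of $d\theta$ produces a well-defined element of $E_j^s$, after which tensoriality in $Z_i^s$ and the Leibniz rule in $Z_j^s$ follow from analogous cancellations. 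The mixed derivatives in (2) are tensorial in the first slot because $p_{E_j^u}(Z_i^s)=0$ by the direct sum decomposition, and obey Leibniz in the second slot by the Leibniz rule for Lie brackets. The conditions in (1) are then automatic: subspace preservation is built in; $\nabla\theta=0$ follows from $\nabla_X\mc{X}=0$ combined with subspace preservation; $\nabla d\theta=0$ holds by construction on $E_j^s\times E_j^u$ pairings and trivially on the others, since $d\theta$ vanishes there and subspace preservation keeps both summands of $(\nabla_X d\theta)(Y,Z)$ in a vanishing pairing.

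\emph{Invariance and main obstacle.} The contact form $\theta$, its differential $d\theta$, the spray $\mc{X}$, and the splitting $E_0\oplus E_1^{s/u}\oplus E_{<1}^{s/u}$ are all $g_t$-invariant on $\widehat{\mc{R}}$. Since Lie brackets and projections intertwine with the diffeomorphism $g_t$, the pullback $g_t^*\nabla$ also satisfies conditions (1)--(2), and uniqueness forces $g_t^*\nabla=\nabla$. The main obstacle I anticipate is promoting the implicit same-type formula to a genuine connection: all three axioms ($C^\infty$-linearity in $W$, tensoriality in $Z_i^s$, and Leibniz in $Z_j^s$) reduce to the same cancellation between the two summands on the right, but each must be checked by direct computation. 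A secondary nuisance is the bookkeeping of indices $i,j\in\{1,<1\}$ across the several mixed subcases, but this only multiplies identical arguments.
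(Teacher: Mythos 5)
Your proof is correct and follows essentially the same approach as the paper's: derive $\nabla\mc{X}=0$ from the conditions in (1), pin down the same-type derivatives $\nabla_{Z_i^{s/u}}Z_j^{s/u}$ via the $\nabla d\theta=0$ identity together with nondegeneracy of the pairing $d\theta|_{E_j^s\times E_j^u}$, and deduce $g_t$-invariance from the invariance of $\theta$, $d\theta$, $\mc{X}$, and the splitting. The paper's existence check is stated as routine, whereas you spell out the tensoriality and Leibniz cancellations, but the route is the same.
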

\begin{proof}
We note that $\nabla d\theta = 0$ is equivalent with $Wd\theta(Y,Z)=d\theta(\nabla_WY,Z)+d\theta(Y,\nabla_WZ)$ for any vector fields $W,Y,Z$. And $\nabla \theta = 0$ is equivalent with $\theta(\nabla_YZ)=Y\theta(Z)$ for any vector fields $Y,Z$. Thus $\theta(\nabla_Y \mc{X})=0$ and $d\theta(\nabla_Y  \mc{X},Z)=0$ for any vector fields $Y,Z$. It follows that $\nabla \mc{X}=0$.    Furthermore, given a $C^1$ function $f:\widehat{\mc{R}} \to \R$, we set $\nabla _Y (f  \mc{X}) = Y(f)  \mc{X}$.    

   Moreover, $\nabla_{Z_{i}^u}Z_{j}^u$ is uniquely determined by the condition $\nabla E_i^u\subset E_i^u$ and the equality 
\[Z_{i}^ud\theta(Z_{j}^u,Z^s)=d\theta(\nabla_{Z_{i}^u}Z_{j}^u,Z^s)+d\theta(Z_{j}^u,\nabla_{Z_{i}^u}Z^s),\]
for arbitrary section $Z^s$ of $E^s$ and $i,j\in \{1,<1\}$. Similarly  $\nabla_{Z_{i}^s}Z_{j}^s$ is uniquely determined.
By linearity,   we have defined $\nabla _Y Z$ for all vector fields $Y, Z$.   

   It is now easy to check that $\nabla$ satisfies the properties of a connection on $\widehat{\mc{R}}$ (cf. e.g. \cite[Definition 2.49]{Gallot-Hulin-Lafontaine}).   
That $\nabla$ is invariant under the geodesic flow $g_t$ follows from the construction.  Indeed, the slow and fast stable and unstable spaces are invariant under $g_t$, $(g_t)_* ([Y, Z] ) = [ (g_t)_* Y, (g_t)_*  Z]$ and $ \mc{X}$ is invariant under $g_t$ by definition.
\end{proof}

     The next lemma is basically well-known (cf.   e.g. \cite[Lemma 2.5]{BFL90}).   Since our connection is only defined on a dense open set and not necessarily bounded we outline the  proof.
Since Liouville measure is ergodic for the geodesic flow $g_t$ on $SM$, the Lyapunov exponents $\gamma _i$ are defined and constant on a  $g_t$-invariant full measure set $\Sigma$ in $\widehat{\mc{R}}$.  We can assume in addition that all $v \in \Sigma$ are forward and backward recurrent for $g_t$, and that the Oseledets decomposition  $T_v \mc{R} = \oplus E_{\gamma _i}$  into Lyapunov subspaces $E_{\gamma _i}$ is defined on $\Sigma$.  Thus if $Z_i \in E_{\gamma _i}$, the forward and backward Lyapunov exponents are defined and equal to $\gamma _i$. 

\begin{lemma}\label{lem:tensor} Let $v \in \Sigma$.  If $K$ is a geodesic flow invariant tensor and $Z_1,\dots,Z_k$ are vectors in $T_v \widehat{\mc{R}}$ with $Z_i \in E_{\gamma _i}$, then  $K(Z_1,\dots,Z_k)$ is either zero or has Lyapunov exponent $\gamma_1+\dots+\gamma_k$.
\end{lemma}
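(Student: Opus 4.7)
The plan is to use $g_t$-invariance of $K$ to transport the multilinear relation along the flow, and then match Lyapunov exponents via the Oseledets decomposition at $v$. Invariance of $K$ gives
$$Dg_t \bigl(K_v(Z_1, \dots, Z_k)\bigr) \;=\; K_{g_tv}(Dg_t Z_1, \dots, Dg_t Z_k) \quad \text{for all } t \in \R.$$
Set $W = K_v(Z_1, \dots, Z_k) \in T_v \widehat{\mc{R}}$ and decompose $W = \sum_\mu W_\mu$ with $W_\mu \in E_\mu$ according to the Oseledets splitting at $v$. The goal is to show that $W_\mu = 0$ for every $\mu \neq \gamma_1 + \cdots + \gamma_k$, which forces $W$ either to vanish or to lie in $E_{\gamma_1 + \cdots + \gamma_k}$.

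Bounding the right hand side by multilinearity gives $\norm{K_{g_tv}(Dg_t Z_1, \dots, Dg_t Z_k)} \le \norm{K_{g_tv}} \prod_i \norm{Dg_t Z_i}$, and since $Z_i \in E_{\gamma_i}$ at the Lyapunov regular point $v \in \Sigma$, $\norm{Dg_t Z_i} = e^{(\gamma_i + o(1))t}$ as $t \to \pm\infty$. Because $v$ is forward and backward recurrent in the open set $\widehat{\mc{R}}$ on which $K$ is continuous, there are sequences $t_n^\pm \to \pm\infty$ with $g_{t_n^\pm}v$ returning to a fixed compact neighborhood of $v$, so $\norm{K_{g_{t_n^\pm}v}}$ stays bounded along them. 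Taking logarithms and dividing by $t_n^\pm$ (the inequality reverses for the negative sequence since $t_n^- < 0$) then yields
$$\frac{1}{t_n^+} \log \norm{Dg_{t_n^+}W} \le \gamma_1 + \cdots + \gamma_k + o(1), \qquad \frac{1}{t_n^-} \log \norm{Dg_{t_n^-}W} \ge \gamma_1 + \cdots + \gamma_k + o(1).$$

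On the other hand, at the Lyapunov regular point $v$, the Oseledets filtration isolates exponential rates:
$$\lim_{t \to +\infty} \frac{1}{t} \log \norm{Dg_t W} = \max\{\mu : W_\mu \neq 0\}, \qquad \lim_{t \to -\infty} \frac{1}{t} \log \norm{Dg_t W} = \min\{\mu : W_\mu \neq 0\}.$$
Combining with the estimates above forces $\max\{\mu : W_\mu \neq 0\} \le \gamma_1 + \cdots + \gamma_k \le \min\{\mu : W_\mu \neq 0\}$, hence at most one component is nonzero, and it occurs at $\mu = \gamma_1 + \cdots + \gamma_k$. This gives the conclusion.

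The main technical point is the subexponential control of $\norm{K_{g_tv}}$ along the orbit. Since $K$ is defined only on the open set $\widehat{\mc{R}}$, no global norm bound is automatic; however, continuity of $K$ together with recurrence of $v \in \Sigma$ furnishes uniform bounds along appropriate return sequences, which is all that the Lyapunov comparison requires. A secondary point is that the angles of the Oseledets splitting at $g_tv$ may decay subexponentially as $|t| \to \infty$, but this contributes only to the $o(t)$ term in the exponent and does not affect the final equality.
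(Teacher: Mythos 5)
Your proposal is correct and follows essentially the same route as the paper: bound $\norm{K_{g_tv}}$ along forward and backward recurrence sequences returning to a fixed neighborhood of $v$ inside $\widehat{\mc{R}}$, use $g_t$-invariance and multilinearity to transfer the growth rates of $Dg_tZ_i$ to $Dg_t\bigl(K(Z_1,\dots,Z_k)\bigr)$, and then use the Oseledets filtration at the regular point $v$ to sandwich all nonzero components into the single exponent $\gamma_1+\dots+\gamma_k$. Your remark that subexponential angle decay is absorbed into the $o(t)$ term, while correct, is handled implicitly in the paper by working with forward and backward Lyapunov exponents (which are limits, not sup/inf) at a Lyapunov regular point.
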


\begin{proof}
There is a neighborhood $U$ of $v$ and $C>0$ such that $\norm{K(Y_1,\dots,Y_k)}\le C \norm{Y_1}\cdots \norm{Y_k}$, for any vectors  $Y_1,\dots,Y_k$ with footpoints in the  neighborhood $U$.  Suppose that $K(Z_1,\dots,Z_k)\neq 0$. If  $g_t(v)\in U$  for some $t>0$ then
\[\norm{D_vg_t K(Z_1,\dots,Z_k)}=\norm{K(D_vg_tZ_1,\dots,D_vg_tZ_k)}\le C \norm{D_vg_tZ_1} \cdots \norm{D_vg_tZ_k}.\]
Thus,
\[\frac{1}{t}\log(\norm{D_vg_t K(Z_1,\dots,Z_k)})\le \frac{1}{t}(\log(C)+log(\norm{D_vg_tZ_1})+\dots +\log(\norm{D_vg_tZ_k})).\]
   Since $v$ is forward recurrent, therre will be a sequence of times $ t \rightarrow \infty$  with $g_t(v)\in U$.  Thus  the forward Lyapunov exponent of  $K(Z_1,\dots,Z_k)$  is    at most  $\gamma_1+\dots+\gamma_k$.  Hence  $K(Z_1,\dots,Z_k)$ cannot have nonzero components in  $E_{\gamma}$ if $\gamma > \gamma_1+\dots+\gamma_k$.

Similarly, if $g_s(v)\in U$ for some $s<0$ then
\[\frac{1}{s}\log(\norm{D_vg_s K(Z_1,\dots,Z_k)})\ge \frac{1}{s}(\log(C)+log(\norm{D_vg_sZ_1})+\dots +\log(\norm{D_vg_sZ_k})).\]
  Since $v $ is backward recurrent, arguing as above,   the backward Lyapunov exponent of  $K(Z_1,\dots,Z_k)$  is    at least  $\gamma_1+\dots+\gamma_k$.
Hence $K(Z_1,\dots,Z_k)$ cannot have nonzero components in  $E_{\gamma}$ if $\gamma  < \gamma_1+\dots+\gamma_k$.
 
\end{proof}

    Recall that the connection $\nabla$ is only $C^1$, and only defined on $\widehat{\mc{R}}$.  This means that the torsion tensor is only a $C^0$-tensor, and the curvature tensor is not defined.  However, slow and fast stable and unstable distributions are smooth on stable and unstable manifolds in $\widehat{\mc{R}}$.  Hence the restriction of $\nabla$ to stable or unstable manifolds is also smooth by the construction of $\nabla$.   In particular,  the curvature tensor of $\nabla$ restricted to stable or unstable manifolds is well defined.  

\begin{corollary}\label{conn-flat}
The torsion and curvature tensors of $\nabla$ restricted to the slow Lyapunov distributions $E_{<1}^{s/u}$ 
 and also each stable/unstable space $E^{s/u}$ are zero.
\end{corollary}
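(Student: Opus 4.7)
The approach is to use the $g_t$-invariance of $\nabla$ from Proposition \ref{prop:conn}, which makes both the torsion $T$ and the (leafwise-defined) curvature $R$ into $g_t$-invariant tensors on $\widehat{\mc{R}}$; their values on Oseledec summands are then controlled by Lemma \ref{lem:tensor}.

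The decisive input is strict $\frac14$-pinching: by compactness of $SM$ the sectional curvature satisfies $K\le-\frac14-\epsilon$ for some $\epsilon>0$, and Rauch comparison forces every unstable Lyapunov exponent of $g_t$ to lie in $(\tfrac12,1]$ and every stable one in $[-1,-\tfrac12)$. In particular, any sum of two such exponents taken from a single side has absolute value strictly greater than $1$, exceeding the maximal Lyapunov exponent of $g_t$; the same holds \emph{a fortiori} for sums of three.

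Fix a bi-recurrent Oseledec-regular point $v\in \Sigma\subset\widehat{\mc{R}}$, and let $Y,Z\in E^u(v)$. Decomposing into Oseledec summands $Y=\sum Y_i,\ Z=\sum Z_j$ with exponents $\gamma_i,\gamma_j\in(\tfrac12,1]$, Lemma \ref{lem:tensor} says each $T(Y_i,Z_j)$ either vanishes or has Lyapunov exponent $\gamma_i+\gamma_j>1$; since no such space exists, $T(Y_i,Z_j)=0$, and bilinearity yields $T(Y,Z)=0$. Applying the lemma instead to $R(X,Y)Z$ with three $E^u$-arguments forces $R(X,Y)Z=0$ by the same numerical obstruction. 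The stable case is symmetric, and the slow statements follow automatically from the inclusion $E_{<1}^{s/u}\subset E^{s/u}$.

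Finally, extend the pointwise vanishing from $\Sigma$ to all of $\widehat{\mc{R}}$. The torsion is $C^0$ on $\widehat{\mc{R}}$ and $\Sigma$ is dense there (full Liouville measure inside an open set), so $T\equiv 0$. For $R$, on each stable or unstable leaf $L\subset\widehat{\mc{R}}$ the restriction $\nabla|_L$ is smooth, and by Fubini against the local product structure $\Sigma\cap L$ has full leafwise measure for $\dvol$-almost every $L$; combined with $g_t$-invariance this gives $R|_L\equiv 0$ on every leaf. The one mildly delicate step is this last leafwise density argument, but it is not strictly necessary for what follows, since the subsequent use of Corollary \ref{conn-flat} in Section \ref{sec:kanai} only requires the vanishing at regular points established above.
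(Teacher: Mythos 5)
Your proof is correct and takes essentially the same route as the paper: flow-invariance of $\nabla$ makes $T$ and $R$ invariant tensors, Lemma \ref{lem:tensor} places any nonzero output in the Oseledets space of the summed exponents, and strict $\frac14$-pinching forces two (or three) positive exponents to sum past the maximal exponent $1$, leaving no room — the paper phrases this via the ratio of exponents lying in $(\frac12,2)$, but that is equivalent to your direct use of the $(\frac12,1]$ bounds. The paper then dispatches the extension from $\Sigma$ to $\widehat{\mc{R}}$ with a one-line ``by continuity''; your more careful treatment (continuity for $T$, leafwise Fubini plus smoothness of $R|_L$ on leaves for $R$) and your observation that only a.e.-leaf vanishing is actually needed for Corollary \ref{cor:integ} and the subsequent holonomy/Foulon argument are both reasonable elaborations of the same step rather than a different method.
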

\begin{proof}   Since $\nabla$ is geodesic flow invariant, so are the torsion and curvature tensors.   In strict $\frac 1 4$-pinched manifolds,  the ratio of any two Lyapunov exponents lies in  $(\frac 1 2, 2)$. Thus this corollary follows at points of $\Sigma$ immediately from the previous lemma and the strict $\frac 1 4$-pinching condition. Since $\Sigma$ is of full measure, and therefore dense, the statements hold everywhere on $\widehat{\mc{R}}$ by continuity.
\end{proof}

\begin{corollary}\label{cor:integ}
The slow unstable Lyapunov  distribution  $E_{<1}^{u}$ is integrable.
\end{corollary}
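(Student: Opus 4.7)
The plan is to apply the Frobenius theorem, using the torsion-free property of $\nabla$ restricted to $E_{<1}^u$ established in Corollary \ref{conn-flat}. Since $E_{<1}^u$ is a $C^1$ distribution on $\widehat{\mc{R}}$ (from the last section), it suffices to verify that $[X,Y]$ is a section of $E_{<1}^u$ whenever $X,Y$ are.

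First, I would recall from Proposition \ref{prop:conn} that the connection $\nabla$ preserves the splitting, and in particular $\nabla E_{<1}^u \subset E_{<1}^u$. Hence for any sections $X, Y$ of $E_{<1}^u$, the covariant derivatives $\nabla_X Y$ and $\nabla_Y X$ are again sections of $E_{<1}^u$ on $\widehat{\mc{R}}$.

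Next, the torsion tensor $T(X,Y) = \nabla_X Y - \nabla_Y X - [X,Y]$ of $\nabla$ vanishes when restricted to $E_{<1}^u \times E_{<1}^u$ by Corollary \ref{conn-flat}. Therefore
\[
[X,Y] \;=\; \nabla_X Y - \nabla_Y X \;\in\; E_{<1}^u,
\]
so the distribution is involutive. Since $E_{<1}^u$ is $C^1$, the classical Frobenius theorem implies that $E_{<1}^u$ is integrable on the open set $\widehat{\mc{R}}$.

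The main subtlety is that $\nabla$ is only $C^1$ and merely defined on the open dense set $\widehat{\mc{R}}$, so one has to be careful that the identity $[X,Y] = \nabla_X Y - \nabla_Y X$ holds pointwise on $\widehat{\mc{R}}$ and that the Frobenius theorem applies with $C^1$ regularity. Both are fine: the torsion identity is a pointwise tensorial equation valid wherever $\nabla$ is defined, and Frobenius integrability for involutive $C^1$ distributions is standard. Integrability on $\widehat{\mc{R}}$ is exactly what is needed for the subsequent arguments concerning holonomy and the foliation on $\partial \til{M}$.
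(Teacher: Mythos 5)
Your proof is correct in substance and reaches the same conclusion as the paper, but via a slightly leaner route. The paper first exploits the flatness of $\nabla$ (from Corollary \ref{conn-flat}) to build, by path-independent parallel transport, a local $C^1$ parallel frame spanning $E_{<1}^u$; it then uses the vanishing of the torsion on $E_{<1}^u$ to show that this parallel frame has vanishing Lie brackets, and invokes the Frobenius theorem in its elementary form for commuting $C^1$ vector fields (this is the version cited from Lang). You instead observe directly that for arbitrary $C^1$ sections $X,Y$ of $E_{<1}^u$ the torsion identity, combined with $\nabla E_{<1}^u\subset E_{<1}^u$ from Proposition \ref{prop:conn}, gives $[X,Y]=\nabla_X Y-\nabla_Y X\in E_{<1}^u$ pointwise, so the distribution is involutive, and you appeal to Frobenius for involutive $C^1$ distributions. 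Both are valid. What your route buys is economy: the flatness of $\nabla$ is simply not used. What the paper's route buys is that it reduces to Frobenius for commuting vector fields, sidestepping any care about how involutivity should be formulated when $[X,Y]$ is merely $C^0$. If you prefer your version, just be explicit that involutivity here is the pointwise condition $[X,Y](p)\in E_{<1}^u(p)$ for $C^1$ sections $X,Y$ and that this is the hypothesis the $C^1$ Frobenius theorem needs; with that caveat the argument is complete.
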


\begin{proof} 
The slow unstable Lyapunov  distribution $E_{<1}^{u}$ is invariant under the parallel transport by $\nabla$, by construction of $\nabla$. Since $\nabla$ is flat, parallel transport is independent of path. Thus we can choose canonical local parallel $C^1$ vector fields tangent to and spanning the distribution. On the other hand, since the restriction of torsion on unstable leaves is zero we have that the commutators of these vector fields are zero. By the Frobenius Theorem for $C^1$ vector fields, \cite[Theorem 1.1 Chapter 6]{Lang95}, the distribution is integrable.
\end{proof}

As usual we will consider the $\pi_1(M)$-lifts of the stable and unstable manifolds by the same notation in $S\til{M},$ and we will work in $SM$ or $S\til{M}$ as appropriate without further comment. 
Given $v\in S\til{M}$ , the map $\pi_v: W^u(v)\to \partial \til{M}-\{c_v(-\infty)\}$, defined by $\pi_v(w)=c_w(\infty)$, is a $C^1$ diffeomorphism. For $w\in W^s(v)$, the {\em stable holonomy} is defined as 
$$h_{v,w}:W^u(v)-\{\pi_v^{-1}(c_w(-\infty)\}\to W^u(w)-\{\pi_w^{-1}(c_v(-\infty))\}.$$
Note that $ h_{v,w} (x)$ is simply the intersection of the weak stable manifold
of $x$ with  $W^u(w)$.  In particular the stable holonomy maps are $C^1$. Indeed, the sectional curvatures of $M$ are strictly $\frac 14$-pinched and hence the weak
stable foliation is $C^1$ \cite{Hirsch75}. Moreover, the stable holonomy maps
$h_{a,b}$ are $C^1$ with derivative bounded uniformly in $d_{S\til{M}}(a,b)$ for
$b\in \cup_tg^t W^s(a)$. This follows from the fact that the unstable foliation
is uniformly transversal to the stable foliation, by compactness of $SM$. In fact, Hasselblatt \cite[Corollary 1.7]{Hasselblatt94} showed that the derivative is even \holder continuous.
  
We call a distribution {\em stable holonomy invariant}  if it is invariant under (the derivative map of) all holonomies $h_{v,w}$ for all $v\in S\til{M}$ and $w\in W^s(v)$. We will now adapt  an argument by Feres and Katok \cite[Lemma 4]{FeresKatok1990}.

\begin{lemma}  \label{holoNomy}
The slow unstable spaces $E_{<1}^{u}\subset T\widehat{\mc{R}}$ are stable holonomy invariant.
\end{lemma}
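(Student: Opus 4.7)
The plan is to follow the approach of Feres--Katok \cite{FeresKatok1990}: combine the equivariance $g_t\circ h_{v,w} = h_{g_tv, g_tw}\circ g_t$ with the uniform $C^1$-bound on stable holonomy maps between nearby unstable leaves, to conclude that $dh_{v,w}$ preserves the characterization of $E_{<1}^u$ as the subspace of $E^u$ consisting of vectors with forward Lyapunov exponent strictly below $1$.

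First I would differentiate the equivariance to get
$$dg_t \circ dh_{v,w} = dh_{g_tv, g_tw} \circ dg_t$$
on the relevant domains. Since $w\in W^s(v)$, we have $d(g_tv, g_tw)\to 0$ exponentially as $t\to\infty$. Combined with the paragraph preceding the lemma (which, under strict $\frac14$-pinching, gives Hasselblatt's uniform estimate on $\|dh_{a,b}\|$ in terms of $d(a,b)$ along the weak stable foliation), this yields a constant $C>0$ with $\|dh_{g_tv,g_tw}\|\le C$ for all $t\ge 0$.

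Second, I would fix a bi-recurrent Oseledets-regular point $a\in W^u(v)\cap \widehat{\mc R}$ whose image $b=h_{v,w}(a)$ is also bi-recurrent and Oseledets-regular. Such pairs are dense in the domain: when $v$ is backward recurrent we have $W^u(v)\subset \widehat{\mc R}$ by Lemma \ref{recurrent}, and absolute continuity of the stable holonomy for Liouville measure lets us intersect with the full-measure bi-recurrent Oseledets-regular set. At such $a$, the corollary ending Section \ref{sec:lyapunov} identifies $E_{<1}^u(a)$ with $\bigoplus_{0<\mu<1} E_\mu(a)$, so any $\xi \in E_{<1}^u(a)$ obeys $\|dg_t\xi\|\le C_\xi e^{\lambda_0 t}$ for some $\lambda_0<1$. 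Then
$$\|dg_t(dh_{v,w}\xi)\| = \|dh_{g_tv,g_tw}(dg_t\xi)\| \le C\, C_\xi\, e^{\lambda_0 t},$$
so $dh_{v,w}\xi$ has forward Lyapunov exponent at most $\lambda_0<1$ at $b$. Since $b$ is Oseledets-regular, this forces the $E_1^u(b)$-component of $dh_{v,w}\xi$ to vanish, placing $dh_{v,w}\xi\in E_{<1}^u(b)$. Running the same argument with $h_{w,v}=h_{v,w}^{-1}$ and comparing dimensions promotes inclusion to equality $dh_{v,w}(E_{<1}^u(a))=E_{<1}^u(b)$.

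Finally, since $E_{<1}^u$ is $C^1$ (hence continuous) on $\widehat{\mc R}$ and $dh_{v,w}$ is continuous, the equality extends by density and continuity from the good pairs $(a,b)$ to all of $W^u(v)\cap \widehat{\mc R}$. The main delicacy I expect is the bookkeeping required to ensure that both $a$ and its image $b$ simultaneously lie in the full-measure set of bi-recurrent Oseledets-regular vectors inside $\widehat{\mc R}$; this is exactly where Lemma \ref{recurrent}, Poincar\'e recurrence, and the absolute continuity of stable holonomy are brought to bear.
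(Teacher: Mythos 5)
Your proposal is correct and follows essentially the same path as the paper's proof: restrict to a pair of good (Oseledets-regular, recurrent) points related by the stable holonomy so that Hasselblatt's uniform derivative bound along the weak-stable orbit forces forward Lyapunov exponents to be preserved, placing $Dh_{v,w}E^u_{<1}\subset E^u_{<1}$; then extend by continuity of $E^u_{<1}$ on $\widehat{\mc R}$ and of $Dh$. Your extra bookkeeping about simultaneously regular pairs $(a,b)$ via absolute continuity of the stable foliation is a harmless elaboration of what the paper leaves terse when it fixes $v\in\Sigma\cap\mc R$ and $w\in\cup_t g^tW^s(v)\cap\Sigma\cap\mc R$.
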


\begin{proof}
First consider $v\in \Sigma\cap \mc{R}$ and $w\in \cup_t g^tW^s(v)\cap\Sigma\cap
\mc{R}$. The distance between $g^tv$ and $g^tw$ remains bounded in forward time.
Hence the derivatives of the holonomy maps $h_{g^tv,g^tw}$ are uniformly bounded
for all $t\geq 0$. If $u\in E^u_{<1}(v)$, then $u$ has forward Lyapunov exponent
$\la<1$ since $v\in\Sigma$. It follows that the image vector $Dh_{v,w}(u)$ also
has forward Lyapunov exponent $\la<1$ and hence belongs to $E^u_{<1}(w)$. In
particular, $Dh_{v,w}E^u_{<1}(v)\subset E^u_{<1}(w)$. By continuity of
$Dh_{v,w}$ and of the extended slow space on $\widehat{\mc{R}}$ the same holds for all
$v,w\in \widehat{\mc{R}}$.
\end{proof}

   We follow ideas of Butler \cite{Butler15} to derive:

\begin{corollary}
The slow unstable distributions are  trivial.
\end{corollary}

\begin{proof}

By the strict $\frac 14$-pinching, the boundary $\partial \til{M}$ of the universal cover admits  a $C^1$ structure for which the projection maps from points or horospheres are $C^1$ (\cite{Hirsch75}). By Lemma \ref{holoNomy}, the projection of the lifts of the slow unstable distribution is independent of the projection point on the horosphere. Using different horospheres we obtain a well-defined distribution on all of $\partial \til{M}$. Note that this distribution is also invariant under $\pi_1(M)$. 

By Corollary \ref{cor:integ}, this distribution is integrable and yields a $C^1$ foliation $\mc{F}$ on the boundary $\partial \til{M}$ which is also $\pi_1(M)$-invariant. Since there is a hyperbolic element of $\pi_1(M)$ which acts with North-South dynamics on $\partial \til{M}$, by Foulon \cite[{{\em  Corollaire}\rm }]{Foulon1994}, the foliation generated by this distribution has  to be trivial.     
\end{proof}
\vspace{.6em}

We are now ready to finish the proof of our main result.  
\vspace{.6em}

\begin{proof}[Proof of Theorem \ref{thm:main}] (strict $\frac14$-pinching case):  Since the slow unstable distribution is trivial,  all unstable Jacobi fields belong to $E^u _1$.  Hence  all sectional curvatures are $-1$ on $\widehat{\mc{R}}$.  Since $\widehat{\mc{R}}$ is open dense in $SM$,  it follows that all sectional curvatures are $-1$.   \end{proof}

\section{Non-Strictly $\frac{1}{4}$-Pinched Case}\label{sec:non-strict}

In this section we extend the proof of the main theorem to the non-strictly $\frac14$-pinched curvature case.

First, consider the set $\mc{O}\subset SM$ of vectors whose smallest positive Lyapunov exponent is $\frac12$. If $\mc{O}$ has positive Liouville measure then Theorem 1.3 of \cite{Connell03} implies $M$ is locally symmetric and our theorem holds. Hence we may assume $\mc{O}$ has measure $0$ and there is a flow invariant full measure set $\mc{P}$ and a $\nu>0$ such that for all $v\in\mc{P}$ the unstable Lyapunov exponents satisfy $\frac 1 2 +\nu < \chi_i^+(v) \le 1$. 

Note that, unlike in the strict quarter-pinched case, we cannot immediately use the vanishing of the torsion of the generalized Kanai connection established in Proposition \ref{prop:conn}. Indeed,  the construction of the generalized Kanai connection used that both stable and unstable distributions are $C^1$ on $SM$ which we do not a priori know in our case.

Instead, we replace the generalized Kanai connection with a similar one assembled from the flow invariant system of measurable affine connections on unstable manifolds constructed  by Melnick in \cite{Melnick16}. The connections are defined on whole unstable manifolds but they are only defined for unstable manifolds $W^u(v)$ for $v$ in a set of full measure.  Moreover, the transversal dependence is only measurable. Mark that we have switched from Melnick's usage of stable manifolds to unstable manifolds.

Following the notation in \cite{Melnick16}, let her $\mc{E}$ be the smooth tautological bundle over $SM$ whose fiber at $v$ is $W^u(v)$. We consider the cocycle $F_v^t$ which is $g^t$ restricted to $W^u(v)$. The ratio of maximal to minimal positive Lyapunov exponents lies in $[1,2)$, and hence the integer $r$ appearing in  Theorem 3.12 of \cite{Melnick16} is $1$. This theorem then reads in our notation as:

\begin{lemma}
There is a full measure flow-invariant set $\mc{U}\subset SM$ where there is a smooth flow-invariant flat connection $\nabla$ on $TW^u(v)$ for $v\in \mc{U}$.
\end{lemma}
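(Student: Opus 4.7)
The plan is to deduce the lemma essentially as a translation of Melnick's Theorem 3.12 into the present setup. The bulk of the work is to check that the nonresonance and spectral hypotheses of that theorem are automatic here, so that the associated system of normal-form connections exists on the unstable manifolds and descends to a smooth, flow-invariant, flat linear connection on each $TW^u(v)$ for $v$ in a suitable full measure set.

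First I would fix the cocycle framework. Let $\mc{E}\to SM$ be the tautological bundle whose fiber at $v$ is the unstable leaf $W^u(v)$, and let $F_v^t = g^t|_{W^u(v)}$. The Oseledets decomposition of $TW^u$ with respect to Liouville measure is well defined on a $g^t$-invariant full measure set $\mc{P}_0\subset \mc{P}$, and along $\mc{P}_0$ the positive Lyapunov spectrum lies in the interval $(\tfrac12+\nu,1]$ by hypothesis. The crucial point is that because $\chi_{\max}/\chi_{\min} < 2$, there are no integer resonance relations $\chi_i = \sum_j n_j \chi_j$ with $n_j\ge 0$ and $\sum n_j\ge 2$ among the exponents on $TW^u$. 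In Melnick's language this forces the integer $r$ appearing in her Theorem 3.12 to be exactly $1$, so her normal form on each $W^u(v)$ is linear (affine) rather than merely polynomial of higher degree.

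Given this, I would apply Theorem 3.12 of \cite{Melnick16} directly. It produces a $g^t$-invariant full measure set $\mc{U}\subset SM$ and, for each $v\in\mc{U}$, a smooth affine structure on the whole unstable leaf $W^u(v)$ that is equivariant under the cocycle $F_v^t$, in the sense that the holonomy of the structure is intertwined by $g^t$ whenever $g^t v\in\mc{U}$. Let $\nabla$ be the Levi-Civita-type connection associated to this affine structure: it is a torsion-free, flat, smooth linear connection on $TW^u(v)$. The $g^t$-equivariance of the affine structure translates directly into $g^t$-invariance of $\nabla$, meaning $(g^t)^*\nabla_{g^t v}=\nabla_v$ for all $v\in\mc{U}$ and all $t\in\R$. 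Finally, flatness and vanishing torsion are built into Melnick's construction, since the normal forms realize $F_v^t$ as a one-parameter family of affine maps between affine spaces.

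The only genuine subtlety I anticipate is the verification, inside the hypotheses of Melnick's theorem, of the integrability and tempered growth conditions required on the cocycle $F_v^t$, but these follow immediately from compactness of $SM$ and smoothness of $g^t$ together with the uniform bound $\chi_i^+(v)\le 1$ coming from the lower curvature bound $K\ge -1$ (Rauch comparison). Once those verifications are in place, the statement is exactly Theorem 3.12 of \cite{Melnick16} specialized to $r=1$, and there is nothing further to prove. Note that as in the strict pinching case the transversal regularity of $\nabla$ is only measurable, so later arguments must work on individual unstable leaves rather than across $SM$; this is the price paid for the relaxation of the pinching assumption.
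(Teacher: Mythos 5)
Your proposal matches the paper's argument: both reduce the lemma to a direct application of Melnick's Theorem 3.12 to the cocycle $F_v^t = g^t|_{W^u(v)}$ on the tautological bundle, noting that the positive Lyapunov spectrum lying in $(\tfrac12+\nu,1]$ forces the degree $r$ in Melnick's theorem to be $1$, whence the normal-form structure is a flat affine (linear) one on each unstable leaf. The paper treats this as essentially immediate — stating the lemma as ``this theorem then reads in our notation as'' after the setup — while you additionally flag the routine verification of the cocycle integrability hypotheses and the merely measurable transversal regularity, both of which the paper also notes in the surrounding discussion.
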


Now we build a connection on vector fields tangent to the slow unstable distribution $E^{u}_{<1}$ on $W^u(v)$ for $v\in \mc{U}$. We emphasize that we do not assume integrability of the slow unstable distribution.  We just construct a connection on sections of the vector bundle given by the slow unstable distribution. More specifically on slow unstable distribution we have the following.

\begin{lemma}
On each unstable leaf $W^u(v)$ for $v$ in a full measure flow invariant subset $\mc{Q}\subset \mc{U}\cap\mc{P}\cap\widehat{\mc{R}}$, there exists a torsion free and flow invariant \cout{flat }connection 
\[
\nabla^{<1}:TW^u(v)\times C^1(W^u(v),E^{u}_{<1})\to C^1(W^u(v),E^{u}_{<1})
\] 
on $E^{u}_{<1}$. Moreover the restriction of the connection to $E^{u}_{<1}\mc{X}$ is torsion free\cout{ and flat}.
\end{lemma}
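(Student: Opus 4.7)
The plan is to produce $\nabla^{<1}$ by restricting Melnick's flow invariant flat connection $\nabla$ on $TW^u(v)$ to sections of $E^u_{<1}$, and then extending by the geodesic direction. The key input is the strict lower bound $\tfrac12+\nu$ on unstable Lyapunov exponents on $\mc{P}$: this makes any sum $\lambda_1+\lambda_2>1$, so both the torsion of $\nabla$ and the obstruction to $\nabla$ preserving $E^u_{<1}$ will vanish for purely tensorial Lyapunov-theoretic reasons. Throughout I would work on a full measure flow-invariant set $\mc{Q}\subset \mc{U}\cap \mc{P}\cap \widehat{\mc{R}}$ contained in the Oseledets regular bi-recurrent set $\Sigma$ from Lemma \ref{lem:tensor}.

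First I would verify $T^{\nabla}\equiv 0$ on $TW^u\times TW^u$ over $\mc{Q}$. Flow invariance of $\nabla$ makes the torsion tensor $T^{\nabla}$ flow invariant, so by Lemma \ref{lem:tensor} applied to Oseledets vectors $X\in E_{\lambda_1}$, $Y\in E_{\lambda_2}$ in $T_v W^u$, either $T^{\nabla}(X,Y)=0$ or $T^{\nabla}(X,Y)\in E_{\lambda_1+\lambda_2}$. But $\lambda_1+\lambda_2>1$ exceeds the maximal exponent in $TW^u$, so $T^{\nabla}(X,Y)=0$; bilinearity gives $T^{\nabla}\equiv 0$. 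Similarly I would show that $\nabla$ preserves $E^u_{<1}$ by introducing the flow invariant tensor
\[
S(X,Y):=p_{E^u_1}(\nabla_X Y),\qquad X\in TW^u,\ Y\in C^1(W^u(v),E^u_{<1}),
\]
where $p_{E^u_1}$ is the $C^1$ projection onto $E^u_1$ along $E^u_{<1}$, available on $\widehat{\mc{R}}$ by the corollary of Section \ref{sec:lyapunov}. Tensoriality in $X$ is immediate, and in $Y$ it follows because $X(f)Y\in E^u_{<1}$ is annihilated by $p_{E^u_1}$. By Lemma \ref{lem:tensor}, a nonzero $S(X,Y)$ with $X\in E_{\lambda_1}$ and $Y\in E_{\lambda_2}\subset E^u_{<1}$ would lie in $E_{\lambda_1+\lambda_2}$; since the target is $E^u_1=E_1$, this forces $\lambda_1+\lambda_2=1$, contradicting $\lambda_1,\lambda_2>\tfrac12$. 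Hence $S\equiv 0$.

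Setting $\nabla^{<1}$ to be the restriction of $\nabla$ to sections of $E^u_{<1}$ then gives a flow invariant connection of the desired type, torsion free in the $E^u_{<1}\times E^u_{<1}$ direction by the first step. I would extend it across the geodesic direction exactly as in Proposition \ref{prop:conn}, setting $\nabla^{<1}_{\mc{X}}Y:=[\mc{X},Y]$ and $\nabla^{<1}_Y\mc{X}:=0$ for $Y\in C^1(W^u(v),E^u_{<1})$; the mixed torsion is then $T^{<1}(\mc{X},Y)=[\mc{X},Y]-0-[\mc{X},Y]=0$ automatically. The main obstacle I expect is that the Lyapunov arguments only give the identities $T^{\nabla}=0$ and $S=0$ pointwise on the Oseledets regular subset $\Sigma\cap W^u(v)$; promoting them to identities of continuous tensor fields over the whole leaf requires the $C^1$ regularity of $E^u_{<1}$ and $E^u_1$ on $\widehat{\mc{R}}$ together with density of $\Sigma\cap W^u(v)\cap \widehat{\mc{R}}$ inside $W^u(v)$ for $v\in \mc{Q}$, which is where the choice of $\mc{Q}$ (as opposed to just $\mc{U}$) becomes essential.
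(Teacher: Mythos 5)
Your proposal follows essentially the same route as the paper's proof: build $\nabla^{<1}$ from the Melnick connection by passing to $E^u_{<1}$, then invoke Lemma \ref{lem:tensor} together with the bound $\lambda_1+\lambda_2>1$ valid on $\mc{P}$ to kill the flow-invariant torsion tensor on the a.e.\ bi-recurrent subset of $W^u(v)$, and conclude by density and $C^1$ regularity. Your extra step of checking $S\equiv0$ (so that $\nabla$ already preserves $E^u_{<1}$ and projection coincides with restriction) is a harmless refinement of the paper's definition $\nabla^{<1}_XY=\proj_{E^u_{<1}}\nabla_XY$; both definitions produce the same connection and the key Lyapunov argument is identical.
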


\begin{proof}
Recall that the distribution $E^{u}_{<1}$ is smooth on $W^u(v)$ for $v\in \widehat{\mc{R}}$. 

Given $X\in TW^u(v)$ and $Y\in C^1(W^u(v),E^{u}_{<1})$ we define the covariant derivative $\nabla^{<1}_X Y$ to be the section in $C^1(W^u(v),E^{u}_{<1})$ given by projection of the Melnick connection, 
\[ \nabla^{<1}_X Y:=\proj_{E^{u}_{<1}}\nabla_{X}Y.\]

Note that this operator is $\R$-bilinear in $X$ and $Y$ since projections are linear, and for $f\in C^1(W^u(v))$ since scalar functions commute with projection we have 
\beq 
\nabla^{<1}_{fX}Y&=\proj_{E^{u}_{<1}}f\nabla_{X}Y=f\proj_{E^{u}_{<1}}\nabla_{X}Y=f \nabla^{<1}_{X}Y\\
\nabla^{<1}_{X}fY&=\proj_{E^{u}_{<1}}X(f)Y+ f\nabla_{X}Y=X(f)Y +f \nabla^{<1}_{X}Y.\
\eeq
Here we have used that $\proj_{E^{u}_{<1}}Y=Y$.
Hence $\nabla^{<1}$ is $C^1(W^u(v))$-linear in $X$, and satisfies the derivation property of connections.

For $v\in \mc{P}\cap\mc{U}$, $X,Y\in C^1(W^u(v),E^{u}_{<1})$ the torsion tensor $T(X,Y)=\nabla^{<1}_X Y-\nabla^{<1}_Y X-[X,Y]$ is indeed a tensor due to the derivation property of the connection and bracket where we take the bracket of vector fields in $W^u(v)$. 

Next we show that $\nabla^{<1}$ is torsion free. Since $[X,Y]$ and $\nabla^{<1}$ are
invariant under $Dg_t$, so is $T(X,Y)$. Also, since $v\in \mc{P}$ the sum of any
two Lyapunov exponents lies in $(1,2]$. By Fubini, and absolute continuity of
the $W^u$ foliation, we may choose
$\mc{Q}\subset\mc{U}\cap\mc{P}\cap\widehat{\mc{R}}$ to be an invariant full measure
set where for each $v\in \mc{Q}$ a.e. $w\in W^u(v)$ is forward and backward
recurrent. Now we can apply Lemma \ref{lem:tensor} to almost every $w\in W^u(v)$
to obtain that $\nabla^{<1}$ is torsion free on a dense subset and hence on all
of $W^u(v)$.

\end{proof}

\begin{corollary}
The slow unstable Lyapunov  distribution $E^{u}_{<1}$ is integrable on every leaf $W^u(v)$ for $v\in \mc{Q}$.
\end{corollary}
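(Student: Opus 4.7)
The plan is to mimic the proof of Corollary \ref{cor:integ} from the strict pinching case, with the role of the generalized Kanai connection now played by the torsion-free leafwise connection $\nabla^{<1}$ constructed in the preceding lemma. A crucial observation is that Frobenius only requires involutivity of the distribution, so the absence of an asserted flatness for $\nabla^{<1}$ is not a genuine obstacle: torsion-freeness alone suffices.

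Fix $v \in \mc{Q}$. Since $\mc{Q}$ is flow-invariant and of full measure, Poincar\'{e} recurrence lets us assume $v$ is backward recurrent; combined with $v \in \widehat{\mc{R}}$, Lemma \ref{recurrent} then gives $W^u(v) \subset \widehat{\mc{R}}$. Consequently the slow unstable distribution $E^u_{<1}$ is $C^1$ (in fact smooth) along all of $W^u(v)$ by the discussion in Section \ref{sec:smooth}, so near any point we can pick $C^1$ local sections spanning it. For any two $C^1$ sections $X,Y$ of $E^u_{<1}$ on $W^u(v)$, both $\nabla^{<1}_X Y$ and $\nabla^{<1}_Y X$ lie in $E^u_{<1}$ by the very definition of $\nabla^{<1}$ as the $E^u_{<1}$-projection of the Melnick connection. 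The torsion-freeness established in the preceding lemma therefore yields
\[
[X,Y] \;=\; \nabla^{<1}_X Y - \nabla^{<1}_Y X \;\in\; E^u_{<1},
\]
so $E^u_{<1}$ is involutive along the leaf $W^u(v)$.

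Applying the $C^1$ Frobenius theorem (\cite[Theorem 1.1, Chapter 6]{Lang95}) exactly as in Corollary \ref{cor:integ} now produces an integral submanifold through each point of $W^u(v)$, establishing leafwise integrability. The main subtlety compared with the strict $\tfrac{1}{4}$-pinching case — and one that will have to be managed throughout the remainder of Section \ref{sec:non-strict} — is that the Melnick connection is defined only leafwise and only for a full-measure set of leaves, so the foliation obtained here is correspondingly only leafwise and only measurable in the transverse direction. This is adequate for the next steps, where stable holonomy on good leaves will be shown to preserve the resulting tangencies, but it is the reason the argument proceeds almost-everywhere rather than everywhere on $SM$.
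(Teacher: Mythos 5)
Your proof is correct and follows essentially the same approach as the paper: torsion-freeness of $\nabla^{<1}$ together with the fact that $\nabla^{<1}_X Y$ and $\nabla^{<1}_Y X$ lie in $E^u_{<1}$ by construction forces $[X,Y]\in E^u_{<1}$, giving involutivity and hence integrability. Your explicit remark that flatness is not needed (unlike in the proof of Corollary~\ref{cor:integ}) and your clarification that backward recurrence of $v$ guarantees $W^u(v)\subset\widehat{\mc{R}}$ are both accurate and make the argument slightly more self-contained than the paper's terser version.
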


\begin{proof}
For $v\in \mc{Q}$, and $X,Y\in C^1(W^u(v),E^{u}_{<1})$ the vanishing of the torsion tensor implies $T(X,Y)=0=\nabla^{<1}_X Y-\nabla^{<1}_Y X-[X,Y]$.
However, by definition $\nabla^{<1}_X Y$ and $\nabla^{<1}_Y X$ belong to $E^{u}_{<1}$, and therefore so does $[X,Y]$. In particular, $E^{u}_{<1}$ is integrable.
\end{proof}

The above corollary gives us well defined slow unstable foliations on almost every $W^u(v)$. Next we will show that these foliations are invariant under stable holonomy. This is substantially more difficult in the non-strict $\frac14$-pinched case since the unstable holonomy maps a priori are not known to be $C^1$.

To simplify notation, we use $Dg_{t,v}$ for the derivative of $g_t$ at $v$ restricted to $E^u(v)$. Since $M$ is strictly $\frac14+\delta$-pinched for any $\delta>0$, Corollary 1.7 of \cite{Hasselblatt94} implies the following.  
\begin{lemma}\label{lem:holder}
The foliations $W^s$ and $W^u$ are $\alpha$-\holder for all $\alpha<1$.
\end{lemma}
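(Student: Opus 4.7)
The plan is to invoke Corollary 1.7 of Hasselblatt \cite{Hasselblatt94}, after verifying that its hypotheses on bunching of the Anosov splitting are satisfied by the geodesic flow on our manifold.

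First, I would extract uniform rate information for the hyperbolic splitting $E^s\oplus E^0\oplus E^u$ of $g_t$ from the Rauch comparison theorem. Under the sectional curvature bounds $-1\le K\le -\frac14$, every unstable Jacobi field $J(t)$ satisfies $e^{t/2}\norm{J(0)}\le \norm{J(t)}\le e^t\norm{J(0)}$ for $t\ge 0$, with the analogous estimate on stable Jacobi fields. This gives pointwise-in-$v$ bunching estimates on the derivative cocycle $Dg_t$ restricted to $E^u$ and $E^s$, with bunching constants that are uniform in the base point $v$ thanks to compactness of $SM$.

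Next, I would feed these uniform bunching estimates into Hasselblatt's theorem on regularity of the Anosov splitting. His Corollary 1.7 asserts that when the geodesic flow of a closed negatively curved manifold has sufficient bunching on its hyperbolic splitting, the strong stable and unstable foliations are uniformly \holder continuous, with \holder exponent controlled by the bunching ratio. Specializing to $\frac14$-pinched curvature, the resulting exponent covers every $\alpha<1$, which is precisely our claim. The weak foliations $W^s$ and $W^u$ then inherit the same regularity, since they are obtained by flowing the strong foliations along the smooth geodesic spray direction.

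The main obstacle is that our pinching sits exactly at the boundary of the $\frac14$-pinched range, so one might worry that Hasselblatt's hypotheses degenerate at this limit. However, his argument only needs the pointwise-uniform bunching estimates supplied by Rauch comparison on compact $SM$, rather than a strict global spectral gap. Hence the \holder conclusion holds with any $\alpha<1$, even though the borderline Lipschitz regularity $\alpha=1$ need not follow in this non-strict regime; this is consistent with the statement of the lemma and is exactly the conclusion we need for the stable holonomy arguments of the next section.
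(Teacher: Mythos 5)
Your proposal matches the paper's proof, which consists of exactly the same one-step citation of Hasselblatt's Corollary~1.7 applied to the $\frac14$-pinched curvature bounds, with the bunching input supplied by Rauch comparison on the compact unit tangent bundle. One small notational correction: in this paper $W^s$ and $W^u$ denote the \emph{strong} stable and unstable foliations, so your final step passing from strong to weak foliations is unnecessary---Hasselblatt's result already applies to the strong foliations directly.
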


Now choose an $\alpha > 1 -\frac \nu 4$. As in Kalinin-Sadovskaya \cite[Section 2.2]{Kalinin13} we have local linear identifications $I_{vw}:E^u(v)\to E^u(w)$ which vary in an $\alpha$-\holder way on a neighborhood of the diagonal in $SM\times SM$. We also have that $Dg_t$ is an $\alpha$-\holder cocycle, since it the restriction of the smooth $Dg_t$ to an $\alpha$-\holder bundle. In other words, with respect to these identifications, we have 
\be\label{eq:holder}
\norm{Dg_{t,v} -I_{g_tv,g_tw}^{-1}\circ Dg_{t,w}\circ I_{v,w}}\le C(T_0) d(v,w)^\alpha
\ee
for any $T_0>0$ and all $t\le T_0$.

We aim to show convergence of $Dg_{t,w}^{-1}\circ I_{g_tv,g_tw}\circ Dg_{t,v}$ for $w\in W^s(v)$ locally and both are in some good set.

For a.e. $v\in SM$, we let $T(v)=\inf\{s>0: \frac 1 t \log \norm{Dg_{t,v}\zeta}> \frac 1 2 +\frac \nu 2 \text{ for all } t>s\text{ and for all } \zeta\in E^u(v)\}$.

The following lemma can be found in Kalinin-Sadovskaya \cite{Kalinin13} under a hypothesis of uniform bunching. We have a similar statement under a nonuniform hypothesis. This provides the morale for Lemma \ref{fast-partial-derivative}, the result we will actually use.

\begin{lemma}\label{lem:converge}
Let $v\in SM$ with all Lyapunov exponents satisfying $\frac 1 2 +\nu < \chi_i^+(v) \le 1$, and let $w\in W^s(v)$. Then the limit $H_{v,w}=\lim\limits_{t\to+\infty}Dg_{t,w}^{-1}\circ I_{g_tv,g_tw}\circ Dg_{t,v}$ converges.
\end{lemma}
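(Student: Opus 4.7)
The approach is to show that the family $A_t := Dg_{t,w}^{-1}\circ I_{g_tv,g_tw}\circ Dg_{t,v}$ of linear maps $E^u(v)\to E^u(w)$ is Cauchy as $t\to+\infty$ by establishing summability of the one-step telescoping differences $A_{t+1}-A_t$. Convergence of this sequence will yield $H_{v,w}$, and uniform continuity of $A_t$ within each integer interval $[n,n+1]$ (obtained from the same estimates with $T_0=1$) promotes the discrete Cauchy property to convergence over real times.

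First I would use the cocycle property $Dg_{t+1,u} = Dg_{1,g_tu}\circ Dg_{t,u}$ at $u=v,w$ to rewrite
\[
A_{t+1}-A_t = Dg_{t,w}^{-1}\circ \Phi_t\circ Dg_{t,v},
\]
where the one-step error
\[
\Phi_t := Dg_{1,g_tw}^{-1}\circ I_{g_{t+1}v,g_{t+1}w}\circ Dg_{1,g_tv} - I_{g_tv,g_tw}
\]
measures the failure of the identifications $I$ to conjugate the time-one cocycles at $g_tv$ and $g_tw$. Applying the \holder cocycle estimate \eqref{eq:holder} with $T_0=1$ at the base point $g_tv$, and using $d(g_tv,g_tw)\le C_0 e^{-t}$ (since $w\in W^s(v)$ and the stable foliation contracts at rate at least $e^{-t}$), one obtains $\norm{\Phi_t}\le C_1 e^{-\alpha t}$.

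The remaining task is to control the amplification $\norm{Dg_{t,w}^{-1}}\cdot\norm{Dg_{t,v}}$ on the relevant unstable subspaces. Lemma \ref{BallCon-lem} together with the bound on the derivative of unstable Jacobi fields appearing in its proof gives $\norm{Dg_{t,v}|_{E^u}}\le C_2 e^t$. For $\norm{Dg_{t,w}^{-1}|_{E^u}}$, the hypothesis $\chi_i^+(v) > \tfrac12+\nu$ does not apply at $w$ directly, but the asymptotic conjugation $Dg_{t,w}\circ I_{v,w}\approx I_{g_tv,g_tw}\circ Dg_{t,v}$, obtained by iterating \eqref{eq:holder} along the $g^k$-orbit of $v$ and summing the geometrically decreasing one-step errors $\sum_k e^{-\alpha k}$, transfers the lower Lyapunov bound from $v$ to $w$. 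Hence for $t$ large, $\norm{Dg_{t,w}^{-1}|_{E^u}}\le C_3 e^{-(\tfrac12+\tfrac\nu2)t}$. Combining the three estimates yields
\[
\norm{A_{t+1}-A_t}\le C_4\, e^{(\tfrac12-\tfrac\nu2-\alpha)t},
\]
which is summable because $\alpha > 1-\tfrac\nu4 > \tfrac12 - \tfrac\nu2$.

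The principal obstacle is transferring the lower Lyapunov bound from $v$ to $w$ uniformly in $t$, since $w$ need not lie in the Pesin block $\mc{P}$. The cleanest way around this is to work with Lyapunov (Pesin) norms adapted to $v$; in these norms the unstable cocycle has exact exponential behavior at the cost of a subexponential weight, and the surplus $\alpha - (\tfrac12-\tfrac\nu2) > \tfrac\nu4$ absorbs this additional factor. This is the nonuniform bunching analogue of the convergence argument of Kalinin--Sadovskaya in \cite{Kalinin13}, and the proof follows their strategy with the modifications just described.
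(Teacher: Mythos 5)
Your overall strategy --- telescoping the one-step differences and using the \holder cocycle estimate \eqref{eq:holder} --- is the same as the paper's, but the paper applies it to a different object, and that difference is what closes the gap you yourself flag as ``the principal obstacle.'' You work directly with $A_t = Dg_{t,w}^{-1}\circ I_{g_tv,g_tw}\circ Dg_{t,v}$, so each telescoping step is sandwiched by $\norm{Dg_{t,w}^{-1}}\cdot\norm{Dg_{t,v}}$. The second factor is bounded by $e^t$ from $K\geq -1$, but to make the first factor decay like $e^{-(1/2+\nu/2)t}$ you need the Lyapunov bound at $w$ --- which is \emph{not} part of the hypotheses. The fixes you sketch do not repair this: Lyapunov norms adapted to $v$ give a subexponential weight along the orbit of $v$, whereas the problematic quantity $Dg_{t,w}^{-1}$ lives along the orbit of $w$; and ``iterating \eqref{eq:holder} and summing'' to transfer the Lyapunov rate from $v$ to $w$ is essentially the very convergence you are trying to prove, so the argument becomes circular.

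The paper sidesteps the issue with an inversion trick: instead of $H_{vw}^t$ it proves convergence of $H_{wv}^t = (H_{vw}^t)^{-1}$. Telescoping that one produces the sandwich $\norm{Dg_{t_1,v}^{-1}}\cdot\norm{Dg_{t_1,w}}$; now the \emph{inverse} is evaluated along the orbit of $v$, where the hypothesis $\chi_i^+(v) > \tfrac12+\nu$ gives $\norm{Dg_{t_1,v}^{-1}}\leq e^{-(1/2+\nu/2)t_1}$ for $t_1 > T(v)$, while $\norm{Dg_{t_1,w}}\leq e^{t_1}$ is universal. No information about $w$ is required. This is the genuinely new idea your note is missing.

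A secondary quantitative slip: under $-1\leq K\leq -\tfrac14$ the stable contraction gives $d(g_tv,g_tw)\leq C\,d(v,w)\,e^{-t/2}$, not $e^{-t}$, so the \holder error is $e^{-\alpha t/2}$, not $e^{-\alpha t}$. The final exponent is then $\tfrac12-\tfrac\nu2-\tfrac\alpha2$, which is still negative since $\alpha>1-\tfrac\nu4$, so this does not break the argument, but your stated exponent and the stated surplus $\alpha-(\tfrac12-\tfrac\nu2)$ are not the correct bookkeeping.
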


\begin{proof}
Denote $H_{vw}^t=Dg_{t,w}^{-1}\circ I_{g_tv,g_tw}\circ Dg_{t,v}$. We show the convergence of $(H_{vw}^t)^{-1}=H_{wv}^t$ instead. 

Consider $t_1>T(v)$ such that the identifications $I_{g_{t_1}v,g_{t_1}w}$ are defined and have \holder dependence. Then for all such $t_1$ sufficiently large and $t<T_0$, we claim $\norm{H_{wv}^{t_1+t}-H_{wv}^{t_1}}$ is exponentially small in terms of $t_1$. Indeed,
$$H_{wv}^{t_1+t}-H_{wv}^{t_1}=Dg_{t_1,v}^{-1}\circ (Dg_{t,g_{t_1}v}^{-1}\circ I_{g_{t_1+t}w,g_{t_1+t}v} \circ Dg_{t,g_{t_1}w} -I_{g_{t_1}w,g_{t_1}v}) \circ Dg_{t_1,w}.$$
Thus,
$$\|H_{wv}^{t_1+t}-H_{wv}^{t_1}\|\le \| Dg_{t_1,v}^{-1}\|.\|Dg_{t_1,w}\|.\|Dg_{t,g_{t_1}v}^{-1}\circ I_{g_{t_1+t}w,g_{t_1+t}v} \circ Dg_{t,g_{t_1}w} -I_{g_{t_1}w,g_{t_1}v}\|.$$
We have $\| Dg_{t_1,v}^{-1}\norm{.}Dg_{t_1,w}\|< e^{(-\frac 1 2 -\frac \nu 2)t_1}e^{t_1}=e^{(\frac 1 2 -\frac \nu 2)t_1}$. On the other hand, by \eqref{eq:holder} applied to $v=g_{t_1}w$ and $w=g_{t_1}v$, 
\beq
\|Dg_{t,g_{t_1}v}^{-1}\circ &I_{g_{t_1+t}w,g_{t_1+t}v} \circ Dg_{t,g_{t_1}w} -I_{g_{t_1}w,g_{t_1}v}\| \\ 
&\le \| Dg_{t,g_{t_1}v}^{-1}\circ I_{g_{t_1+t}w,g_{t_1+t}v}\| \| Dg_{t,g_{t_1}w} -I^{-1}_{g_{t_1+t}w,g_{t_1+t}v}\circ Dg_{t,g_{t_1}v}\circ I_{g_{t_1}w,g_{t_1}v}\| \\
&\le C_1(T_0) C(T_0)d(g_{t_1}w, g_{t_1}v)^\alpha \le C(T_0)d(w,v)^\alpha e^{-\frac 1 2 \alpha t_1}.
\eeq
The last inequality holds because of the curvature condition and $w\in W^s(v)$. Here we have absorbed $C_1(T_0)$ into the generic constant $C(T_0)$. Combining inequalities, and since we choose $\alpha>1-\frac{\nu}4$, we get the estimate:
\begin{align*}\|H_{wv}^{t_1+t}-H_{wv}^{t_1}\|\le C(T_0)d(w,v)^\alpha e^{(\frac 1 2 -\frac \nu 2)t_1}e^{-\frac 1 2 \alpha t_1}& \le  C(T_0)d(w,v)^\alpha e^{(\frac 1 2(1-\alpha) -\frac \nu 2)t_1}\\ & \le  C(T_0)d(w,v)^\alpha e^{-\frac \nu 4 t_1}.\end{align*}
This shows the convergence of $H_{wv}^t$.
\end{proof}

Let $v,w\in \widehat{\mc{R}}$ be backward recurrent under $g_t$. It follows that $W^u(v)\subset \widehat{\mc{R}}$ and $W^u(w)\subset \widehat{\mc{R}}$. Recall that for any $\eta$ in the weak unstable manifold of $ w$, every vector in $E^u_1(\eta)$ has parallel translate making curvature -1 with $g_t\eta$ for all time $t\in \mathbb{R}$. Denote by $\proj_{E^u_1(w)}$ the orthogonal projection from $E^u(w)$ onto $E^u_1(w)$.
We define $H^{t,1}_{vw}=Dg_{t,w}^{-1}\circ \proj_{E^u_1(w)}\circ \ I_{g_tv,g_tw}\circ Dg_{t,v}$ (the superscript ``1'' indicates the fast subspace $E_1^u$ as before). 

\begin{lemma}\label{fast-partial-derivative}
We have the following
\begin{enumerate}
\item $H^{t,1}_{v,w}$ converges to a limit, denoted $H^{1}_{v,w}$ for every $v,w$ with $w\in W^s(v)$.
\item If $\xi \in E^u(v)$ with forward Lyapunov exponent $\chi(v,\xi)<1$ then $H^{1}_{vw}(\xi)=0$.
\item The operator norm $\norm{H^{1}_{vw}}$ is locally bounded as $v$ and $w$, in the same weak stable leaf, vary in a sufficiently small neighborhood in $\widehat{\mc{R}}\times\widehat{\mc{R}}$.
\end{enumerate}
\end{lemma}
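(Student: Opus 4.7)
The plan is to mirror the Cauchy argument of Lemma \ref{lem:converge}, exploiting a key commutation identity: since the splitting $E^u = E^u_1\oplus E^u_{<1}$ is $g_t$-invariant on $\widehat{\mc{R}}$, we have
\[Dg_{t,w}^{-1}\circ \proj_{E^u_1(g_tw)} = \proj_{E^u_1(w)}\circ Dg_{t,w}^{-1},\]
so that $H^{t,1}_{vw} = \proj_{E^u_1(w)}\circ H^t_{vw}$, where $H^t_{vw}:=Dg_{t,w}^{-1}\circ I_{g_tv,g_tw}\circ Dg_{t,v}$ is the unprojected cocycle. The key point is that while $H^t_{vw}$ may fail to converge when $v$ lacks a lower bound on its Lyapunov spectrum, projecting onto the fast subspace kills precisely the slow components that cause trouble, because $Dg_{t,w}^{-1}$ restricted to $E^u_1$ has \emph{exact} decay $e^{-t}$ rather than merely the Rauch bound $e^{-t/2}$.

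For part (1), write $t_2 = t_1 + t$ and use the cocycle rule to split
\[H^{t_1+t}_{vw} - H^{t_1}_{vw} = Dg_{t_1,w}^{-1}\circ R_{t,t_1}\circ Dg_{t_1,v},\]
where by the H\"older estimate \eqref{eq:holder}, stable contraction $d(g_{t_1}v,g_{t_1}w)\le e^{-t_1/2}d(v,w)$, and the Rauch bound $\|Dg_{t,\cdot}^{-1}|_{E^u}\|\le e^{-t/2}$ (from $K\le -\tfrac14$), the remainder satisfies $\|R_{t,t_1}\|\le C(T_0)\,e^{-t/2-\alpha t_1/2}d(v,w)^\alpha$ for $t\le T_0$. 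Applying the projection and using the exact decay $\|Dg_{t_1,w}^{-1}|_{E^u_1}\|=e^{-t_1}$ together with $\|Dg_{t_1,v}|_{E^u}\|\le e^{t_1}$ (Rauch, $K\ge -1$), the $e^{\pm t_1}$ factors cancel, giving
\[\|H^{t_1+t,1}_{vw}-H^{t_1,1}_{vw}\|\le P(t_1)\,C(T_0)\,e^{-t/2-\alpha t_1/2}d(v,w)^\alpha,\]
where $P(t_1):=\|\proj_{E^u_1(g_{t_1}w)}\|$. Summing geometrically over dyadic increments in $t_1$ then yields the Cauchy property, provided $P(t_1)$ grows subexponentially at rate below $\alpha/2$. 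This is guaranteed by continuity of the splitting on $\widehat{\mc{R}}$ together with flow-invariance of $\widehat{\mc{R}}$ from Lemma \ref{open-dense-R}.

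For part (2), set $\eta_t:=I_{g_tv,g_tw}(Dg_{t,v}(\xi))$ and let $\eta_t^1$ denote its $E^u_1(g_tw)$-component, so that $H^{t,1}_{vw}(\xi)=Dg_{t,w}^{-1}(\eta_t^1)$ and $\|H^{t,1}_{vw}(\xi)\|=e^{-t}\|\eta_t^1\|$. Since $\chi(v,\xi)=\lambda<1$, for any small $\epsilon>0$ we have $\|Dg_{t,v}(\xi)\|\le C_\epsilon e^{(\lambda+\epsilon)t}\|\xi\|$ for all large $t$, hence $\|\eta_t^1\|\le P(t)\cdot C\|Dg_{t,v}(\xi)\|$. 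Combining, $\|H^{t,1}_{vw}(\xi)\|\le P(t)\,C_\epsilon\,e^{(\lambda+\epsilon-1)t}\|\xi\|\to 0$ as $t\to\infty$, giving $H^1_{vw}(\xi)=0$. For part (3), telescoping the Cauchy bound from part (1) yields
\[\|H^1_{vw}\|\le \|H^{t_0,1}_{vw}\| + C\,d(v,w)^\alpha\]
for any fixed $t_0$ large enough that the geometric tail sums. The first term depends continuously on $(v,w)$ (for fixed $t_0$, $H^{t_0,1}_{vw}$ is a composition of continuous ingredients on $\widehat{\mc{R}}\times\widehat{\mc{R}}$), hence locally bounded near a base pair; the second term is trivially local. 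The principal technical obstacle throughout is ensuring that $P(t)$ does not blow up faster than $e^{\alpha t/2}$ along the forward orbit of $w$; this is resolved by restricting to a neighborhood in which the angle between $E^u_1$ and $E^u_{<1}$ is uniformly bounded below along orbits, exploiting flow-invariance of $\widehat{\mc{R}}$ together with forward recurrence.
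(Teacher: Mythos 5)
Your core mechanism is the same as the paper's: the crucial observation that $Dg_{t,w}^{-1}$ has \emph{exact} decay $e^{-t}$ on the flow-invariant fast subspace $E^u_1$ (rather than just the Rauch bound $e^{-t/2}$), which exactly cancels the Rauch expansion $e^{t_1}$ on the right while leaving the \holder decay of the stable distance to do the work. Parts (2) and (3) also follow the same lines. However, your framing introduces a gap that the paper's formulation sidesteps.

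You justify the commutation $Dg_{t,w}^{-1}\circ \proj_{E^u_1(g_tw)} = \proj_{E^u_1(w)}\circ Dg_{t,w}^{-1}$ by invoking flow-invariance of the splitting $E^u=E^u_1\oplus E^u_{<1}$. This is not sufficient: $\proj_{E^u_1}$ in the paper is the \emph{orthogonal} projection in the Sasaki metric, and the commutation requires the Riemannian orthocomplement $(E^u_1)^\perp\cap E^u$ to be flow-invariant. That this orthocomplement coincides with $E^u_{<1}$ is a nontrivial fact that ultimately rests on Lemma \ref{ortho-invar} together with the symmetry of the second fundamental form of horospheres; you implicitly conflate ``orthogonal to $E^u_1$'' with ``lying in $E^u_{<1}$'' without establishing this. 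In fact the commutation \emph{is} essential to your Cauchy estimate -- without it, $\|\proj_{E^u_1(w)}\circ Dg_{t_1,w}^{-1}\|\le e^{-t_1/2}$ only, and the resulting bound $e^{(1/2-\alpha/2)t_1}d(v,w)^\alpha$ diverges. The paper avoids this issue entirely by keeping the projection at time $t_1$ (and $t_1+t$): the operator inside the brackets then manifestly maps into the invariant space $E^u_1(g_{t_1}w)$, so that $Dg_{t_1,w}^{-1}$ acts with norm exactly $e^{-t_1}$ with no commutation needed.

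Two further remarks. Your ``principal technical obstacle'' of controlling $P(t_1)=\|\proj_{E^u_1(g_{t_1}w)}\|$ is a red herring: the orthogonal projection always has norm $1$, so there is nothing to control. (Your worry would be real if $\proj$ were the oblique projection along $E^u_{<1}$, whose norm depends on the angle; but the paper uses the orthogonal one precisely to avoid this.) Finally, for part (3) the statement lets $v,w$ range over a weak stable leaf, not just a strong one; your telescoping argument as written only treats $w\in W^s(v)$, whereas the paper explicitly introduces a small time shift $t_{\eta_2}$ and bounds its contribution by $e^\delta$.
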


\begin{proof}
For (1), we follow the mode of proof of Lemma \ref{lem:converge}. Let $\xi \in E^u(v)$. For $t_1$ large and $t<T_0$ consider
\begin{align*}
&(H_{vw}^{t_1+t,1}-H_{vw}^{t_1,1})(\xi)=\\ & Dg_{t_1,w}^{-1}\circ (Dg_{t,g_{t_1}w}^{-1}\circ \proj_{E^u_1(g_{t_1+t}w)}\circ I_{g_{t_1+t}v,g_{t_1+t}w} \circ Dg_{t,g_{t_1}v} -\proj_{E^u_1(g_{t_1}w)}\circ I_{g_{t_1}v,g_{t_1}w}) \circ Dg_{t_1,v}(\xi).
\end{align*}
Since 
\[(Dg_{t,g_{t_1}w}^{-1}\circ \proj_{E^u_1(g_{t_1+t}w)}\circ I_{g_{t_1+t}v,g_{t_1+t}w} \circ Dg_{t,g_{t_1}v} -\proj_{E^u_1(g_{t_1}w)}\circ I_{g_{t_1}v,g_{t_1}w}) \circ Dg_{t_1,v}(\xi) \in E^u_1(g_{t_1}v),\] 
we have that 
\begin{align*} &\|(H_{vw}^{t_1+t,1}-H_{vw}^{t_1,1})(\xi)\| \\ &= e^{-t_1}\| (Dg_{t,g_{t_1}w}^{-1}\circ \proj_{E^u_1(g_{t_1+t}w)}\circ I_{g_{t_1+t}v,g_{t_1+t}w} \circ Dg_{t,g_{t_1}v} -\proj_{E^u_1(g_{t_1}w)}\circ I_{g_{t_1}v,g_{t_1}w}) \circ Dg_{t_1,v}(\xi)\|\\
& \le e^{-t_1}\|(Dg_{t,g_{t_1}w}^{-1}\circ \proj_{E^u_1(g_{t_1+t}w)}\circ I_{g_{t_1+t}v,g_{t_1+t}w} \circ Dg_{t,g_{t_1}v} -\proj_{E^u_1(g_{t_1}w)}\circ I_{g_{t_1}v,g_{t_1}w})\|.\| Dg_{t_1,v}(\xi)\|\\
& \le \|\xi\|.\|(Dg_{t,g_{t_1}w}^{-1}\circ \proj_{E^u_1(g_{t_1+t}w)}\circ I_{g_{t_1+t}v,g_{t_1+t}w} \circ Dg_{t,g_{t_1}v} -\proj_{E^u_1(g_{t_1}w)}\circ I_{g_{t_1}v,g_{t_1}w})\|.\end{align*}
Since projection and $I_{v,w}$ are as regular as the underlying vector bundle, by Lemma \ref{lem:holder} we have a \holder estimate 
\begin{align*}&\|(Dg_{t,g_{t_1}w}^{-1}\circ \proj_{E^u_1(g_{t_1+t}w)}\circ I_{g_{t_1+t}v,g_{t_1+t}w} \circ Dg_{t,g_{t_1}v} -\proj_{E^u_1(g_{t_1}w)}\circ I_{g_{t_1}v,g_{t_1}w})\|\\& \le C(T_0)d(g_{t_1}w, g_{t_1}v)^\alpha\le C(T_0)d(w,v)^\alpha e^{-\frac 1 2 \alpha t_1}.\end{align*}
It follows that we get the convergence.

Next for (2), assuming $\chi(v,\xi)=1-\delta$ for some $\delta>0$ we have
\begin{align*}\|H^{t,1}_{v,w}(\xi)\| & = e^{-t}.\|\proj_{E^u_1(g_tw)}\circ I_{g_tv,g_tw}\circ Dg_{t,v}(\xi)\|\\
& \le e^{-t}e^{(1-\frac{\delta}{2})t}.\|\proj_{E^u_1(g_tw)}\circ I_{g_tv,g_tw}\| .\|\xi\|\le Ce^{-\frac{\delta t}{2}}\|\xi\|,\end{align*}
where $C$ is a constant that bounds norms of identifications between close enough points. It follows that $H^{1}_{v,w}(\xi)=0$.

Finally for (3), let $\eta_1\in W^u(v)$ and $\eta_2\in W^u(w)$ be vectors in a small neighborhood of $v$ and $w$ such that $\eta_2$ is in the weak stable manifold of $\eta_1$. For some sufficiently small neighborhood of $v$ and $w$, there is small time $t_{\eta_2} \in (-\delta,\delta)$ for some $\delta >0$ such that $g_{t_{\eta_2}}\eta_2 \in W^s(\eta_1)$. If $\xi\in E^u(\eta_1)$ has forward Lyapunov exponent 1 then
\beq
\|H^{t,1}_{\eta_1,\eta_2}(\xi)\|&=\|Dg_{{t+t_{\eta_2}},\eta_2}^{-1}\circ \proj_{E^u_1(\eta_2)}\circ \ I_{g_t \eta_1,g_{t+t_{\eta_2}}\eta_2}\circ Dg_{t,\eta_1}(\xi)\| \\
&\le e^\delta\|\proj_{E^u_1(\eta_2)}\circ \ I_{g_t \eta_1,g_{t+t_{\eta_2}}\eta_2}\|\norm{\xi}\le Ce^\delta\norm{\xi},
\eeq
for $t$ large enough. The claim then follows.
\end{proof}

We define a map $\mc{I}_{vw}:=\exp_w|_{E^u(w)}\circ I_{vw}\circ (\exp_v|_{E^u(v)})^{-1}$ from a neighborhood $V$ of $v$ in $W^u(v)$ into a neighborhood $W$ of $w$ in $W^u(w)$, where $\exp_w|_{E^u(v)}$ and $\exp_w|_{E^u(w)}$ denote exponential maps into neighborhoods of $v$ in $W^u(v)$ and $w$ in $W^u(w)$. 

Let $h^t_{vw}=g_{-t}\circ \mc{I}_{vw} \circ g_t: V\to W$. The maps $h^t_{vw}$ converges to $h_{vw}$ locally at $v$ as $t\to +\infty$. The map $h^t_{vw}$ is smooth and $Dh^t_{vw}=Dg_{t,w}^{-1}\circ I_{g_tv,g_tw}\circ Dg_{t,v}$.

\begin{lemma}\label{lem:holonomy2}
Let $v,w\in \mc{Q}$ be backward recurrent under $g_t$. Assume further that $v$ and $w$ are chosen that almost every vector of $W^u(v)$ and $W^u(w)$ are in $\mc{R}$ and forward recurrent under $g_t$. Then the holonomy $h_{v,w}$ maps slow unstable leaves in $W^u(v)$ to slow unstable leaves in $W^u(w)$. 
\end{lemma}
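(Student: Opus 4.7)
The strategy mirrors the Feres--Katok argument in the strict pinched case (Lemma \ref{holoNomy}), where one uses $C^1$ regularity of $h_{v,w}$ to push Lyapunov exponents through $Dh_{v,w}$. Here $h_{v,w}$ is only H\"older, so the argument must be lifted from the tangent level to the leaf level. Throughout, let $\mc{W}^u_{<1}(v)\subset W^u(v)$ denote the Frobenius integral leaf of $E^u_{<1}$ through $v\in\mc{Q}$, which exists by the previous corollary.

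Step 1. The first and central step is a Lyapunov-growth characterization: for $v\in\mc{Q}$ and $v'$ in a small neighborhood of $v$ in $W^u(v)\cap\mc{P}$,
\[
v'\in \mc{W}^u_{<1}(v)\ \Longleftrightarrow\ \limsup_{t\to+\infty}\tfrac{1}{t}\log d(g_tv,g_tv')<1.
\]
The forward implication follows from the fact that tangent vectors to the leaf lie in $E^u_{<1}$, whose forward Lyapunov exponents at points of $\mc{P}$ are at most $1-\nu$; integrating along a smooth path in the leaf gives subexponential growth of the distance. The converse is where Lemma \ref{fast-partial-derivative} enters: if $\exp_v^{-1}(v')$ had a nonzero $E^u_1$-component $\xi$, then the local uniform boundedness of $H^1$ on the fast subspace (part (3)), combined with its vanishing on slow vectors (part (2)), forces a $\sim e^t$ lower bound on $d(g_tv,g_tv')$, contradicting the hypothesis.

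Step 2. Granted Step 1, the lemma follows from a nonuniform triangle inequality. By absolute continuity of $W^s$-holonomy on Pesin sets together with the hypotheses on $v$ and $w$, a full leaf-measure set of $v'\in \mc{W}^u_{<1}(v)\cap\mc{P}$ is forward recurrent under $g_t$ and has image $w':=h_{v,w}(v')\in \mc{P}\cap\widehat{\mc{R}}$ also forward recurrent. For any such $v'$,
\[
d(g_tw,g_tw')\le d(g_tw,g_tv)+d(g_tv,g_tv')+d(g_tv',g_tw').
\]
The outer terms are bounded in forward time because $w\in W^s(v)$ and $w'$ lies on the weak stable leaf of $v'$, while the middle term has upper exponential growth rate strictly less than $1$ by Step 1. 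Applying Step 1 at the base point $w\in\mc{Q}$ yields $w'\in \mc{W}^u_{<1}(w)$. Since the set of admissible $v'$ is dense in $\mc{W}^u_{<1}(v)$, $h_{v,w}$ is continuous, and $\mc{W}^u_{<1}(w)$ is a locally closed embedded $C^1$-submanifold of the correct dimension in $W^u(w)$, we conclude $h_{v,w}\!\left(\mc{W}^u_{<1}(v)\right)\subseteq \mc{W}^u_{<1}(w)$. The reverse inclusion is obtained by exchanging $v$ and $w$ and using $h_{w,v}=h_{v,w}^{-1}$.

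The main obstacle is Step 1: reconciling the smooth Frobenius leaf of $E^u_{<1}$ with the Pesin--Lyapunov slow manifold in the absence of $C^1$ holonomy or of globally uniform Pesin blocks. This is precisely what the construction of $H^1_{v,w}$ as a limit of the fast projection of the linearized holonomy is designed to achieve: the pointwise vanishing on slow vectors and uniform boundedness on fast vectors provided by Lemma \ref{fast-partial-derivative} serve as the substitute for $C^1$ regularity of $h_{v,w}$ in the non-strictly $\frac14$-pinched setting.
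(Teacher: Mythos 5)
Your proof takes a genuinely different route from the paper (which shows the "fast component" map $f = p_{\mc V}\circ h_{v,w}$ on plaques has derivative $H^1_{v,w}$, vanishing a.e.\ and locally bounded, hence everywhere zero by a Sobolev/mollification argument). But there is a gap in Step~1 that undermines the whole argument.

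The central unjustified claim is the equivalence
\[
v'\in \mc{W}^u_{<1}(v)\ \Longleftrightarrow\ \limsup_{t\to+\infty}\tfrac{1}{t}\log d(g_tv,g_tv')<1,
\]
i.e., that the Frobenius integral leaf of the smooth extended distribution $E^u_{<1}$ coincides with the Pesin slow unstable manifold at $v$. Nothing in the paper establishes this, and the paper's argument is carefully designed to avoid having to prove it. For the forward implication you integrate along a path in the leaf, but the estimate $\|Dg_t\xi\|\le C\,e^{\lambda t}\|\xi\|$ with $\lambda<1$ for $\xi\in E^u_{<1}(\eta)$ holds only at $\eta$ in a Lyapunov-regular set, with a constant $C$ that is non-uniform; the intermediate points of the path need not be Lyapunov regular, let alone lie in a common Pesin block, so the path-integral bound does not follow from a mere exponent count. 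For the reverse implication the cited mechanism is a non sequitur: Lemma~\ref{fast-partial-derivative} concerns the operator $H^1_{v,w}$, which is the limit of the $E^u_1$-projected derivatives of the stable holonomy between two different unstable leaves. It says nothing about the forward growth of $d(g_tv,g_tv')$ for $v'$ lying in the same unstable leaf as $v$. Parts (2) and (3) of that lemma cannot force an $e^t$ lower bound for in-leaf displacements.

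A secondary issue: you need $\mc{W}^u_{<1}(v)\cap\mc{P}$ to have full induced measure in $\mc{W}^u_{<1}(v)$, but a priori $\mc{P}$, being merely full measure in $SM$, could miss the positive-codimension submanifold $\mc{W}^u_{<1}(v)$ entirely. The paper handles exactly this point by proving the statement first for plaques $\mc{H}$ with a.e.\ recurrent vectors, then observing that such leaves are generic in the $C^1$ slow foliation by Fubini, and passing to all leaves by continuity. In your triangle-inequality scheme the same Fubini/continuity fix would be needed, but the issue is not addressed beyond appealing to "absolute continuity of $W^s$-holonomy," which does not give fullness of $\mc{P}$ inside a fixed slow leaf. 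Because Step~1 fails as written, Step~2's triangle inequality (which is otherwise sound, and a nice idea) has nothing to bootstrap off.
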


\begin{proof}
The image of the $C^1$ slow unstable foliation in $W^u(v)$ under $h_{v,w}$ is a $C^0$ foliation in $W^u(w)$. We will first show that $h_{v,w}$ maps slow unstable leaves to locally. 

Choose foliation charts for the slow unstable foliations around $v$ and $w$. Let $\mc{H}$ be the connected component containing $v$ of the intersection with the slow unstable leaf through $v$ with the chart, i.e. the plaque of $v$. Similarly, let $\mc{V}$ be the plaque of the fast unstable leaf containing $w$. Assume first that almost every vector in $\mc{H}$ is forward recurrent. After shrinking $\mc{H}$ if necessary, let $f:\mc{H}\to \mc{V}$ be defined by choosing $f(\eta)\in\mc{V}$ to be the intersection of the slow unstable leaf containing $h_{v,w}(\eta)$ and $\mc{V}$. We show that $f$ is differentiable on $\mc{H}$, and its derivative is exactly $H^1_{v,w}$. Indeed, since projection to $\mc{V}$ along the slow unstable leaf, denoted by $p_{\mc{V}}$, commutes with $g_t$ we have that 
\[
p_{\mc{V}}\circ h^t_{vw}|_{\mc{H}} = p_{\mc{V}}\circ g_{-t}\circ \mc{I}_{vw} \circ g_t |_{\mc{H}}=g_{-t}\circ p_{\mc{V}} \circ \mc{I}_{vw} \circ g_t |_{\mc{H}}
\]
converges uniformly to $f$ in a neighborhood of $v$. And since $D(p_{\mc{V}}\circ h^t_{vw}) = H^{t,1}_{vw}$ converges, we have that $Df = H^1_{vw}$.

If $\eta\in\mc{H}$ is forward recurrent then for any slow Lyapunov vector $\xi \in E^u_{<1}(\eta)$ we have that $\chi(\eta,\xi)<1$. By Lemma \ref{fast-partial-derivative}, $Df(\eta) =0$. Since almost every vector in $\mc{H}$ is recurrent, $Df =0$ almost everywhere. Moreover, $Df$ is locally bounded, also by Lemma \ref{fast-partial-derivative}. Thus $Df$ equals the zero map in the sense of distributions, and similarly the same holds for all of its higher derivatives. 
Hence, by the Sobolev embedding theorem, $f$ is smooth and $Df=0$ everywhere. It follows that $f$ is constant, i.e. the leaf $\mc{H}$ locally maps to one leaf.

By connectedness of the leaves, $h_{v,w}$ preserves the entire leaf. Now consider the collection of slow unstable leaves with almost every vector being recurrent. By the same argument, such leaves map to slow unstable leaves. Since the foliation by slow unstable leaves is $C^1$ in $W^u(v)$, such leaves are generic by Fubini. Hence every leaf maps to a leaf by continuity.
\end{proof}

\begin{corollary}
The slow unstable distributions are  trivial.
\end{corollary}

\begin{proof}
For any $v,w\in\mc{Q}$ with $w\not\in \cup_t g^tW^u(v)$, the projections of the slow unstable foliations on $W^u(v)$ and $W^u(w)$ to $\partial \til{M}$ agree off of the backward endpoints of the geodesics through $v$ and $w$ in $\partial \til{M}$ by Theorem \ref{lem:holonomy2}.   

Hence we obtain a common $C^0$ foliation of $\partial \til{M}$. Moreover, this foliation is invariant under $\pi_1 M$ since the $E_{<1}^{u}$ distributions are $\pi_1(M)$ invariant. Again by Foulon \cite[{{\em  Corollaire}\rm }]{Foulon1994} this foliation is trivial.
\end{proof}

   The last step in the proof of our main result is now essentially the same as in the strict pinching case.  
\vspace{.6em}

\begin{proof}[Proof of Theorem \ref{thm:main}] (non-strict $\frac14$-pinching case):  Since the slow unstable distribution is trivial, all unstable Jacobi fields belong to $E^u_1$.  Hence  all sectional curvatures are $-1$ on $\mc{Q}$.  Since $\mc{Q}$ is dense in $SM$,  it follows that all sectional curvatures are $-1$.   \end{proof}

\begin{proof}[Proof of Corollary \ref{cor:measures}]
Since $\mu$ is ergodic and invariant, the set of vectors that are recurrent with positive frequency has full measure. Since $\mu$ has full support this set is therefore dense. By Lemma \ref{maxLyap}, the unstable Lyapunov space of exponent $1$ for the geodesic through $v$ coincides with $\mc{E}(v)$ everywhere on this set. In particular $\mc{E}(v)$ has positive dimension everywhere, and the hyperbolic rank of $M$ is positive. 
\end{proof}

\begin{thebibliography}{Ham91b}
	
	\bibitem[Bal85]{Ballmann}
	W.~Ballmann, \emph{Nonpositively curved manifolds of higher rank}, Ann. of
	Math. (2) \textbf{122} (1985), no.~3, 597--609.
	
	\bibitem[Bal95]{BallmannBook}
	\bysame, \emph{Lectures on spaces of nonpositive curvature}, DMV Seminar,
	vol.~25, Birkh{\"a}user Verlag, Basel, 1995, With an appendix by Misha Brin.
	
	\bibitem[BBE85]{Ballmann85}
	W.~Ballmann, M.~Brin, and P.~Eberlein, \emph{Structure of manifolds of
		nonpositive curvature. {I}}, Ann. of Math. (2) \textbf{122} (1985), no.~1,
	171--203.
	
	\bibitem[BFL90]{BFL90}
	Y.~Benoist, P.~Foulon, and F.~Labourie, \emph{Flots d'{A}nosov {\`a}
		distributions de {L}iapounov diff{\'e}rentiables. {I}}, Ann. Inst. H.
	Poincar{\'e} Phys. Th{\'e}or. \textbf{53} (1990), no.~4, 395--412, Hyperbolic
	behaviour of dynamical systems (Paris, 1990).
	
	\bibitem[Bri82]{Brin82}
	M.~Brin, \emph{Ergodic theory of frame flows}, Ergodic theory and dynamical
	systems, {II} ({C}ollege {P}ark, {M}d., 1979/1980), Progr. Math., vol.~21,
	Birkh{\"a}user, Boston, Mass., 1982, pp.~163--183.
	
	\bibitem[BS87]{BS}
	K.~Burns and R.~Spatzier, \emph{Manifolds of nonpositive curvature and their
		buildings}, Inst. Hautes {\'E}tudes Sci. Publ. Math. (1987), no.~65, 35--59.
	
	\bibitem[BS16]{Bettiol16}
	R.~G. Bettiol and B.~Schmidt, \emph{Three-manifolds with many flat planes},
	ArXiv e-prints (2016), To appear in Transactions of the American Mathematical
	Society.
	
	\bibitem[But15]{Butler15}
	C.~Butler, \emph{Rigidity of equality of lyapunov exponents for geodesic
		flows}, ArXiv e-prints (2015).
	
	\bibitem[Con02]{Connell2}
	C.~Connell, \emph{A characterization of homogeneous spaces with positive
		hyperbolic rank}, Geom. Dedicata \textbf{93} (2002), 205--233.
	
	\bibitem[Con03]{Connell03}
	C.~Connell, \emph{Minimal {L}yapunov exponents, quasiconformal structures, and
		rigidity of non-positively curved manifolds}, Ergodic Theory Dynam. Systems
	\textbf{23} (2003), no.~2, 429--446.
	
	\bibitem[Con08]{Constantine-rank}
	D.~Constantine, \emph{2-frame flow dynamics and hyperbolic rank-rigidity in
		nonpositive curvature}, J. Mod. Dyn. \textbf{2} (2008), no.~4, 719--740.
	
	\bibitem[EH90]{EH}
	P.~Eberlein and J.~Heber, \emph{A differential geometric characterization of
		symmetric spaces of higher rank}, Inst. Hautes {\'E}tudes Sci. Publ. Math.
	(1990), no.~71, 33--44.
	
	\bibitem[FK90]{FeresKatok1990}
	R.~Feres and A.~Katok, \emph{Anosov flows with smooth foliations and rigidity
		of geodesic flows on three-dimensional manifolds of negative curvature},
	Ergodic Theory Dynam. Systems \textbf{10} (1990), no.~4, 657--670.
	
	\bibitem[Fou94]{Foulon1994}
	P.~Foulon, \emph{Feuilletages des sph{\`e}res et dynamiques {N}ord-{S}ud}, C.
	R. Acad. Sci. Paris S{\'e}r. I Math. \textbf{318} (1994), no.~11, 1041--1042.
	
	\bibitem[GHL04]{Gallot-Hulin-Lafontaine}
	S.~Gallot, D.~Hulin, and J.~Lafontaine, \emph{Riemannian geometry}, third ed.,
	Universitext, Springer-Verlag, Berlin, 2004.
	
	\bibitem[Ham91a]{Hamenstadt91a}
	U.~Hamenst{{\"a}}dt, \emph{Compact manifolds with 1/4-pinched negative
		curvature.}, Global Differential Geometry and Global Analysis. Lecture Notes
	in Mathematics, vol 1481 (D.~Ferus, U.~Pinkall, U.~Simon, and B.~Wegner,
	eds.), Springer, 1991.
	
	\bibitem[Ham91b]{Hamenstadt91}
	\bysame, \emph{A geometric characterization of negatively curved locally
		symmetric spaces}, J. Differential Geom. \textbf{34} (1991), no.~1, 193--221.
	
	\bibitem[Has94]{Hasselblatt94}
	B.~Hasselblatt, \emph{Regularity of the anosov splitting and of horospheric
		foliations}, Ergodic Theory and Dynamical Systems \textbf{14} (1994), no.~4,
	645--666.
	
	\bibitem[HP75]{Hirsch75}
	M.~Hirsch and C.~Pugh, \emph{Smoothness of horocycle foliations}, J.
	Differential Geometry \textbf{10} (1975), 225--238.
	
	\bibitem[Kan88]{kanai88}
	M.~Kanai, \emph{Geodesic flows of negatively curved manifolds with smooth
		stable and unstable foliations}, Ergodic Theory Dynam. Systems \textbf{8}
	(1988), no.~2, 215--239.
	
	\bibitem[KS13]{Kalinin13}
	B.~Kalinin and V.~Sadovskaya, \emph{Cocycles with one exponent over partially
		hyperbolic systems}, Geom. Dedicata \textbf{167} (2013), 167--188.
	
	\bibitem[Lan95]{Lang95}
	S.~Lang, \emph{Differential and {R}iemannian manifolds}, Graduate Texts in
	Mathematics, Springer, 1995.
	
	\bibitem[LS16]{Lin16}
	S.~{Lin} and B.~{Schmidt}, \emph{{Manifolds with many hyperbolic planes}},
	ArXiv e-prints (2016).
	
	\bibitem[Mel16]{Melnick16}
	K.~Melnick, \emph{Nonstationary smooth geometric structures for contracting
		measurable cocycles}, ArXiv e-prints (2016), To appear in Ergodic Theory and
	Dynamical Systems.
	
	\bibitem[SS90]{SpatzStrake}
	R.~J. Spatzier and M.~Strake, \emph{Some examples of higher rank manifolds of
		nonnegative curvature}, Comment. Math. Helv. \textbf{65} (1990), no.~2,
	299--317.
	
	\bibitem[SSS16]{SchShSP2016}
	B.~Schmidt, K.~Shankar, and R.~Spatzier, \emph{Positively curved manifolds with
		large spherical rank}, Comment. Math. Helv. \textbf{91} (2016), no.~2,
	219--251.
	
	\bibitem[SSW05]{SSW}
	K.~Shankar, R.~Spatzier, and B.~Wilking, \emph{Spherical rank rigidity and
		{B}laschke manifolds}, Duke Math. J. \textbf{128} (2005), no.~1, 65--81.
	
	\bibitem[Wat13]{Watkins}
	J.~Watkins, \emph{The higher rank rigidity theorem for manifolds with no focal
		points}, Geom. Dedicata \textbf{164} (2013), 319--349.
	
\end{thebibliography}

\providecommand{\bysame}{\leavevmode\hbox to3em{\hrulefill}\thinspace}
\providecommand{\MR}{\relax\ifhmode\unskip\space\fi MR }
\providecommand{\MRhref}[2]{%
	\href{http://www.ams.org/mathscinet-getitem?mr=#1}{#2}
}
\providecommand{\href}[2]{#2}

\end{document}